\documentclass{article}

\usepackage{PRIMEarxiv}

\usepackage[utf8]{inputenc} %
\usepackage[T1]{fontenc}    %
\usepackage{hyperref}       %
\usepackage{url}            %
\usepackage{natbib}
\usepackage{booktabs}       %
\usepackage{amsfonts}       %
\usepackage{nicefrac}       %
\usepackage{microtype}      %
\usepackage{lipsum}
\usepackage{fancyhdr}       %
\usepackage{graphicx}       %
\graphicspath{{media/}}     %
\usepackage{nicefrac}
\usepackage{mathtools}
\usepackage{mathrsfs}
\usepackage{amsmath, amssymb, amsopn, amsthm} 
\usepackage[dvipsnames]{xcolor}
\usepackage{svg}
\usepackage{epstopdf}
\definecolor{mpl_green_C2}{HTML}{2ca02c} %
\usepackage[capitalize,noabbrev,nameinlink]{cleveref}
\usepackage{graphicx} %
\usepackage{caption}  %
\usepackage{subcaption} %
\usepackage{wrapfig}
\usepackage{enumitem}

\usepackage[toc,page,header]{appendix}
\usepackage{titletoc}

\numberwithin{equation}{section}
\newtheoremstyle{exampstyle}
{4pt} %
{4pt} %
{\itshape} %
{} %
{\bfseries} %
{.} %
{.5em} %
{} %
\theoremstyle{exampstyle}

\newcommand{\be}{\begin{equation}}
\newcommand{\ee}{\end{equation}}
\newcommand{\bes}{\begin{equation*}}
\newcommand{\ees}{\end{equation*}}

\newcommand{\Z}{\mathbb{Z}}

\newcommand{\R}{\mathbb{R}}

\newcommand{\mc}[1]{\mathcal{#1}}

\newtheorem*{theorem*}{Theorem}
\newtheorem{theorem}{Theorem}[section]
\newtheorem{proposition}[theorem]{Proposition}
\newtheorem{lemma}[theorem]{Lemma}
\newtheorem{corollary}[theorem]{Corollary}
\newtheorem*{corollary*}{Corollary}
\theoremstyle{definition}
\newtheorem{definition}[theorem]{Definition}
\newtheorem{assumption}[theorem]{Assumption}
\newtheorem{nb}[theorem]{Note}
\newtheorem{example}[theorem]{Example}

\theoremstyle{remark}

\theoremstyle{definition}

\DeclareMathOperator{\Orb}{\operatorname{Traj}}

\newcommand{\TpM}{T_p\mathcal{M}}
\newcommand{\Tp}[1]{T_p{#1}}

\newcommand{\Rl}{\mathbb{R}}

\DeclareMathOperator{\im}{Im}

\pagestyle{fancy}
\thispagestyle{empty}
\rhead{ \textit{ }} 

\fancyhead[LO]{Discovery Requires Chaos}

\usepackage{authblk}

\makeatletter
\renewcommand\AB@affilsepx{, \protect\Affilfont}
\makeatother

\title{

When is a System Discoverable from Data? \\
Discovery Requires Chaos

}

\author{Zakhar Shumaylov\textsuperscript{1,}\thanks{Equal contribution.}\;,
  Peter Zaika\textsuperscript{1,}\protect\footnotemark[1]\;,
  Philipp Scholl\textsuperscript{3}, \authorcr
  Gitta Kutyniok\textsuperscript{3},
  Lior Horesh\textsuperscript{2},
  Carola-Bibiane Sch\"onlieb\textsuperscript{1}}
\affil{\textsuperscript{1}University of Cambridge \quad
  \textsuperscript{2}IBM Research  \\
  \textsuperscript{3}LMU Munich and Munich Center for Machine Learning (MCML)}
\begin{document}
\maketitle

\begin{abstract}
The deep learning revolution has spurred an incredible rise in advances of using AI in sciences. Within physical sciences the largest focus has been on discovery of dynamical systems from observational data, yet the reliability of learned surrogates and symbolically discovered models is often undermined by the fundamental problem of non-uniqueness. The resulting models may fit the available data perfectly but
lack genuine predictive power. This raises a critical question for the field: under what conditions can the systems governing equations be uniquely identified from a finite set of observations? 
Existing work in this area has primarily focused on the simplistic and restrictive settings of linear systems.
This study instead asks when a system is \emph{discoverable}, i.e. unique in the space of either \emph{continuous} or \emph{analytic} functions. 
Our analysis shows, counter-intuitively, that \emph{chaos}, a property typically associated with unpredictability, is a crucial ingredient for ensuring a system is discoverable. The prevalence of chaotic systems in benchmark datasets may have inadvertently obscured this fundamental limitation.
More concretely, we show that a dynamical system exhibiting chaos across its entire domain is discoverable from a single trajectory within the space of continuous functions. Conversely, we show that continuous discoverability from a finite number of trajectories implies a Smale-like decomposition with each cell exhibiting topological transitivity, a defining feature of chaotic dynamics. For the more common case of systems chaotic on a strange attractor, we show that analytic discoverability is achievable under a geometric condition on the attractor, and as a direct consequence, for the first time we demonstrate that \emph{the classical Lorenz system is analytically discoverable}. {We verify experimentally that both model uncertainty and out of domain error both go to zero precisely at chaos onset.}
Moreover, we establish that analytic discoverability is impossible in the presence of first integrals, a property common to real-world dynamics.
These findings help explain the success of data-driven discovery in inherently chaotic domains like weather forecasting, while simultaneously revealing a significant challenge for engineering applications such as digital twins, where stable, predictable behavior is desired.
The very systems that are most amenable to data-driven discovery (chaotic ones) are often the least desirable/amenable for control.  While this may appear hopeless, we show that certain prior physical knowledge can nonetheless recover uniqueness for non-chaotic systems, which are non-discoverable from trajectory data alone.
The findings of our work warrant a critical re-evaluation of the fundamental assumptions underpinning purely data-driven discovery, urging practitioners to confront the limits of what can be learned from observational data alone, and underscoring the mathematical necessity of integrating known physical priors.

\end{abstract}

\vspace{2em}
\keywords{Chaos \and Topological Transitivity \and Identifiability \and Discoverability \and Symbolic Discovery \and Neural Operators \and Dynamical Systems \and System Identification \and Model Discovery \and Surrogate Modeling \and Data-Driven Modeling}
\newpage
\section{Introduction}
The deep learning revolution of the last decade has led to a paradigm change in modeling approaches, resulting in a move away from first principles deductive models towards inductive models utilizing ever-increasing pools of data. The paradigm of AI-driven scientific discovery has spurred remarkable progress, with applications appearing in chemistry \citep{dara2022machine, mater2019deep}, physics \citep{karagiorgi2022machine, carleo2019machine, degrave2022magnetic}, biology \citep{jumper2021highly}, and an ever-increasing suite of new methods promising to automate the process of uncovering governing physical laws and models directly from data \citep{brunton2024promising,vinuesa2022enhancing,cuomo2022scientific}. This ambition to derive models directly from observations, whether as learned surrogates \citep{azizzadenesheli2024neural} or through symbolic discovery techniques \citep{brunton2016discovering,cranmer2019learning,cornelio2023combining,cory2024evolving}, represents a potential revolution in scientific methodology.

However, the practical feasibility \citep{greivy} and reliability \citep{de2023physics, grossmann2024can} of these models has often fallen short of expectations, despite reported promising results \citep{Nature}. Traditionally, the cause of such inconsistencies has been attributed to a ``reproducibility crisis'' \citep{baker2016reproducibility}, especially fueled by the rapid progress of deep learning \citep{hutson2018artificial,sculley2018winner,kapoor2023leakage}.

\begin{figure}[t]
    \centering
    \includegraphics[width=.95\linewidth]{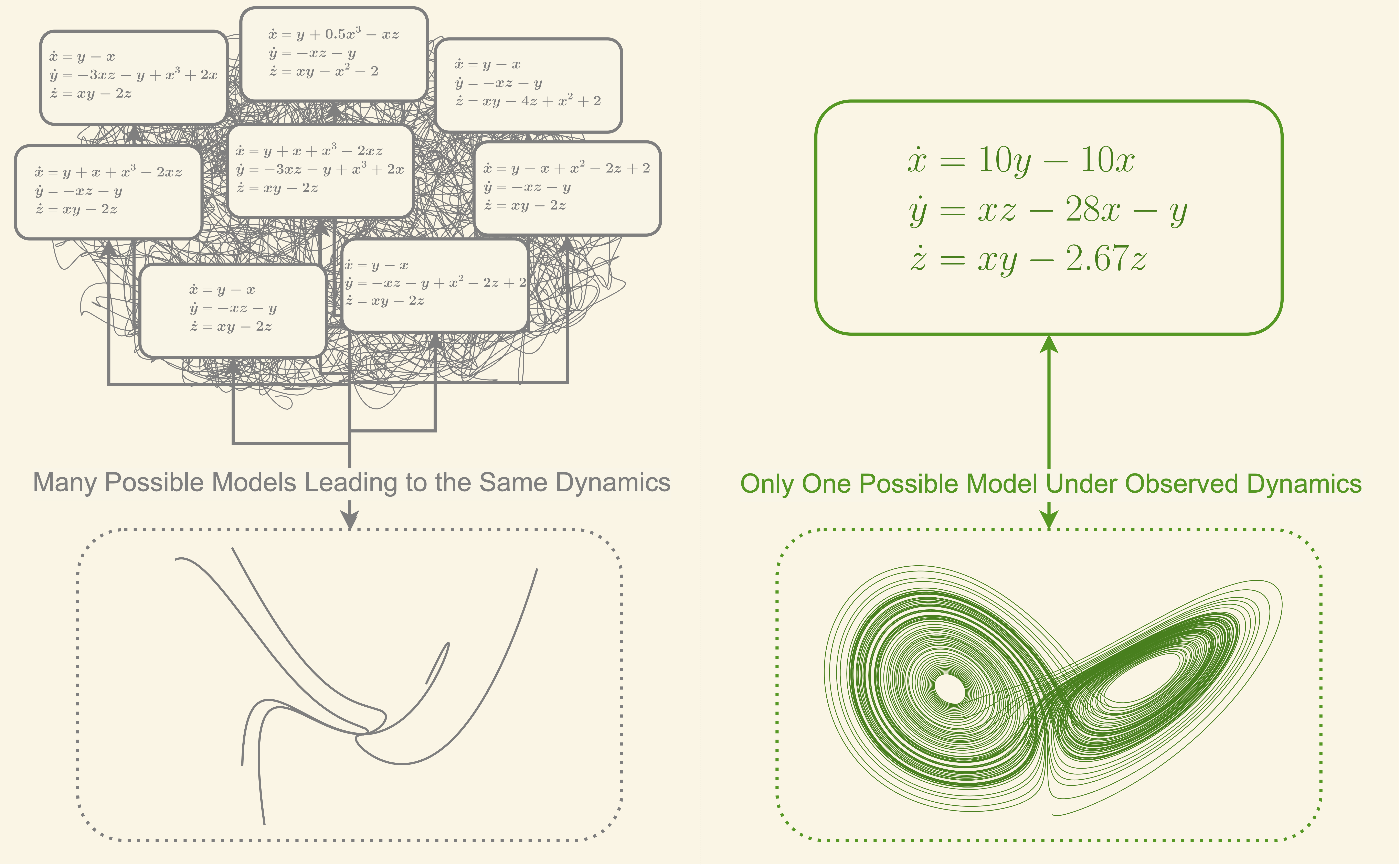}
    \caption{Model ambiguity in non-chaotic systems versus uniqueness in chaotic systems. \textbf{(Left)} For non-chaotic systems like the integrable Lorenz model, different mathematical models can generate identical dynamics, precluding unique discovery (\Cref{thm:first_integral}). \textbf{(Right)} Conversely, the dynamics of a chaotic system like the Lorenz attractor uniquely specify the underlying model, making it discoverable (\Cref{cor:lorenz}).}
    \label{fig:Lorenz_discoverability}
\end{figure}

We argue that the real limitation of dynamics discovery is not methodological. It stems from a foundational issue tied to the nature of the systems being studied. The enterprise of data-driven discovery rests on a critical, yet often implicit, assumption: that the observational data is sufficient to uniquely specify the underlying system. When this assumption of uniqueness fails, illustrated in \Cref{fig:Lorenz_discoverability}, the discovery process is compromised. The resulting models may fit the available data perfectly but lack genuine predictive power or physical reality, undermining the reliability required in scientific applications \citep{vafa2025foundationmodelfoundusing}. Crucially, problems with non-uniqueness arise even in simple linear systems, raising questions about their prevalence in more complex scenarios \citep{casolo2025identifiabilitychallengessparselinear}.

In this paper, we show that \emph{chaotic dynamics} are the key condition for ensuring this uniqueness from trajectory data, heuristically observed in complex chaotic systems \citep{shokar2025deeplearningevolutionoperator}. The prevalence of chaotic systems in benchmark datasets may have inadvertently obscured this fundamental limitation \citep{wyder2025commontaskframeworkcritical,kaheman2023experimental,kaptanoglu2023benchmarking,takamoto2022pdebench,herde2024poseidon,ohana2024well,chesebro2025scientificmachinelearningchaotic,chaos_benchmarks}. Consequently, many discovery methods appear robust because they are tested on systems where the assumption of uniqueness naturally holds, yet they fail when applied to non-chaotic dynamics, where it does not.\footnote{This observation may help account for why certain fields, such as climate prediction, have demonstrated greater success in developing data-driven models. See discussion in \Cref{para:broad}.}

While relatively new to the field of scientific machine learning, questions of uniqueness and stability of solutions under noisy observations are central to the field of inverse problems \citep{arridge2019solving}, which classifies problems as well or ill-posed based on the criteria established by \citet{hadamard1902problemes}. A problem is well-posed if a solution exists, is unique, and depends continuously on the initial data. Drawing on this framework, we ask a fundamental question: is the problem of discovering a dynamical system from its trajectories inherently well-posed?
While prior work has explored the uniqueness question under the term ``identifiability'' \citep{stanhope2014identifiability,qiu2022identifiability,bellman1970structural,cobelli1980parameter,distefano1980parameter,miao2011identifiability,nguyen1982review}, it has largely been confined to restrictive settings, such as linear systems, which are more akin to parameter identification than true model discovery. We extend this inquiry to the much broader and more practical function classes relevant to modern deep learning: continuous ($C^0$) and real analytic ($C^\omega$) spaces, a concept we term \emph{discoverability}. 
\begin{figure}[t]
    \centering
    \begin{subfigure}[b]{0.47\linewidth}
        \centering
        \includegraphics[width=\linewidth]{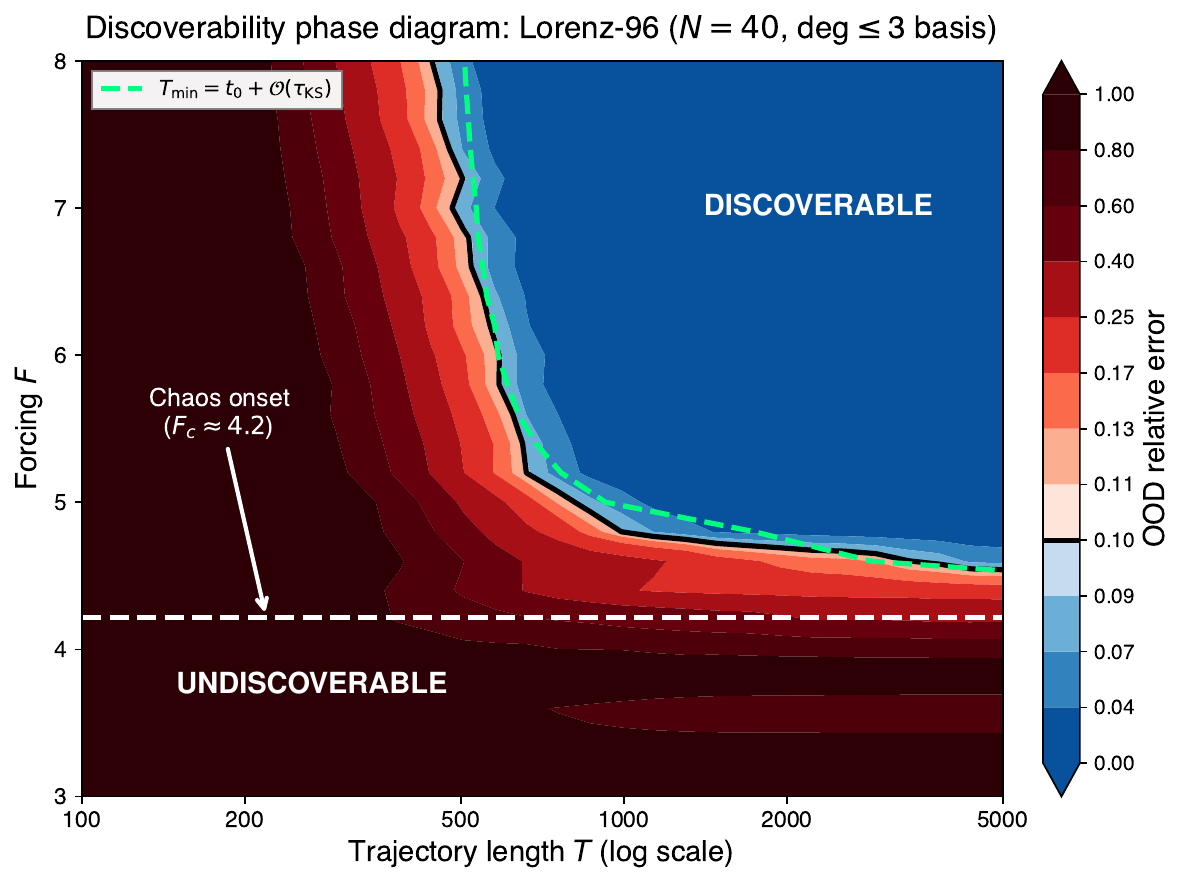}
        \caption{Out-of-distribution Error}
        \label{fig:lorenz_ood_error}
    \end{subfigure}
    \hfill
    \begin{subfigure}[b]{0.47\linewidth}
        \centering
        \includegraphics[width=\linewidth]{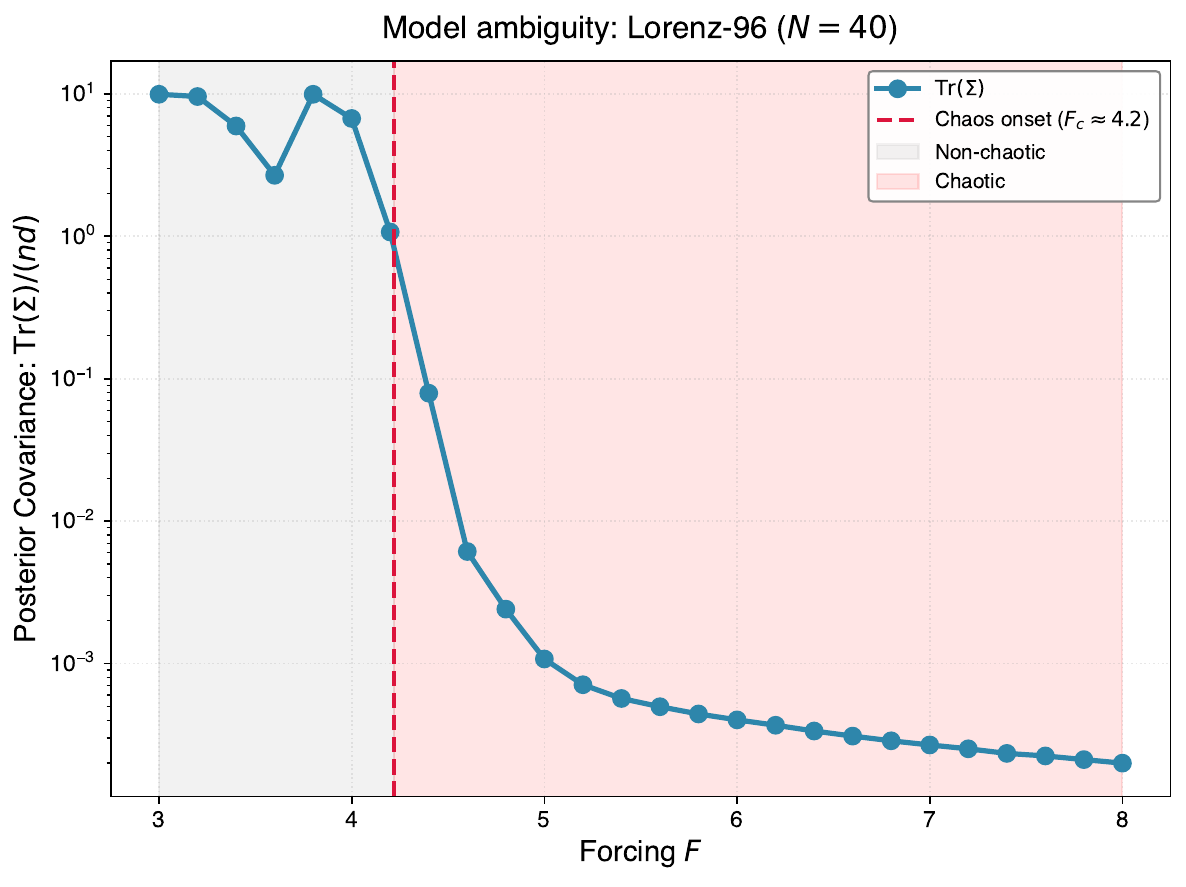}
        \caption{Model Ambiguity ($\mathrm{tr}(\Sigma)$)}
        \label{fig:lorenz_ood_cov}
    \end{subfigure}
    \caption{Discoverability and Model Ambiguity maps for the Lorenz-96 system. (a) Out-of-distribution (OOD) prediction error for various forcing parameters under different trajectory lengths. (b) Model ambiguity, quantified by the trace of the posterior covariance matrix, $\mathrm{tr}(\Sigma)$. Both metrics capture the phase transition across the chaos boundary.}
    \label{fig:lorenz_ood}
\end{figure}

Historically, the challenge of deducing governing laws directly from observational data is centuries-old, long predating modern computational methods. It began with Johannes Kepler's empirical derivation of planetary kinematics from Tycho Brahe’s noisy astronomical datasets \citep{1609kepler}. This established the foundational principle that observational data must dictate the physical model, ultimately helping ignite the scientific revolution. Subsequent centuries saw the  process of curve-fitting mathematically formalized; the introduction of least-squares by Gauss and Legendre provided a statistical framework to handle inevitable measurement error \citep{legendre1806nouvelles,gauss1877theoria,stigler1990history}. This empirical tradition culminated in the mid-20th century with the formalization of system identification within the field of control theory. Driven by the demands of aerospace engineering and the necessity of stable feedback loops, researchers such as Zadeh, \r{A}str\"om, Ho and Kalman shifted the focus from physical modeling to generalized "black-box" techniques, developing robust frameworks for automatic extraction of linear state-space representations directly from trajectory data \citep{aastrom1971system,bellman1970structural,ho1966effective,zadeh1962circuit,gevers2006personal}.

In parallel to the practical pursuit of model discovery and identification, a separate mathematical lineage emerged that fundamentally redefined what observational data could reveal about complex systems. The roots of this lineage trace back to Henri Poincar\'e's discovery that deterministic systems could exhibit homoclinic tangles and chaotic dynamics, in his seminal work on the three-body problem \citep{poincare1890probleme,barrow-green1997poincare}. He established the fundamental impossibility of obtaining closed-form analytical solutions for many-body Hamiltonian systems, revealing how sensitive dependence on initial conditions undermines predictability, raising fundamental questions about parameter identifiability. While Poincar\'e viewed this complex topology as a barrier to predicting future states, it paradoxically became the theoretical key to understanding observational data a century later. In the 1980s, building upon Poincar\'e geometric foundations, Taken's delay embedding theorem formally demonstrated that the complete topological structure of a multi-dimensional chaotic system could be mathematically reconstructed from a single trajectory \citep{sauer1991embedology,packard1980geometry,takens2006detecting}. Building upon this geometric foundation, the natural culmination is to move beyond merely reconstructing the shape of the state space to discovering the system that generates it. It is precisely this final gap that our present work resolves. We establish that the theoretical limits of equation discovery are inextricably linked to this historical dichotomy, proving that the very chaotic dynamics Poincar\'e identified as destroying predictability are exactly the ones required to uniquely recover a system's governing laws from data.

\subsection{Overview of results}
This study demonstrates that for many systems, discoverability is not guaranteed. To illustrate the core challenge, consider two contrasting systems: the simple pendulum and the double pendulum (\Cref{fig:pendulums}). The oscillator's regular, periodic motion explores very little of its state space, and its dynamics can be described by an infinite family of distinct governing equations. It is therefore fundamentally non-discoverable from trajectory data alone, shown in \Cref{eg:simple_eg}. In stark contrast, the chaotic motion of the double pendulum allows it to densely explore enough of its domain. 
\begin{figure}[t]
\centering
\includegraphics[width=\textwidth]{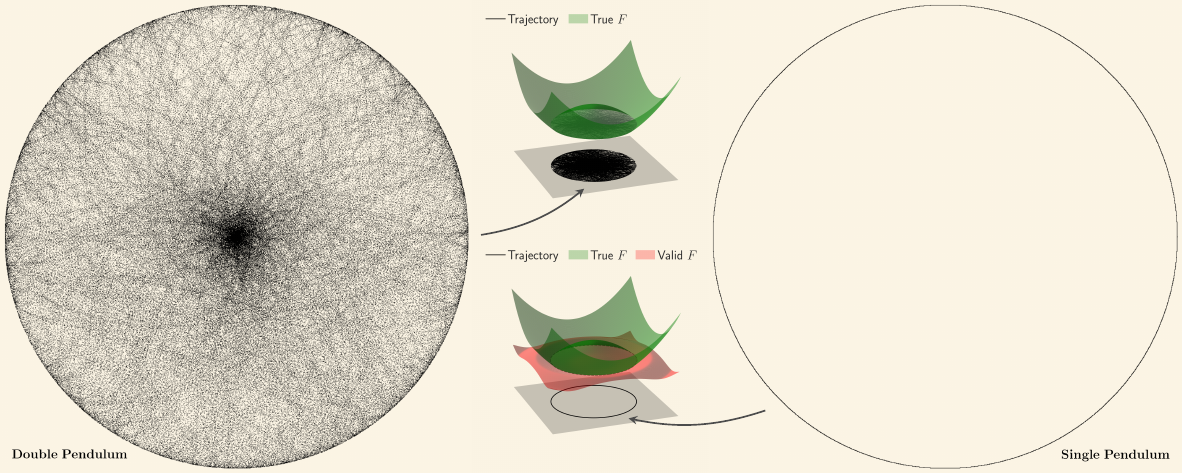}
\caption{An illustration of how system dynamics affect model discovery from data. The regular motion of the simple pendulum explores a limited portion of its state space, which is insufficient to uniquely determine its governing equations. In contrast, the chaotic double pendulum explores its domain more densely, enabling easier discovery of its governing equations from trajectory data. Inspired by \href{https://github.com/profConradi}{Simone Conradi}.}
\label{fig:pendulums}
\end{figure}
This leads to the central claim of our work: chaos, while difficult to model reliably, is crucial for ensuring discoverability. Formally, we consider dynamical systems of the form\footnote{Do these results extend to PDEs and higher order ODEs? See discussion in \Cref{para:pdes}.} (see \Cref{sec:background} for full setup):
\begin{equation}
\dot u = F(u) \quad\text{ for } u:\mathbb{R}\rightarrow\mathbb{R}^d,    
\end{equation}
and we show the following (see  \Cref{thm:cont,thm:main_anal} for full theorems as well as converse results).
\begin{theorem*}[Informal]\;\vspace{-0.5em}
\begin{itemize}[leftmargin=*]
    \item
    A system, chaotic on the whole domain is discoverable (from a single trajectory) in the space of continuous functions. 
    \item
    A system, chaotic on an attractor, is discoverable (from a single trajectory) in the space of analytic functions if the Hausdorff dimension of the attractor is greater than $d-1$.
\end{itemize}
\end{theorem*}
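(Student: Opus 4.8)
Both bullets follow the same two-move template: \emph{(i)} chaos supplies a single trajectory whose orbit is dense in the relevant set --- the whole domain $\Omega$ in the first case, the attractor $A$ in the second --- and \emph{(ii)} the observed trajectory pins down the vector field on that orbit, after which regularity (continuity, resp.\ real-analyticity) propagates the equality to all of $\Omega$. Move (ii) rests on the fact that for an autonomous system the velocity is a function of position: if $u(t_1)=u(t_2)=p$ then $\dot u(t_1)=\dot u(t_2)$, so the observed curve $t\mapsto u(t)$ determines a well-defined value $v(p):=\dot u(t)$ for every $p$ on the orbit, and any admissible model $G$ that reproduces the same trajectory must satisfy $G(p)=v(p)=F(p)$ there. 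Hence the entire content is how far agreement on the orbit can be pushed, and chaos is precisely what guarantees the orbit is large enough.

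For the first bullet, ``chaotic on the whole domain'' entails topological transitivity of the flow on $\Omega$; since $\Omega$ is a separable, completely metrizable space without isolated points, the flow analogue of the Birkhoff transitivity theorem produces a point $u_0\in\Omega$ whose forward orbit $\{u(t):t\ge 0\}$ is dense in $\Omega$. By the preceding paragraph $F=G$ on this dense set, and since $F,G\in C^0(\Omega;\R^d)$, continuity forces $F\equiv G$ on $\Omega$. Note only transitivity is used; sensitivity and density of periodic orbits play no role in the uniqueness argument.

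For the second bullet, transitivity of $F|_A$ again yields (via Birkhoff) a trajectory dense in the compact invariant attractor $A$, so any analytic competitor $G$ with the same trajectory satisfies $H:=F-G\equiv 0$ on $A$. Suppose $H\not\equiv 0$ on the connected component $\Omega_0\subseteq\Omega$ containing $A$; then some coordinate $H_i$ is a nonzero real-analytic function on $\Omega_0$, so its zero set $Z_i=\{H_i=0\}$ is a proper real-analytic subset of $\Omega_0$. Such a set is semianalytic of dimension at most $d-1$, hence $\dim_H Z_i\le d-1$. But $A\subseteq Z_i$ would give $\dim_H A\le d-1$, contradicting the hypothesis $\dim_H A>d-1$. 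Therefore every $H_i\equiv 0$, i.e.\ $F\equiv G$ on $\Omega_0$, and in particular on the whole trajectory.

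I expect the main obstacle to be the real-analytic geometry step: proving cleanly that the zero locus of a nontrivial real-analytic function on a connected open subset of $\R^d$ has Hausdorff dimension at most $d-1$. I would obtain this either from the general fact that semianalytic (indeed subanalytic) sets are rectifiable with Hausdorff dimension equal to their stratification dimension, or by a hands-on induction on $d$ using the real Weierstrass preparation theorem (choosing coordinates so the function is a Weierstrass polynomial in $x_d$ of degree $k$, whose zero set projects onto $\R^{d-1}$ with fibers of size $\le k$, giving a countable union of Lipschitz/Hölder graphs and hence locally finite $(d-1)$-dimensional measure). Secondary care points: checking that the hypotheses of the Birkhoff transitivity theorem genuinely hold for the flow on $\Omega$ and on $A$ (no isolated points, forward invariance, the correct notion of transitivity for flows rather than maps); ensuring $A$ lies in a single connected component of the domain so the identity theorem is applicable; and recording that trajectories of $C^0$ (resp.\ $C^\omega$) fields are $C^1$, so $\dot u$ is genuinely recoverable from the data. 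It is also worth flagging that the strict inequality $\dim_H A>d-1$ is sharp: if $A$ is contained in an analytic hypersurface (so $\dim_H A=d-1$), then $F$ and $G$ may legitimately differ off that surface --- which is exactly the first-integral obstruction appearing later in the paper.
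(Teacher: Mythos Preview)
Your proposal is correct and follows essentially the same route as the paper: Birkhoff's transitivity theorem furnishes a single trajectory dense in the relevant set (the whole domain, resp.\ the attractor), agreement of $F$ and $G$ on that orbit is immediate, and then continuity (resp.\ the bound $\dim_H\{H_i=0\}\le d-1$ for a nonzero real-analytic $H_i$) propagates the equality. The paper packages your ``move (ii)'' slightly more abstractly via a general reduction (its Proposition~3.1, which says uniqueness in a linear class $V$ is equivalent to no nonzero $G\in V$ vanishing on the orbit), and for the Hausdorff-dimension step it simply cites \cite{mityagin} rather than sketching Weierstrass preparation or stratification --- but the substance is identical.
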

This allows us to show, for the first time, that the Lorenz attractor is discoverable (\Cref{cor:lorenz}), further indicating that many well-known chaotic systems also are, as dimensions of their attractors are often larger than $d-1$ (see \Cref{nb:chaos}).
\begin{corollary*}
The classical Lorentz attractor is analytically discoverable. 
\end{corollary*}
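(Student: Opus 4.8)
The plan is to obtain \Cref{cor:lorenz} as a direct instance of the analytic discoverability theorem (\Cref{thm:main_anal}): for the classical Lorenz vector field $F$ on $\mathbb{R}^3$ with $(\sigma,\rho,\beta)=(10,28,8/3)$ it suffices to check that (i) $F$ admits a strange attractor $\Lambda$ which is the $\omega$-limit set of an open, full-Lebesgue-measure set of initial data; (ii) the induced flow on $\Lambda$ is chaotic in the sense required by \Cref{thm:main_anal}, i.e.\ topologically transitive (with sensitive dependence) on $\Lambda$; and (iii) the geometric condition $\dim_H\Lambda>d-1=2$ holds. Given (i)--(iii), the forward orbit of a Lebesgue-typical point equidistributes with respect to the (unique) physical measure, whose support is all of $\Lambda$, hence that single trajectory is dense in $\Lambda$ and meets the hypothesis of \Cref{thm:main_anal}; the conclusion is then that no analytic vector field other than $F$ reproduces it, which is exactly analytic discoverability of the classical Lorenz system.

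For (i) and (ii) I would invoke the rigorous, computer-assisted resolution of Smale's 14th problem: Tucker's theorem shows that the classical Lorenz equations support a robust strange attractor that is singular-hyperbolic in the sense of Morales--Pacifico--Pujals. The structural theory of singular-hyperbolic attractors of three-dimensional flows then gives, essentially off the shelf, that the flow on $\Lambda$ is topologically transitive, expansive, has sensitive dependence on initial conditions, carries a unique SRB measure of full support, and that $\Lambda$ attracts an open neighbourhood. This is precisely the ``chaotic on a strange attractor'' hypothesis of \Cref{thm:main_anal}, so the only genuinely new ingredient is the dimension bound (iii).

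For (iii) I would exploit the transverse product structure of $\Lambda$. Fixing a Poincar\'e cross-section $\Sigma$ through a regular point, the return map carries a dominated splitting into a uniformly expanding and a uniformly contracting direction, and $\Lambda\cap\Sigma$ is locally the product of an interval in the expanding direction (coming from the one-dimensional Lorenz return map) with a Cantor set $K$ in the contracting direction; together with the flow direction this gives, locally, $\Lambda\cong \text{interval}\times\text{interval}\times K$, hence $\dim_H\Lambda\ge 2+\dim_H K$. It then remains to show $\dim_H K>0$. The set $K$ is produced by a cookie-cutter-type iterated scheme whose branches correspond to the two lobes of the Lorenz template and whose contraction ratios are controlled by the verified hyperbolicity constants; a Bowen-type (thermodynamic formalism) argument then bounds $\dim_H K$ below by the positive root of the associated pressure equation, of order $\log 2/\log(1/\kappa)$ for the branch contraction rate $\kappa\in(0,1)$, consistent with the Kaplan--Yorke value $\dim_H\Lambda\approx 2.06$. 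Combining, $\dim_H\Lambda>2$, and \Cref{thm:main_anal} applies.

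I expect step (iii) to be the main obstacle. Existence of the attractor and its chaoticity are now standard consequences of Tucker's work and singular-hyperbolic theory, but the \emph{strict} inequality $\dim_H\Lambda>2$ requires ruling out the degenerate scenario in which the transverse set $K$ is countable --- equivalently, that $\Lambda$ is, up to a negligible set, a smooth branched surface of dimension exactly $2$. Turning the heuristic Lyapunov-dimension computation into a rigorous positive lower bound on $\dim_H K$ (via a Bowen equation using Tucker's explicit expansion/contraction estimates, or via a direct self-similar covering argument on $\Sigma$) is the technical heart of the corollary; everything else is bookkeeping to line the Lorenz attractor up against the hypotheses of \Cref{thm:main_anal}.
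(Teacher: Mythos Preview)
Your proposal is structurally identical to the paper's proof: both reduce the corollary to \Cref{thm:main_anal} by (a) invoking Tucker's computer-assisted proof that the classical Lorenz equations realize a geometric Lorenz flow, giving existence of the attractor and chaoticity on it, and (b) verifying the strict dimension bound $\dim_H\Lambda>2$.

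The only substantive difference is in (b). You propose to establish $\dim_H\Lambda>2$ from scratch via the transverse product structure $\Lambda\cap\Sigma\simeq\text{interval}\times K$ and a Bowen-equation lower bound on $\dim_H K$. The paper instead simply cites the result of Moreira--Pac\'ifico--Romana (\emph{Israel J.\ Math.}, 2020), which proves exactly this strict inequality for geometric Lorenz attractors, and then transports it to the classical system via Tucker. Your sketch is in fact the heuristic behind that paper, but carrying it out rigorously --- in particular, controlling the non-uniformity near the singularity and turning the cookie-cutter picture into an honest lower bound on $\dim_H K$ --- is a genuine piece of work, not ``bookkeeping''. So you have correctly identified the crux, and your outline is sound; the point is that this crux has already been resolved in the literature, and the paper's proof is accordingly a two-line citation chain (Moreira et al.\ for the dimension bound on the geometric model, Tucker for the identification with the classical system) rather than a new dimension argument.
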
 
This finding reveals a conundrum: for many scientific and engineering applications, the systems under consideration, while complex, are often stable and predictable. However, data-driven discovery methods, such as those used to develop digital twins for experimentation, are most effective when applied to chaotic systems. Consequently, the systems most suitable for this type of discovery are often the least desirable for control due to sensitivity  \citep{boccaletti2000control}. To make matters worse, analytic discoverability turns out to only be possible for systems without first integrals\textemdash{}conserved quantities, often present in real physical systems (see \Cref{thm:first_integral} for full statement).
\begin{theorem*}[Informal]
A system, analytically discoverable from a single trajectory, admits no analytic first integrals.
\end{theorem*}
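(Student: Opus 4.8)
The plan is to prove the contrapositive: if $F$ admits a non-constant analytic first integral $H$ — an analytic function satisfying $\langle \nabla H(u), F(u)\rangle = 0$ throughout the domain — then no single trajectory determines $F$ uniquely, so the system is not analytically discoverable. The mechanism is that a first integral confines every orbit to a \emph{proper} level set of $H$, and $F$ may be modified freely off that level set without perturbing any trajectory lying on it; since that level set is necessarily nowhere dense, the orbit can never be ``rich enough'' to pin $F$ down — the opposite of the density/transitivity feature exploited in the forward discoverability results (\Cref{thm:cont,thm:main_anal}).

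In detail, I would fix an arbitrary trajectory $u(\cdot)$, set $c := H(u(0))$, and observe that $\tfrac{d}{dt}H(u(t)) = \langle \nabla H(u(t)), F(u(t))\rangle = 0$, so the orbit (and its closure) lies in $\Sigma_c := H^{-1}(c)$. As $H-c$ is a nontrivial analytic function, $\Sigma_c$ is closed with empty interior. Then I would take
\[
 G := F + (H - c)\, W
\]
for any fixed analytic vector field $W$ with $(H-c)\,W \not\equiv 0$ (for instance $W$ the constant field $e_1$). This $G$ is analytic, equals $F$ on $\Sigma_c$, and differs from $F$ on the nonempty open set $\{(H-c)W \neq 0\}$. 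Since $u(t)\in\Sigma_c$ for all $t$, we have $G(u(t)) = F(u(t)) = \dot u(t)$, so $u(\cdot)$ also solves $\dot v = G(v)$ with the same initial value; by uniqueness of solutions for analytic (hence locally Lipschitz) fields it \emph{is} that solution, so $F$ and $G$ generate identical trajectories. Hence the data generated by $F$ along this orbit is reproduced exactly by $G\neq F$, and as this holds for every trajectory, the system is not analytically discoverable.

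The step demanding the most care is matching this construction to the paper's precise notion of discoverability: confirming that what a single trajectory reveals about $F$ is exactly its pointwise values along the orbit — so that $G\equiv F$ on $\Sigma_c \supseteq \overline{\{u(t)\}}$ really does make $F$ and $G$ indistinguishable — and noting that being trapped in the nowhere-dense $\Sigma_c$ is incompatible with the dense/transitive orbits that make discoverability possible. It is also worth recording the strengthenings that fall out for free: the family $\{F + (H-c)W : W \text{ analytic}\}$ is infinite-dimensional (the ambiguity in \Cref{fig:Lorenz_discoverability}); replacing $H-c$ by $\prod_i(H-c_i)$ defeats discoverability from any \emph{finite} set of trajectories; and if the dynamics are posed on $\Omega\subsetneq\R^d$ or $H$ is a first integral only on an attractor, one need only check that $\Sigma_c$ remains proper there, which again follows from $H$ being non-constant.
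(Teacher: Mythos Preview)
Your proof is correct and follows essentially the same route as the paper's \Cref{thm:first_integral}: both observe that any trajectory is confined to the level set $\{H=c\}$ of the first integral, and then use that $H-c$ is a nonzero analytic function vanishing on the trajectory. The paper phrases this via \Cref{prop:general-uniqueness} (the trajectory is not a set of uniqueness), while you spell out the alternative vector field $G = F + (H-c)W$ explicitly---but these are the same idea, since \Cref{prop:general-uniqueness} is precisely the equivalence between ``$F$ not unique'' and ``some nonzero element of $V$ vanishes on the orbit.''
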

We verify experimentally that these results are not simply theoretical, but are relevant in practice too. In \Cref{fig:lorenz_ood} we illustrate that both out-of-distribution generalization and model uncertainty become almost zero precisely at chaos onset, with similar observations for many other chaotic systems \Cref{fig:rossler_henonheiles_ood}. 

While it may appear that all is lost, and well-posedness is not achievable outside of purely chaotic systems, we show that using knowledge beyond simply the trajectories, like conservation laws, can lead to discoverability.
This can be illustrated through the example of a simple harmonic oscillator: 
\begin{example}\label{eg:simple_eg}
The simple harmonic oscillator illustrated in \Cref{fig:pendulums} is non-discoverable. For example, the following two dynamical systems both admit the same trajectory of a circle going through $(1,0)$:\newline
\begin{minipage}[t]{0.48\textwidth} %
\begin{equation}\label{eq:simple_eg1}
    \begin{cases}
        \dot{x} = y \\
        \dot{y} = -x
    \end{cases}
\end{equation}
\end{minipage}%
\hfill %
\begin{minipage}[t]{0.48\textwidth} %
\begin{equation}\label{eq:simple_eg2}
    \begin{cases}
        \dot{x} = y (x^2+y^2) \\
        \dot{y} = -x (x^2+y^2)
    \end{cases}
\end{equation}\;
\end{minipage}
However, as we demonstrate in \Cref{cor:uniq_sho}, its uniqueness within the space of analytic functions is fully recovered when constrained by the law $H =(x\dot{x}+y\dot{y})^2+ (\dot{x}^2 + \dot{y}^2 - x^2 - y^2)^2$, or whenever the function class is constrained to symmetric (skew-symmetric) Jacobian vector fields \Cref{cor:uniq_sho_2} with a law $H =(x\dot{x}+y\dot{y})^2$.
\end{example}
Our study thus provides a rigorous mathematical foundation that not only clarifies the conditions for data-driven discovery but also establishes the fundamental necessity of truly physics-informed approaches in scientific machine learning, requiring integration of both data and system axioms \citep{cranmer2019learning,greydanus2019hamiltonian, cranmer2020lagrangian,cornelio2023combining,arora2024invariant,cory2024evolving,dalton2024physics,canizares2024symplecticneuralflowsmodeling,srivastava2025ai,shumaylov2025lie}.

\subsection{Related Works}

The primary motivation for this work is the recent surge of interest in methods for approximate discovery of dynamical systems directly from trajectory data. This fundamental problem, however, is not novel. Scientists have manually derived models from empirical observations for centuries \citep{1609kepler,newton1822philosophiae,fourier1822theorie,ohm1827galvanische,fick1855ueber}, while from a computational perspective, it represents a long-standing area of research with foundational contributions dating back several decades \citep{ho1966effective,bellman1970structural,langley_bacon_mind1981,langley1987scientific,80202,bongard2007automated,schmidt2009distilling,6213241}.

This research area focuses on learning either the underlying vector field, $F$, or its corresponding flow map, $\phi_F$ directly from data. Methodologically, these approaches can be broadly classified into two main paradigms: surrogate learning, which approximates the dynamics using black-box models, and symbolic discovery, which aims to find explicit, interpretable mathematical expressions \citep{yu2024learning,north2023review}.
\paragraph{Learning differential equations.}
To address systems with unknown or computationally expensive physical models, a variety of surrogate methods have been developed. While foundational approaches include classical techniques like reduced-order modeling \citep{kerschen2005method,1102568,schilders2008model,benner2015} and Dynamic Mode Decomposition (DMD) \citep{mezic2005spectral, schmid2010dynamic,brunton2017chaos,kutz2016dynamic,williams2015data},
the field has increasingly shifted towards deep learning. Initially, these contemporary methods focused on parameterizing as neural networks either the governing vector field \citep{raissi2018deep, chen2018neural, greydanus2019hamiltonian, cranmer2020lagrangian} or the flow map \citep{bilovs2021neural, canizares2024symplecticneuralflowsmodeling}. Within the last five years, however, the predominant focus in scientific machine learning has shifted towards Neural Operators, which aim to learn the evolution operator of a physical system directly in function space \citep{luDeepONetLearningNonlinear2021,JMLR:v24:21-1524}. Among these, the Fourier Neural Operator has become a foundational model, leveraging Fourier transforms for efficient global convolutions \citep{li_fourier_2021}. While theoretically framed in infinite-dimensional function spaces, practical inference of these models is performed in finite dimensions, meaning they do not formally constitute operators \citep{bartolucci2023representation}. Neural Operators have become widely adopted, powering applications in weather forecasting \citep{pathak_fourcastnet_2022}, fluid dynamics \citep{renn_forecasting_2023}, seismic imaging \citep{li_solving_2023}, carbon capture \citep{wen_real-time_2023}, and plasma modeling for nuclear fusion \citep{gopakumar_plasma_2024}.

Despite their powerful approximation capabilities, the black-box nature of models like neural operators\footnote{Are neural operators really about discovery? See discussion in \Cref{para:nos}.} presents a fundamental challenge to their reliability and trustworthiness \citep{vafa2025foundationmodelfoundusing}. This lack of interpretability and generalizability is one of the main critical barriers to their adoption in scientific and engineering domains. Motivated by this, one line of research extends beyond neural models by discovering physically interpretable symbolic expressions.

\paragraph{Symbolic recovery of differential equations.}

Symbolic regression \citep{la2021contemporary} seeks to uncover analytical relationships between variables \(x_1, \ldots, x_n\) and \(y\) without assuming a fixed model structure. Early methods relied on genetic programming \citep{augusto2000symbolic, schmidt2009distilling, Schmidt2010AgefitnessPO, cranmer2023interpretable}, later extended with reinforcement learning \citep{petersen2019deep, sun2022symbolic} and hybrid approaches \citep{mundhenk2021symbolic}. Physical priors have also been introduced, such as exploiting symmetries \citep{udrescu2020tegmark} or embedding scientific laws \citep{cornelio2023combining,cory2024evolving,srivastava2025ai}.  
To overcome discrete search limitations, several methods reformulate the problem, enabling continuous optimization using linear \citep{McConaghy2011, brunton2016discovering} or compositional basis \citep{martius2016extrapolation, sahoo2018learning, scholl2025parfam} functions. Recent transformer-based approaches infer symbolic expressions directly from data, either through pre-training on synthetic corpora \citep{biggio2021neural, kamienny2022end, holt2022deep} or leveraging large language models \citep{grayeli2024symbolic, shojaee2024llm}.  
These techniques can be directly applied to discover differential equations by treating derivatives as variables \citep{la2021contemporary}. The Sparse Identification of Nonlinear Dynamics (SINDy) framework \citep{brunton2016discovering} uses sparse regression to infer closed-form ODEs and has been extended to PDEs \citep{Rudy2017DatadrivenDO}, implicit systems \citep{kaheman2020sindy}, learned feature spaces \citep{champion2019}, and non-smooth dynamics \citep{quade2018sparse}. Beyond SINDy, neural surrogates \citep{Hasan2020, both2021choudhury, stephany2022pde, chen2021physics} and variational formulations \citep{qian2022dcode, kacprzyk2023dcipher} address instability in numerical differentiation.

\paragraph{Identifiability.} Traditionally, data-driven discovery of dynamical systems assumes a known structural form of the underlying equations, with parameters estimated from data \citep{alessandrini1986identification, acar1993identification, knowles2001parameter}. The well-posedness of such inverse problems has been widely studied, particularly with respect to parameter identifiability \citep{bellman1970structural, cobelli1980parameter, distefano1980parameter, miao2011identifiability, nguyen1982review}. In contrast to these classical approaches, which focus on determining a finite set of parameters, our work seeks to identify an entire function within a broad, non-parametric class of dynamical systems, following the advances in symbolic recovery of differential equations outlined above.
Recent progress toward this direction includes the identifiability results for linear dynamical systems by \citet{stanhope2014identifiability} and \citet{qiu2022identifiability}, as well as the exploration by \citet{casolo2025identifiabilitychallengessparselinear}, who relate identifiability to the sparsity of the system matrix and show that many such systems are unidentifiable in practice. As these approaches are limited to linear systems, the most relevant prior work to ours is due to \citet{scholl2022symbolicrecoverydifferentialequations} and \citet{scholl2023uniqueness}, who established necessary and sufficient conditions for identifiability in more general, non-linear classes of ODEs and PDEs, later extended to noisy systems by \citet{hauger2024robustidentifiabilitysymbolicrecovery}. Building on these foundations, we extend the framework of \citet{scholl2022symbolicrecoverydifferentialequations} to generic dynamical systems and demonstrate that identifiability in this setting is fundamentally determined by the presence of chaos\textemdash{}an aspect that, to our knowledge, has not been previously examined.

\subsection{Structure of the Paper}
In \Cref{sec:background} we first present the minimal mathematical background, covering the topics of identifiability, chaos and analytic geometry. In \Cref{sec:sets_of_uniqueness} we establish the reduction of the problem of discoverability to the simpler problem of classifying sets of uniqueness. Building on this, \Cref{sec:main} presents the main results of this paper, on classification of when trajectories are able to densely cover sets of uniqueness, establishing discoverability of the classical Lorenz system in \Cref{cor:lorenz}. \Cref{sec:cons_laws} provides an overview on usefulness of conservation laws for analytical discoverability, including both impossibility of discovery under first integrals, as well as examples of how it may be recovered. We conclude with a discussion of broader impacts and limitations of this work in \Cref{sec:discussion}.

\section{Minimal Mathematical Background}\label{sec:background}
In what follows we concern ourselves with dynamical systems of the form 
\begin{equation}\label{eq:basic}
\dot u = F(u) \quad\text{ for } u:\mathbb{R}\rightarrow\mathbb{R}^d    
\end{equation}
The aim of this study is to consider when symbolic or parametric discovery of $F$ based on observation data can be considered well-posed from the point of view of uniqueness. In order to define chaos, we will need to consider the flows generated by such a dynamical system. For this, assume that $F$ is locally Lipschitz. By the Picard-Lindel\"{o}f theorem there is locally a unique solution to the dynamical system. This may not extend to infinite time, so we make the following definition. A flow domain is an open subset $D \subseteq \mathbb{R}^d \times \mathbb{R}$ such that each time slice $D^t = \{x \in \mathbb{R}^d \mid (x, t) \in D\}$ is an open interval containing zero. Then it is a classical result, see \cite[Chapter 9]{Lee2012} for the smooth case, that we may find a maximal (with respect to inclusion) flow domain $D \subseteq \mathbb{R}^d \times \mathbb{R}$ and a flow-map $\phi_F : D \rightarrow \mathbb{R}^d$, sometimes denoted as $\phi^t(\cdot)=\phi(\cdot,t)$ which solves the initial value problem 
\begin{equation}
    \frac{d}{dt} \phi_F(x_0, t) = F(\phi_F(x_0, t)), \quad \phi_F(x_0, 0) = x_0
\end{equation}
If we restrict the domain of the initial value problem to a compact subset of $\mathbb{R}^d$ then it is a standard result, and easy to see from compactness, that in fact this flow domain can be extended to infinite time. 
\subsection{Identifiability and Discoverability}
\begin{definition} [Uniqueness/Identifiability] \label{def:unique-dynamical-system}
    Let $u:U\rightarrow\mathbb{R}^d$ be a differentiable function on the open set $U\subset\mathbb{R}$. Further, let $V$ be a set of functions which map from $\mathbb{R}^d$ to $\mathbb{R}^d$ and $F\in V$ such that
    \begin{equation} \label{eq:def-unique-dynamical-system}
       \dot u = F(u).
    \end{equation}
    We say that the function $u$ \emph{solves a unique dynamical system} if $F$ is the unique function in $V$ such that \Cref{eq:def-unique-dynamical-system} holds. In such cases $F$ is said to be \emph{identifiable} from $u$.
\end{definition}
As one should expect, identifiability inherently depends on the trajectory $u$, and not all trajectories are sufficient to result in uniqueness. Because of this, we instead consider uniqueness under \emph{some} set of trajectories, which we term \emph{discoverability}.
\begin{definition} [$n$ Discoverability] \label{def:ndiscoverable-dynamical-systems} 
    We say that $F$ is discoverable from $n$ trajectories in $V$, if there exist $u_1,\dots, u_n:U\rightarrow\mathbb{R}^d$ differentiable trajectories on the open set $U\subset\mathbb{R}$, such that $F$ is the unique function in $V$ such that \Cref{eq:ndiscoverable-dynamical-systems} holds for all $n$ trajectories 
    \begin{equation} \label{eq:ndiscoverable-dynamical-systems}
       \dot u_i = F(u_i).
    \end{equation}
    We say that $F$ is finitely discoverable in $V$, if there is $M\in\mathbb{N}$ s.t. $F$ is $M$ discoverable.
\end{definition}
\begin{nb}
    Definition of discoverability for us is practically equivalent to that of uniqueness\textemdash{}primarily as a hallmark of reliability and well-posedness. If one were to try to learn a surrogate or derive a symbolic representation for $F$, any method that is able to fit the data would necessarily discover the right system by uniqueness. 
\end{nb}
In this paper, we primarily focus on chaotic systems due to their discoverability properties as we show in \Cref{sec:main}. The study of chaotic systems spans many different fields, including physics, computer science, and mathematics, with various definitions capable of capturing different aspects of what we intuitively regard as chaotic behavior, without an all-encompassing definition. The main component, without which no system would ever be considered chaotic, is that of \emph{topological transitivity} \citep{blanchard2008topologicalchaosmean}.
\begin{definition}[Topological Transitivity]
The dynamical system $F$ with corresponding flow $\phi^t$ is \emph{topologically transitive} if for any non-empty, open sets $U, V \subset \mathbb{R}^d$, $\exists\, t > 0$ such that:
    \begin{equation}
        \phi^t(U) \cap V \neq \emptyset
    \end{equation}
\end{definition}
The idea behind topological transitivity is that chaotic systems must be sufficiently mixing. In the following sections, the main object of study will be the trajectories generated by flowing the dynamical system. 
\begin{definition}[Trajectory] We define the \emph{trajectory} or \emph{orbit} as:
\[
\operatorname{Traj}(x, F):=\left\{\phi^t(x): t \in [0,+\infty]\right\} .
\]
We will say that $\phi$ is \emph{hypercyclic} if it has a dense trajectory, that is, there exists $x \in M$ such that $\operatorname{Traj}(x, F)$ is dense in $M$; such an $x$ will be called a \emph{hypercyclic point} for $\phi$.
\end{definition}

The main result we shall rely on is Birkhoff's theorem on topological transitivity \citep{birkhoff_surface_1922}, which we state here for the setting of flows: 

\begin{lemma}[Continuous Birkhoff's Lemma. Proof in \Cref{sec:BirkProof}]\label{lem:birk}
     Let $X$ be a complete metric space with a continuous dynamical system $\phi$. Then $\phi$ is topologically transitive if and only if it has a hypercyclic point. In this case, the set of hypercyclic points is dense. 
\end{lemma}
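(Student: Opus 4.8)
The plan is to derive the equivalence, and the density statement, from the Baire category theorem: this is the flow version of the classical Birkhoff transitivity theorem, and the reverse implication (topological transitivity $\Rightarrow$ a dense set of hypercyclic points) carries essentially all of the content.

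The forward direction is elementary. If $x$ is hypercyclic then $\operatorname{Traj}(x,F)$ is dense, so for any nonempty open $U,V\subseteq X$ there are times $t_1,t_2\ge 0$ with $\phi^{t_1}(x)\in U$ and $\phi^{t_2}(x)\in V$. Whenever these can be chosen with $t_2>t_1$, the semigroup property gives $\phi^{t_2-t_1}(\phi^{t_1}(x))=\phi^{t_2}(x)\in V$, hence $\phi^{t_2-t_1}(U)\cap V\neq\emptyset$ with $t_2-t_1>0$, which is topological transitivity. The only point needing a word is the ordering of the hitting times: if all hits of $V$ preceded all hits of $U$, then $V\cap\operatorname{Traj}(x,F)$ would lie in a fixed compact orbit segment $\phi([0,R]\times\{x\})$, and density would force the nonempty open set $V$ into a compact arc, a degenerate case in which transitivity follows directly from recurrence along the arc. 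I would dispatch this in one sentence.

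For the reverse direction, assume $\phi$ is topologically transitive, and fix a countable base $\{B_n\}_{n\in\mathbb N}$ of nonempty open sets of $X$ (available since, in every setting of this paper, $X$ is a compact subset of $\mathbb{R}^d$ and hence separable). Define, for each $n$,
\[
G_n \;:=\; \bigcup_{t>0}\phi^{-t}(B_n)\;=\;\{\,x\in X:\ \exists\,t>0,\ \phi^t(x)\in B_n\,\}.
\]
Each time-$t$ map $\phi^t$ is continuous, so each $G_n$ is open. It is also dense: for a nonempty open $U$, topological transitivity supplies $t>0$ with $\phi^t(U)\cap B_n\neq\emptyset$, i.e.\ some $y\in U$ with $\phi^t(y)\in B_n$, whence $y\in\phi^{-t}(B_n)\subseteq G_n$ and $U\cap G_n\neq\emptyset$. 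Since $X$ is a complete metric space it is a Baire space, so $\mathcal G:=\bigcap_n G_n$ is a dense $G_\delta$ (in particular nonempty). Any $x\in\mathcal G$ enters every basic open set at some positive time, so $\{\phi^t(x):t>0\}$, and therefore $\operatorname{Traj}(x,F)$, is dense; thus every point of $\mathcal G$ is hypercyclic, proving both the existence of a hypercyclic point and the density of the set of hypercyclic points.

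The single genuine obstacle is the appeal to a countable base: without separability (equivalently second countability) of $X$, topological transitivity does not force a dense orbit, so this hypothesis must be used, and I would state it explicitly even though it is automatic in the applications. Everything else is bookkeeping — keeping the strictly-positive-time convention of topological transitivity consistent in the definition of $G_n$ and in the forward argument, and observing that completeness of $X$ is invoked exactly once, to apply Baire.
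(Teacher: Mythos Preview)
Your proposal is correct and follows essentially the same route as the paper: a countable base $\{B_n\}$, the open dense sets $G_n=\bigcup_{t>0}\phi^{-t}(B_n)$, and an appeal to the Baire category theorem to produce a dense $G_\delta$ of hypercyclic points. The paper dismisses the forward implication as ``obvious'' and does not flag the separability hypothesis; you are slightly more careful on both counts, but the argument is the same.
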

\subsection{Chaos}
Instead of the usual definition of chaos, we will consider a general definition for systems, which may not be chaotic everywhere, but instead only on an attractor (\Cref{def:attractor}). For systems, that are chaotic on the whole domain, the two definitions agree, as in such settings the whole domain is an attracting set. Oftentimes, these are also termed \textit{strange} based on the Hausdorff dimension of the attracting set. We follow \citet{crob} in defining these. 
\begin{definition}[Invariant set]
An \emph{invariant set} for a map ${\phi_{F}}$ is a set $\mc{A}$ in the domain such that ${\phi_{F}}(\mc{A})\subseteq\mc{A}$. Therefore, for every ${x}$ in $\mc{A}$, the trajectory $\Orb{(x, F)}$ is entirely contained in $\mc{A}$.   
\end{definition}

An invariant set $\mc{A}$ is \emph{topologically transitive} provided that there is a point $x^*$ in $\mc{A}$ such that the trajectory of ${x}^*$ is dense in $\mc{A}$, i.e., $\operatorname{cl}\left(\Orb\left({x}^*, {F}\right)\right)=\mc{A}$. Another important feature of chaotic systems is sensitivity, which has little importance for the upcoming results of ours. 
\begin{definition}[Sensitivity]
 A map ${\phi_{F}}$ has \emph{sensitive dependence on initial conditions (SDIC)} at all points of $\mc{A}$ provided that for each point ${x} \in \mc{A}$ there is an $r>0$ such that for all $\delta>0$, there are ${y}$ and $t \geq 0$ with $\|{y}-{x}\| \leq \delta$ and $\left\|{\phi}_F^t({y})-{\phi}_F^t({x})\right\|>r$.   

Compared to general sensitivity, this effectively means that the nearby point ${y}$ whose trajectory moves away from the trajectory of ${x}$ can be chosen in the set $\mc{A}$ and not just in the ambient space.
\end{definition}

\subsubsection{Attractors}
The following definition of an attractor, as in \citep{robinson1998dynamical, robinson2012introduction}, is similar to asymptotic stability of a fixed point, and is defined in terms of a trapping region.
\begin{definition}[Trapping region]
A set ${U}$ is called a \emph{trapping region} ${U}$ for map ${f}$ provided its closure is compact and
$$
\operatorname{cl}({\Orb (U,F)}) \subset \operatorname{int}({U}).
$$
Since the closure is mapped into the interior, the set is mapped well inside itself. A set $\mc{A}$ is called an \emph{attracting set} if there exists trapping region ${U}$ such that
$$
\mc{A}=\bigcap_{t\in[0,+\infty)} {\phi}^t({U}).
$$
The largest such $U$ is called the \emph{basin of attraction} for $\mc{A}$.
Because a trapping region has compact closure, an attracting set is necessarily compact.
\end{definition}

\begin{definition}[Attractor]\label{def:attractor}
A set $\mc{A}$ is called an \emph{attractor} for ${\phi_{F}}$ if it is an attracting set such that there are no nontrivial sub-attracting sets, i.e., no attracting set $\mc{A}^{\prime} \subset \mc{A}$ such that $\emptyset \neq \mc{A}^{\prime} \neq \mc{A}$.   
\end{definition}

\begin{definition} [Chaos]
A map ${f}$ is said to be \emph{chaotic} on an invariant set $\mc{A}$ provided that the following conditions are satisfied:
\begin{itemize}
    \item[(a)] The map $f$ has sensitive dependence on initial conditions when restricted to the attractor $\mc{A}$.
    \item[(b)] The map ${f}$ is topological transitive on $\mc{A}$, i.e., there exists an ${x}^*$ such that $\operatorname{cl}\left(\Orb{\left({x}^*, {f}\right)}\right)=\mc{A}$.
\end{itemize}
A set $\mc{A}$ is called a \emph{chaotic attractor} for a map ${f}$ provided that the set $\mc{A}$ is an attractor for ${f}$ and ${f}$ is chaotic on $\mc{A}$.
\end{definition}
An alternative way to present ideas above is through $\omega$-limit sets of trajectories.
\begin{definition}
    The $\omega$-limit set of the trajectory $\gamma$ is defined as
    \begin{equation*}
        \lim_{\omega} \gamma = \bigcap_{s \in [0,\infty)} \overline{\{\gamma(t) \mid t > s\}}
    \end{equation*}
    A generalized limit cycle is a trajectory $\Omega$ such that $\lim_{\omega} \Omega = \overline{\Omega}$. We shall call both the trajectory $\Omega$ and the closure $\overline{\Omega}$ a generalized limit cycle, as from a discoverability perspective these are the same.
\end{definition}

These $\omega$-limits have some nice properties, which we go over in detail in \Cref{app:trajectories}. For our main purposes we only need that $\lim_{\omega} \gamma \subseteq \overline{\gamma}$, which follows from the definition. If $\gamma$ is contained in the basin of attraction of some attractor then $\lim_\omega \gamma$ will be this attractor. In general though, the $\omega$-limit set behaves like the attractor of the specific trajectory $\gamma$. 

\subsection{Analytic Geometry}
The central challenge of discoverability within the space of analytic functions, which forms the core of this work, has to be approached through the lens of analytic geometry. We begin with the basic object of real analytic geometry. 
\begin{definition}
    A closed subset $X \subseteq \mathbb{R}^d$ is called an analytic subset if for every point $x \in X$ there is an open neighborhood $U$ of $x$ such that $X \cap U$ is given by the common zero set of finitely many analytic functions $f_1, \dots, f_n$ on $U$. 
\end{definition}

For our purposes, we would like the sets we consider to be cut out not just locally by analytic functions but actually globally. This brings us to the following notion:

\begin{definition}
    A closed subset $X \subseteq \mathbb{R}^d$ is C-analytic if there are finitely many real-analytic function $f_1, \dots, f_n$ such that $X$ is the common zero-set of these functions. 
\end{definition}

These two definitions are related, but not equivalent, which is the one important difference between complex analytic geometry and real analytic geometry. Indeed, there exist analytic sets in $\mathbb{R}^d$ that are not C-analytic, for example \cite[Examples 1.2.26]{Acquistapace2022}. 

Finally we introduce two weaker notions of analytic sets, which will be essential for the rest of the article. These are introduced for the main reason that analytic sets do not play well with non-proper analytic mappings, and the trajectories that we look at are for the most part non-proper.

\begin{definition}
    A set $X \subseteq \mathbb{R}^d$ is semi-analytic if for each point $x \in \mathbb{R}^d$ there is an open neighborhood $U$ of $x$ and analytic functions $f_1, \dots, f_n$ and $g_1, \dots, g_m$ on $U$ such that
    \begin{equation*}
        X \cap U = \{y \in U \mid f_i(y) = 0, g_j(y) > 0 \quad \forall i, j\}
    \end{equation*}
\end{definition}

\begin{definition}
    A set $X \subseteq \mathbb{R}^d$ is subanalytic if it is locally the projection of a semi-analytic set. Namely, for each $x \in \mathbb{R}^d$ there exists an open neighborhood $U$ of $x$, a natural number $n$, and a relatively compact semi-analytic set $A \subseteq U \times \mathbb{R}^n$ such that $\pi(A) = X \cap U$ where $\pi : \mathbb{R}^d \times \mathbb{R}^n \rightarrow \mathbb{R}^d$ is the projection.
\end{definition}

\section{Discoverability}\label{sec:sets_of_uniqueness}
\begin{figure}[t]
    \centering
    \includegraphics[width=\linewidth]{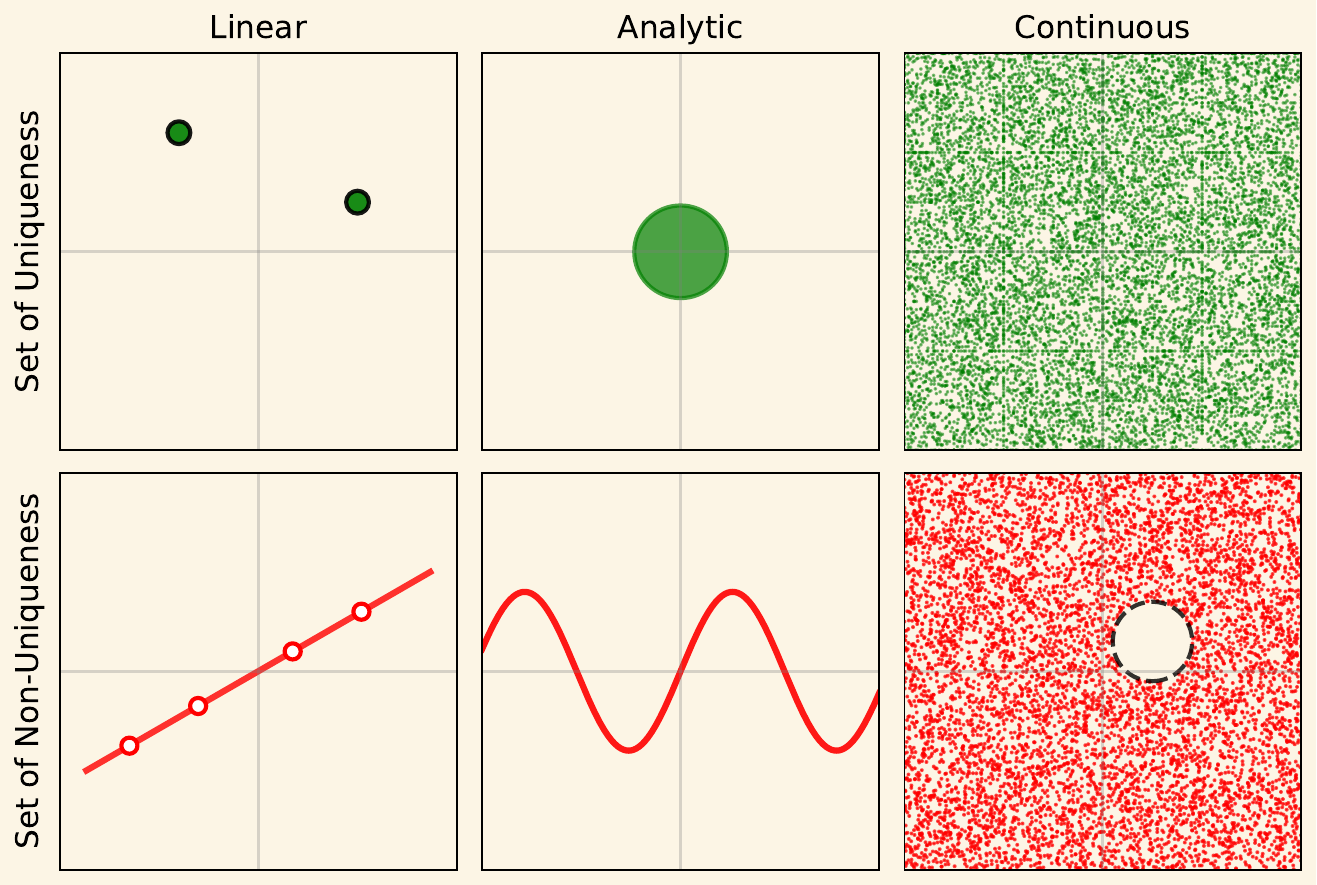}
    \caption{Examples of \textcolor{mpl_green_C2}{sets of uniqueness} and \textcolor{red}{non-uniqueness} for various function spaces. Columns differentiate between function spaces: Linear (\citep{qiu2022identifiability,casolo2025identifiabilitychallengessparselinear}), Real Analytic (\Cref{thm:open-condition-for-uniqueness-of-analytic-functions}), and Continuous (\Cref{prop:cont_uniq}). Being a set of uniqueness for a function space implies a function, zero on that set, is zero everywhere.}
    \label{fig:sofu}
\end{figure}
With all the definitions now introduced, we can move onto the main topic of this study: discoverability. The main result, turning the problem of discoverability into a much simpler one is based on \citet[Proposition 1]{scholl2022symbolicrecoverydifferentialequations}, extended here to multi-dimensional systems.
\begin{proposition} \label{prop:general-uniqueness}
    Let $V$ be any class of functions mapping from $\mathbb{R}^d$ to $\mathbb{R}^d$ which is closed under addition and subtraction. Assume there exists $F\in V$ such that $\dot u = F(u)$. Denote with $\mathcal{O}=u(U)$ the image of $u$. Then, $F$ is the unique function in $V$ such that $\dot u = F(u)$ if and only if there is no $G\in V\setminus\{0\}$ which is $0$ on whole $\mathcal{O}$, i.e., $G|_\mathcal{O}\equiv0$.    
\end{proposition}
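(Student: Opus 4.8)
The plan is to prove both directions of the equivalence by leveraging the linear structure of $V$ under addition and subtraction. The key observation is that any candidate $G \in V$ satisfying $\dot u = G(u)$ must agree with $F$ along the entire orbit $\mathcal{O} = u(U)$, because at each point $u(t)$ both $F$ and $G$ must equal the velocity $\dot u(t)$.

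First, for the forward direction, I would argue the contrapositive: suppose there exists $H \in V \setminus \{0\}$ with $H|_{\mathcal{O}} \equiv 0$. Then consider $G := F + H$. Since $V$ is closed under addition, $G \in V$, and $G \neq F$ because $H \neq 0$. For any $t \in U$, we have $G(u(t)) = F(u(t)) + H(u(t)) = F(u(t)) + 0 = \dot u(t)$, using that $u(t) \in \mathcal{O}$. Hence $\dot u = G(u)$ holds with $G \neq F$, so $F$ is not the unique such function. This establishes that uniqueness implies no such nonzero $H$ exists.

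For the converse, suppose $F$ is not unique, so there exists $G \in V$ with $G \neq F$ and $\dot u = G(u)$. Define $H := F - G$. Closure of $V$ under subtraction gives $H \in V$, and $H \neq 0$ since $F \neq G$. For any $t \in U$, evaluating at $u(t) \in \mathcal{O}$: $H(u(t)) = F(u(t)) - G(u(t)) = \dot u(t) - \dot u(t) = 0$. Therefore $H|_{\mathcal{O}} \equiv 0$ with $H \in V \setminus \{0\}$, which is exactly the negation of the right-hand condition.

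This argument is essentially a direct verification, so I do not anticipate a serious obstacle — the proof is a short computation exploiting linearity. The only subtlety worth stating explicitly is that the condition $\dot u = F(u)$ is to be read pointwise on $U$, and that $\mathcal{O} = u(U)$ is precisely the set of points at which $F$ is constrained by the data; no regularity of $u$ beyond differentiability is needed, and the multi-dimensional case is handled component-wise with no change to the logic. I would also remark that the hypothesis "$F \in V$ such that $\dot u = F(u)$" is what makes the statement non-vacuous; the equivalence then cleanly reduces discoverability to the purely geometric-analytic question of whether $\mathcal{O}$ is a \emph{set of uniqueness} for the class $V$, which is the reduction the subsequent sections exploit.
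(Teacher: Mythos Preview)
Your proof is correct and is precisely the natural argument: the two contrapositives reduce immediately to adding or subtracting a function that vanishes on $\mathcal{O}$, and the closure of $V$ under addition and subtraction is exactly what is needed. The paper itself does not spell out a proof for this proposition, instead attributing it to \citet[Proposition~1]{scholl2022symbolicrecoverydifferentialequations} (extended to the multi-dimensional setting); the argument there is the same as yours, so there is no meaningful difference to discuss.
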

\Cref{prop:general-uniqueness} allows us to turn the hard problem of uniqueness into a simpler problem, of establishing when a given trajectory covers a \emph{set of uniqueness} for a given function class, see examples in \Cref{fig:sofu}:
\begin{definition}[Set of Uniqueness]\label{def:Concrete-Def-Uniqueness}
    Let $V$ be any class of functions mapping from $\mathbb{R}^d$ to $\mathbb{R}^d$. A set $A$ is called a \emph{set of uniqueness} for $V$, if for all $ G\in V$, $G|_A=0$ implies $G|_{\mathbb{R}^d}=0$.
\end{definition}
We also introduce a sheaf-theoretic notion of uniqueness, which will allow us to talk about local uniqueness. In principle one can work with locally ringed spaces, but we stick with the concrete example of $\mathbb{R}^d$ equipped with a sheaf of functions $\mathcal{O}$, for example the sheaf of analytic functions. Let $Z$ be an arbitrary subset of $\mathbb{R}^d$. Then we may consider the sheaf of ideals of functions vanishing on $Z$, given explicitly on an open set $U \subseteq \mathbb{R}^d$,
\begin{equation}
    \mathcal{I}_Z(U) = \{f \in \mathcal{O}(U) \mid \forall x \in Z: f(x) = 0 \}.
\end{equation}
A brief introduction to the aspects of sheaf theory relevant to this paper is provided in Appendix~\ref{app:sheaf-theory}.
The question of uniqueness for $Z$ can then be phrased in the following way.

\begin{definition}
    $Z$ is a set of uniqueness if and only if $\mathcal{I}_Z(\mathbb{R}^d) \neq 0$.
\end{definition}
Then this agrees with the previous definition, and will work in the case that $\mathbb{R}^d$ is replaced by any space. 

\begin{definition}
    A set $Z$ is a local set of non-uniqueness at a point $x \in X$ if there is a non-zero global section $f \in \mathcal{O}(\mathbb{R}^d)$ such that $f_x \in \mathcal{I}_x$.
\end{definition}

\subsection{The Continuous Case}
In the continuous case, uniqueness becomes almost trivial, as sets of uniqueness are those dense in the whole space. See e.g. \citet[Theorem 4]{scholl2022symbolicrecoverydifferentialequations}. We provide a proof here for conciseness.
\begin{lemma}\label{prop:cont_uniq}
Let $F:\mathbb{R}^d\rightarrow\mathbb{R}^d$ be a continuous function and $\mathcal{D}\subset\mathbb{R}^d$. Then $F|_\mathcal{D}=0$ implies $F=0$ if and only if $\mathcal{D}$ is dense. 
\end{lemma}
\begin{proof}
     If $\mathcal{D}$ is dense, then using that continuous functions which agree on a dense set agree on the whole space, we get that $F = 0$. 

     Conversely, if $\mathcal{D}$ is not dense, then there must exist an open set $U \subseteq \R^d$ such that $\mathcal{D} \cap U = \varnothing$. Take some open ball $B(x, r) \subseteq U$. Then we can construct a continuous function $F$ with $F|_{\R^d \backslash U} = 0$ but $F|_{B(x,r)} \neq 0$. As an explicit example one can take
     \begin{equation}\label{eq:ContinuousBumpFunction}
         F(y) = \begin{cases}
             1 - \frac{|y - x|}{r} & y \in B(x, r) \\
             0 & y \notin B(x,r)
         \end{cases}
     \end{equation}
\end{proof}
\begin{nb}
    We note that this result can be extended in a similar manner to any class of functions which has bump functions or equivalently partitions of unity. More concretely, by this we mean that for any open set $U$ we may find a non-zero function $F : \mathbb{R}^d \rightarrow \mathbb{R}$ in the given class, such that $F|_{\mathbb{R}^d \backslash U} = 0$. This includes most notably any class of functions containing compactly supported smooth functions, though there are many more examples. Then the same exact argument as in the continuous case gives the desired statement for a general class of functions which has bump functions. \Cref{eq:ContinuousBumpFunction} is a construction of such a bump function for the class $C^0$. For an example of a smooth compactly supported such bump function, see \cite[Lemma 2.22]{Lee2012}.
\end{nb}
\subsection{The Analytic Case}

In the case of real analytic functions, the problem becomes increasingly more complicated. As we discuss below, there exist many sufficient conditions for a set to be a set of uniqueness, of which one will be useful for establishing uniqueness for dynamical systems, like the Lorenz system. To the best of our knowledge there exists no geometric equivalence statement to tell us when a given set is a set of uniqueness, as some pathological examples exist, see e.g. \citet{lebl2021examplecompactnoncanalyticreal}. Choosing $V$ as the set of analytic functions, the following result will give us, similarly to \citet{scholl2022symbolicrecoverydifferentialequations}, a sufficient criterion for uniqueness:
\begin{proposition} \label{prop:measure-condition-for-uniqueness-of-analytic-functions}
    Let $F:\mathbb{R}^d\rightarrow\mathbb{R}^d$ be an analytic function and $\mathcal{D}\subset\mathbb{R}^d$ a set with $\lambda^d(\mathcal{D})>0$, where $\lambda^d$ is the $d$-dimensional Lebesgue-measure. Then $F|_\mathcal{D}=0$ implies $F=0$. I.e., sets with positive Lebesgue measure are sets of analytic uniqueness.
\end{proposition}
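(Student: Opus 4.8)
The plan is to reduce the vector-valued statement to a scalar one and then invoke the classical fact that the zero set of a non-identically-zero real analytic function on a connected open set has Lebesgue measure zero. First I would write $F = (f_1,\dots,f_d)$ with each $f_i : \mathbb{R}^d \to \mathbb{R}$ analytic. Since $F|_{\mathcal{D}} = 0$, each $f_i$ vanishes on $\mathcal{D}$, so its zero set $Z_i = f_i^{-1}(0)$ contains $\mathcal{D}$ and hence has positive Lebesgue measure. It therefore suffices to show: if $f : \mathbb{R}^d \to \mathbb{R}$ is analytic and $\lambda^d(f^{-1}(0)) > 0$, then $f \equiv 0$; applying this to each component yields $F \equiv 0$, which combined with \Cref{prop:general-uniqueness} gives the stated uniqueness.

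For the scalar claim I would argue by induction on $d$. For $d = 1$, if $f \not\equiv 0$ then by the identity theorem on the connected set $\mathbb{R}$ its zeros are isolated, hence countable and of measure zero; contrapositively, $\lambda^1(f^{-1}(0)) > 0$ forces $f \equiv 0$. For the inductive step, first dispose of the locus $S$ of points at which \emph{all} partial derivatives of $f$ vanish: $S$ is closed (an intersection of zero sets of continuous functions) and open (at such a point the full Taylor series vanishes on a neighborhood by analyticity), so by connectedness of $\mathbb{R}^d$ either $S = \mathbb{R}^d$, whence $f \equiv 0$ and we are done, or $S = \varnothing$, i.e.\ $f$ vanishes to finite order at every point. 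In the latter case I would cover $\mathbb{R}^d$ by countably many small neighborhoods; near any point $p$, after a measure-preserving linear change of coordinates making the lowest-degree homogeneous part of the Taylor expansion at $p$ nonvanishing in the $x_d$-direction, Weierstrass preparation writes $f = u \cdot W$ locally, with $u$ a non-vanishing unit and $W(x',x_d)$ a Weierstrass polynomial of degree $m \ge 1$ in $x_d$ with analytic coefficients in $x'$. Then for each fixed $x'$ the slice $\{x_d : f(x',x_d) = 0\}$ has at most $m$ points, so Fubini gives that $f^{-1}(0)$ meets that neighborhood in a null set; summing over the countable cover yields $\lambda^d(f^{-1}(0)) = 0$, contradicting the hypothesis.

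If one prefers to avoid Weierstrass preparation, the inductive step can instead be run directly with Fubini: for $x' \in \mathbb{R}^{d-1}$ set $A = \{x' : f(x',\cdot)\equiv 0\}$; for $x'\notin A$ the one-variable slice is null by the base case, while on any neighborhood where $f$ vanishes to finite order, $A$ is contained in the common zero set of the finitely many analytic Taylor-coefficient functions of $f$ in $x_d$ (not all identically zero there), which is $(d-1)$-dimensionally null by the induction hypothesis; Fubini again gives $\lambda^d(f^{-1}(0)) = 0$.

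The reduction to the scalar case and the $d=1$ base case are routine. The main obstacle is the inductive step: handling points where $f$ vanishes to infinite order (this is precisely where connectedness of the domain is indispensable) and then organizing the finite-order case so that Fubini applies. Invoking Weierstrass preparation is the cleanest way to package the latter, at the cost of a slightly heavier tool; the coefficient-zero-set argument is a more elementary alternative.
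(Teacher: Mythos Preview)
The paper does not supply a proof of this proposition; it is stated as a known result, with the surrounding text pointing to prior work (Scholl et al.). The closest thing in the paper to an argument is the subsequent Dimension Criterion lemma, whose proof cites Mityagin for the fact that the zero set of a non-zero real analytic function has Hausdorff dimension at most $d-1$; since any set of positive $\lambda^d$-measure has Hausdorff dimension $d$, the proposition follows at once from that route.

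Your proof is correct and gives a self-contained argument where the paper simply defers to the literature. The reduction to scalar components, the $d=1$ base case, the clopen argument for the infinite-order locus $S$, and the Weierstrass-preparation/Fubini step are all valid. One small remark on your Fubini alternative: the phrase ``finitely many analytic Taylor-coefficient functions'' is slightly imprecise. Globally you have the infinitely many coefficient functions $a_k(x') = \tfrac{1}{k!}\,\partial_{x_d}^k f(x',0)$, and you only need a single $a_{k_0}\not\equiv 0$ (which exists as soon as $f\not\equiv 0$, since otherwise $f$ and all its $x_d$-derivatives vanish on $\{x_d=0\}$ and analyticity forces $f\equiv 0$) to get $A\subseteq\{a_{k_0}=0\}$, which is $(d{-}1)$-null by the induction hypothesis. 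With that tweak the direct Fubini route is actually the cleanest path and does not require the clopen analysis of $S$ or Weierstrass preparation at all.
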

A simpler version, which directly follows from \Cref{prop:measure-condition-for-uniqueness-of-analytic-functions}, is the well-known Identity Theorem.
\begin{theorem} [Identity Theorem]\label{thm:open-condition-for-uniqueness-of-analytic-functions}
    Let $F:\mathbb{R}^d\rightarrow\mathbb{R}^d$ be an analytic function and $\mathcal{D}\subset\mathbb{R}^d$ is open. Then $F|_\mathcal{D}=0$ implies $F=0$. I.e., sets that contain an open set are sets of analytic uniqueness.
\end{theorem}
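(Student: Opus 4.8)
The statement is billed in the text as an immediate corollary of \Cref{prop:measure-condition-for-uniqueness-of-analytic-functions}, so the plan is essentially a one-line reduction, with an optional self-contained argument for completeness. First I would reduce to the scalar case: writing $F = (F_1,\dots,F_d)$ with each $F_i:\mathbb{R}^d\to\mathbb{R}$ analytic, we have $F|_\mathcal{D}=0$ if and only if $F_i|_\mathcal{D}=0$ for every $i$, so it suffices to prove the claim for a single analytic $F:\mathbb{R}^d\to\mathbb{R}$. Since $\mathcal{D}$ is open and non-empty it contains an open ball, and an open ball has strictly positive $d$-dimensional Lebesgue measure; applying \Cref{prop:measure-condition-for-uniqueness-of-analytic-functions} to this ball yields $F\equiv 0$.

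For a self-contained argument that does not route through the measure-theoretic proposition, I would run the classical connectedness (clopen) argument. Define
\[
S = \{\, x \in \mathbb{R}^d : \partial^\alpha F(x) = 0 \text{ for every multi-index } \alpha \,\}.
\]
The set $S$ is closed, being an intersection of zero sets of the continuous functions $\partial^\alpha F$. It is also open: if $x\in S$, then real analyticity of $F$ provides a ball $B(x,r)$ on which $F$ equals its Taylor series centred at $x$, and this series is identically zero since all its coefficients $\partial^\alpha F(x)/\alpha!$ vanish; hence $F\equiv 0$ on $B(x,r)$, so in particular all derivatives of $F$ vanish throughout $B(x,r)$ and $B(x,r)\subseteq S$. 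Because $F$ vanishes on the open set $\mathcal{D}$, all of its derivatives vanish there too, so $\mathcal{D}\subseteq S$ and $S\neq\varnothing$; connectedness of $\mathbb{R}^d$ then forces $S=\mathbb{R}^d$, whence $F\equiv 0$.

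The only point requiring genuine care — and the reason the statement is not entirely trivial in the real-analytic category — is the openness of $S$: it relies crucially on the fact that a real-analytic function coincides with its Taylor expansion on an honest neighbourhood of each point (the series has positive radius of convergence), which is exactly the definition of real analyticity rather than mere smoothness. Once that is granted, closedness of $S$ and connectedness of $\mathbb{R}^d$ are routine, so there is no substantial obstacle; the shortest path remains the direct appeal to \Cref{prop:measure-condition-for-uniqueness-of-analytic-functions}.
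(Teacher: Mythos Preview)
Your proposal is correct and follows the paper's own route: the paper states the Identity Theorem as an immediate consequence of \Cref{prop:measure-condition-for-uniqueness-of-analytic-functions}, and your one-line reduction via the positive Lebesgue measure of an open ball is exactly that. The additional self-contained clopen argument you supply is also correct and standard, though the paper does not include it.
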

\citet{scholl2022symbolicrecoverydifferentialequations} use this statement to show that uniqueness follows from the image $\mathcal{D}$ containing an open set or, as a consequence, the Jacobian of $g$ (in our case simply $u$) having rank $d$. These statements, however, are not useful in our settings, apart from the case $d=1$. If $d=1$, then $\mathcal{D}\subset\mathbb{R}$ contains an open set if and only if $u$ is not constant, making uniqueness in this case trivial. From here onward we will now assume that $d > 1$.

Let us consider as an example the classical Lorenz system \citep{tucker1999lorenz}. It is a 3-dimensional dynamical system, and both the measure criterion \citep[Corollary 2]{scholl2022symbolicrecoverydifferentialequations} and the Jacobian criterion \citep[Theorem 3]{scholl2022symbolicrecoverydifferentialequations} (since the Jacobian of $u$ needs to have rank 3, but $u$ maps from $\mathbb{R}$ to $\mathbb{R}^3$), do not yield uniqueness. However, since the image of $u$ is not dense in the whole space either (\Cref{prop:cont_uniq}), we also cannot infer uniqueness from this. Thus, a question remains: Is the denseness not only sufficient, but also necessary?
Unfortunately, the answer is no, and the following criterions turn out to be more useful in the context of dynamical systems chaotic on a strange attractor.
\begin{proposition} \label{prop:density-condition-for-uniqueness-of-analytic-functions}
    Let $V$ be the set of analytic functions mapping from $\mathbb{R}^d$ to $\mathbb{R}^d$. Assume there exists $F\in V$ such that $\dot u = F(u)$. Denote with $\mathcal{D}=u(U)$ the image of $u$. Then, $F$ is the unique function in $V$ such that $\dot u = F(u)$ if $\mathcal{D}$ is dense in some analytic set of uniqueness $A\subset \mathbb{R}^d$.   
\end{proposition}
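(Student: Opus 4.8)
The plan is to chain together three facts that are already available in the excerpt. First, by Proposition \ref{prop:general-uniqueness}, $F$ is the unique element of $V$ with $\dot u = F(u)$ if and only if there is no $G \in V \setminus \{0\}$ with $G|_{\mathcal{D}} \equiv 0$. So it suffices to show that $\mathcal{D}$ is a set of uniqueness for the class $V$ of analytic vector fields $\mathbb{R}^d \to \mathbb{R}^d$. Second, I would reduce from vector fields to scalar functions component-wise: $G = (G_1, \dots, G_d)$ vanishes on $\mathcal{D}$ iff each analytic $G_i : \mathbb{R}^d \to \mathbb{R}$ vanishes on $\mathcal{D}$, so it is enough to treat scalar analytic functions. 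Third, take such a scalar $G$ with $G|_{\mathcal{D}} \equiv 0$; since $G$ is in particular continuous and $\mathcal{D}$ is dense in $A$, we get $G|_A \equiv 0$ by continuity (a continuous function vanishing on a dense subset of $A$ vanishes on the closure, which contains $A$). But $A$ is assumed to be an analytic set of uniqueness, so $G \equiv 0$ on all of $\mathbb{R}^d$. Hence no nonzero $G \in V$ vanishes on $\mathcal{D}$, and Proposition \ref{prop:general-uniqueness} gives the claim.

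The only subtlety I would want to be careful about is the direction of the logic in the very last step: ``$\mathcal{D}$ dense in $A$'' means $A \subseteq \operatorname{cl}(\mathcal{D})$ (equivalently $\operatorname{cl}(\mathcal{D}) \supseteq A$, and since $A$ is closed, $\operatorname{cl}_A(\mathcal{D}) = A$). A continuous function vanishing on $\mathcal{D}$ then vanishes on $\operatorname{cl}(\mathcal{D}) \supseteq A$; in particular it vanishes on $A$, which is what is needed to invoke the hypothesis that $A$ is a set of uniqueness. One does not need $\mathcal{D} \subseteq A$ anywhere, nor $\operatorname{cl}(\mathcal{D}) = A$ exactly — only $A \subseteq \operatorname{cl}(\mathcal{D})$ — so the statement is slightly more robust than it might first appear.

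There is essentially no hard obstacle here: the proposition is a packaging result that isolates the genuinely difficult work (showing that a concrete $A$ — e.g. a strange attractor — is an analytic set of uniqueness, and that a trajectory is dense in it) into the two hypotheses, which are discharged elsewhere in the paper. If I wanted to phrase the argument in the sheaf-theoretic language introduced just above, I could instead say: $\mathcal{I}_{\mathcal{D}}(\mathbb{R}^d) = \mathcal{I}_{\operatorname{cl}(\mathcal{D})}(\mathbb{R}^d) \supseteq \mathcal{I}_{A}(\mathbb{R}^d) \neq 0$, where the first equality is continuity of analytic (hence continuous) functions, the inclusion is monotonicity of $\mathcal{I}_{(\cdot)}$ under $A \subseteq \operatorname{cl}(\mathcal{D})$, and the final nonvanishing is the assumption that $A$ is a set of uniqueness; this immediately gives that $\mathcal{D}$ is a set of uniqueness, hence the result by Proposition \ref{prop:general-uniqueness}.
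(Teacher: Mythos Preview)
Your main argument is correct and is exactly the paper's proof, just spelled out in more detail: continuity of analytic functions extends $G|_{\mathcal{D}}=0$ to $G|_A=0$, the hypothesis that $A$ is a set of uniqueness forces $G\equiv 0$, and Proposition~\ref{prop:general-uniqueness} finishes.

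One small slip in your optional sheaf-theoretic aside: the inclusion and the sign are both reversed. Since $A \subseteq \operatorname{cl}(\mathcal{D})$, a function vanishing on $\operatorname{cl}(\mathcal{D})$ in particular vanishes on $A$, so $\mathcal{I}_{\operatorname{cl}(\mathcal{D})}(\mathbb{R}^d) \subseteq \mathcal{I}_A(\mathbb{R}^d)$ (larger vanishing locus, smaller ideal), not $\supseteq$. And ``$A$ is a set of uniqueness'' means $\mathcal{I}_A(\mathbb{R}^d)=0$, not $\neq 0$ (the paper's sheaf-theoretic definition appears to contain a typo here); the correct chain is $\mathcal{I}_{\mathcal{D}}(\mathbb{R}^d)=\mathcal{I}_{\operatorname{cl}(\mathcal{D})}(\mathbb{R}^d)\subseteq \mathcal{I}_A(\mathbb{R}^d)=0$, which indeed gives the conclusion. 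This does not affect the validity of your primary argument.
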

\begin{proof}
 Using the definition of sets of uniqueness and continuity of analytic functions, we get that if a function $G$ vanishes on $\mathcal{D}$, then it also vanishes on $\mathbb{R}^d$. \Cref{prop:general-uniqueness} then directly gives the uniqueness of $F$.    
\end{proof}
\begin{lemma}[Dimension Criterion]\label{lem:haus}
Any set $A$ with $\operatorname{dim}_H A > d-1$ is an analytic set of uniqueness.
\end{lemma}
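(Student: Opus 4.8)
The plan is to show that if $A \subseteq \mathbb{R}^d$ has Hausdorff dimension strictly greater than $d-1$, then any analytic function $G : \mathbb{R}^d \to \mathbb{R}^d$ (equivalently, each scalar component $G_k$) vanishing on $A$ must vanish identically. The natural strategy is by contradiction combined with the structure of zero sets of real analytic functions: assume $G \not\equiv 0$; then its zero set $Z = G^{-1}(0)$ is a proper analytic subset of $\mathbb{R}^d$, and the key fact is that a proper real-analytic subset of $\mathbb{R}^d$ has Hausdorff dimension at most $d-1$. Since $A \subseteq Z$, monotonicity of Hausdorff dimension would then give $\dim_H A \le \dim_H Z \le d-1$, contradicting the hypothesis.

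The first step is to reduce to the scalar case: $G|_A = 0$ means every component $G_k|_A = 0$, so it suffices to prove the claim for a single analytic $g : \mathbb{R}^d \to \mathbb{R}$ with $g|_A = 0$. Then I would invoke the local structure theorem for real-analytic sets: near any point, the zero set of a non-identically-zero analytic function is contained in a finite union of images of analytic maps from lower-dimensional parameter spaces (this follows from the Weierstrass preparation theorem / the fact that a non-trivial analytic set is locally a finite union of analytic submanifolds of dimension $\le d-1$, or one can cite that $Z$ is a semi-analytic — indeed $C$-analytic — set of dimension $< d$, and that the dimension of a semi-analytic set coincides with its Hausdorff dimension). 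The second step is to globalize: $\mathbb{R}^d$ is $\sigma$-compact, so cover it by countably many coordinate charts on each of which $Z$ has dimension $\le d-1$; since Hausdorff dimension is stable under countable unions ($\dim_H \bigcup_i S_i = \sup_i \dim_H S_i$), we conclude $\dim_H Z \le d-1$. The third step is the monotonicity/contradiction conclusion described above.

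The main obstacle is making precise, with a clean citation, the claim that a proper real-analytic (or semi-analytic) subset of $\mathbb{R}^d$ has Hausdorff dimension $\le d-1$. This is where the analytic-geometry machinery from \Cref{sec:background} earns its keep: the relevant statement is that for semi-analytic sets the topological dimension, the Hausdorff dimension, and the dimension as an analytic/subanalytic set all agree, and a proper analytic subset of $\mathbb{R}^d$ has analytic dimension at most $d-1$. One has to be slightly careful because $g$ need not be $C$-analytic globally, but the argument is purely local before the countable-union globalization, and locally $Z$ is genuinely analytic, so semi-analyticity (hence the dimension identities) holds on each chart. A secondary subtlety is that the result asserts $\dim_H A > d-1$, which includes the case $\dim_H A = d$ trivially but more importantly any fractional dimension in $(d-1, d)$; the contradiction argument handles all of these uniformly, since we only ever compare against the integer bound $d-1$. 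Finally I would note the conclusion is sharp: hyperplanes have dimension exactly $d-1$ and are sets of non-uniqueness (they are cut out by a single linear, hence analytic, function), so the strict inequality cannot be relaxed.
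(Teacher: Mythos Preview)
Your proposal is correct and follows essentially the same approach as the paper: argue by contradiction, use that the zero set of a non-trivial real-analytic function has Hausdorff dimension at most $d-1$, and conclude via monotonicity of Hausdorff dimension. The only difference is in how you justify the key dimension bound: the paper simply cites Mityagin's elementary argument based on the implicit function theorem, whereas you sketch a route through the stratification theory of semi-analytic sets and countable-union stability, which is heavier machinery than needed here but perfectly valid.
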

\begin{proof}
This comes directly from the fact that the Hausdorff dimension of the zero set of a real analytic function is less than or equal to $d-1$. Simple proof of this fact can be found in \citep[Proposition 2]{mityagin} using the implicit function theorem. To finish the proof, we simply use that the Hausdorff dimension of a subset is less than or equal to the Hausdorff dimension of the set.
\end{proof}

\subsubsection{Sets of Uniqueness with Extra Information}

In the subsections above, we established that sets of uniqueness for analytic and continuous function spaces are geometrically quite ``large.'' The requirement for system trajectories to sufficiently sample these large sets is the main limitation for system discovery, which we make concrete in the next section. Here, we instead investigate whether incorporating additional information can reduce the required size of the observation set for analytic functions. 

For example, one may know a global implicit constraint on the function space in the form of a conservation law. Can such information enhance discoverability?  The significance of these results will be demonstrated in \Cref{sec:cons_laws}, enabling discoverability of the system in \Cref{eq:simple_eg1}, as formally shown in \Cref{cor:uniq_sho}. Our first result is a direct consequence of the implicit function theorem.

\begin{lemma}[Proof in \Cref{lem:uniq_w_extra_ap}]\label{lem:uniq_w_extra}
Let $F^1, F^2: \Rl^d \to \Rl^d$ be real analytic functions and $H: \Rl^d \times \Rl^d \to \Rl$ be a real analytic function such that $\nabla_vH(F^i(u), u) = 0$ for $i=1,2$ and for all $u \in \Rl^d$. Suppose $F^1$ and $F^2$ agree on a non-empty set $A \subset \Rl^d$ and there exists $p\in A$, such that $\nabla_{v}^2H(F^1(p),p)$ is non-singular. Then $F^1 = F^2$ on all of $\R^d$.
\end{lemma}

\begin{nb}
Unfortunately, the result above is only useful whenever the conservation law itself already restricts the number of possible $F$ to a finite set. Existence of a continuous space of distinct possible solutions for $F$ would necessarily make the Hessian singular. Relaxing the invertibility turns out to be impossible in the general case, even when using geometric information of $A$, as illustrated in \Cref{nb:38counter}. This primarily occurs due to the possibility of bifurcations of possible solutions arising from $H(v,u)$. I.e., if $F$ is not an isolated critical point $\nabla_vH$, then we can move by a small distance from $A$, while keeping the critical point non-isolated, meaning that $F_1$ and $F_2$ can continuously separate and become non equal. However, by restricting the space of vector fields, it turns out to be possible to directly utilize geometric information from $A$, complementing directions explored within $A$ with those restricted by $H$.
\end{nb}

\begin{theorem}[Proof in \Cref{thm:uniq_symmjac_ap}]\label{thm:uniq_symmjac}
Let $F_1, F_2: \R^n \to \R^n$ be two analytic vector fields with symmetric (skew-symmetric) Jacobian, and let $\mathcal{M} \subset \R^n$ be an analytic manifold. Suppose: 
\begin{enumerate}
    \item $F_1(u) = F_2(u)$ for all $u \in \mc{M}$.
    \item There exists an analytic function $H: \R^n \times \R^n \to \R$, s.t., $\nabla_{v} H(F_i(u), u) = 0$, $\forall u \in \R^n$ and $i=1, 2$.
    \item There exists a point $p \in \mc{M}$ such that $\ker(C) \subseteq \TpM$, where $C = \nabla_{v}^2 H(F_1(p), p)$ is the Hessian matrix of $H$ with respect to its first vector argument.
\end{enumerate}
Then, $F_1(u) = F_2(u)$ for all $u \in \R^n$.
\end{theorem}

\begin{nb}
The class of vector fields with symmetric or skew-symmetric Jacobian is not extremely restrictive, as it includes all gradient flow systems, as well as separable Hamiltonian systems.
\end{nb}

\section{Main Results}\label{sec:main}
Having reduced the problem of discoverability to that of sets of uniqueness, we now investigate whether systems trajectories can sufficiently cover such sets, leading to the main results of this paper.
\subsection{The Continuous Case}

The continuous case is again rather simple, as the following can be deduced directly from \Cref{prop:cont_uniq,prop:general-uniqueness}.

\begin{theorem}\label{thm:cont}
Chaotic on the whole domain implies discoverable (from a single trajectory) in the space of continuous functions $C^0$. 
Discoverable in the space of continuous functions from a single trajectory implies topological transitivity.
\end{theorem}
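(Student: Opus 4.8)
The theorem has two directions, and both should follow by combining the reduction to sets of uniqueness (\Cref{prop:general-uniqueness}), the characterization of continuous sets of uniqueness as dense sets (\Cref{prop:cont_uniq}), and Birkhoff's transitivity lemma (\Cref{lem:birk}).

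For the forward direction: suppose $F$ is chaotic on the whole domain $M$, so in particular $\phi_F$ is topologically transitive on $M$. The domain $M$ (a suitable compact set, or $\R^d$ — one must be a little careful about completeness here) is a complete metric space, and $\phi_F$ is a continuous dynamical system since $F$ is locally Lipschitz. Hence \Cref{lem:birk} applies and gives a hypercyclic point $x^*$, i.e. a point whose trajectory $\Orb(x^*, F)$ is dense in $M$. Take $u = \phi_F(x^*, \cdot)$ as the single trajectory; its image $\mathcal{O} = \Orb(x^*, F)$ is dense. Now if $G \in C^0$ vanishes on $\mathcal{O}$, then by \Cref{prop:cont_uniq} (denseness $\Rightarrow$ set of uniqueness for $C^0$) we get $G \equiv 0$; so by \Cref{prop:general-uniqueness} there is no nonzero $G \in C^0$ vanishing on $\mathcal{O}$, and $F$ is the unique element of $C^0$ with $\dot u = F(u)$. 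Thus $F$ is discoverable from a single trajectory in $C^0$.

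For the converse: suppose $F$ is discoverable in $C^0$ from a single trajectory $u$. By \Cref{prop:general-uniqueness}, no nonzero continuous $G$ vanishes on $\mathcal{O} = u(U)$, i.e. $\mathcal{O}$ is a set of uniqueness for $C^0$; by \Cref{prop:cont_uniq} this forces $\mathcal{O}$ to be dense in $\R^d$ (or in $M$). But $\mathcal{O}$ is exactly a trajectory $\Orb(x, F)$ (where $x = u(t_0)$ for some $t_0 \in U$, using that $u$ solves the ODE and uniqueness of solutions), so $\phi_F$ has a dense trajectory, i.e. a hypercyclic point. Applying the ``only if'' direction of \Cref{lem:birk}, the existence of a hypercyclic point implies $\phi_F$ is topologically transitive. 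This gives the claimed implication.

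The main obstacle I expect is purely a matter of domain bookkeeping rather than a deep step: making sure the space on which $\phi_F$ acts is genuinely a complete metric space so that \Cref{lem:birk} applies, and reconciling ``chaotic on the whole domain'' (where the domain is implicitly the state space $M \subseteq \R^d$ on which $F$ is defined and the flow is complete) with the statement that $\mathcal{O}$ is dense ``in $\R^d$'' in \Cref{prop:cont_uniq}. One should either take $M = \R^d$ throughout, or restrict to a compact invariant set and note (as remarked after the flow-domain discussion) that the flow extends to infinite time there; in the converse direction one must also observe that a single trajectory image $u(U)$ for $U \subseteq \R$ open coincides, up to the orbit of a single point, with $\Orb(x,F)$, so that ``dense image of $u$'' and ``hypercyclic point'' are the same condition. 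A second minor point: one should check $u$ can be taken with connected domain so that its image is a single forward (or full) orbit; this is where one invokes uniqueness from Picard–Lindelöf.
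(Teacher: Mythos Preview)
Your proposal is correct and follows the same route the paper has in mind: reduce discoverability to ``image of the trajectory is a set of uniqueness'' via \Cref{prop:general-uniqueness}, identify continuous sets of uniqueness with dense sets via \Cref{prop:cont_uniq}, and link dense trajectories to topological transitivity. One small simplification: in the paper's definition of ``chaotic on an invariant set $\mc{A}$'', condition (b) already \emph{is} the existence of a point $x^*$ with $\operatorname{cl}(\Orb(x^*,F))=\mc{A}$, so in the forward direction you get the hypercyclic point directly from the hypothesis and do not need to invoke \Cref{lem:birk}; Birkhoff is only needed (and only in its easy direction) for the converse, to pass from ``some orbit is dense'' to the open-set formulation of topological transitivity. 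Your remarks on domain bookkeeping are well taken and are exactly the sort of thing the paper leaves implicit.
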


\begin{figure}[t]
    \centering
    \begin{minipage}{0.5\textwidth}
        \includegraphics[width=\linewidth]{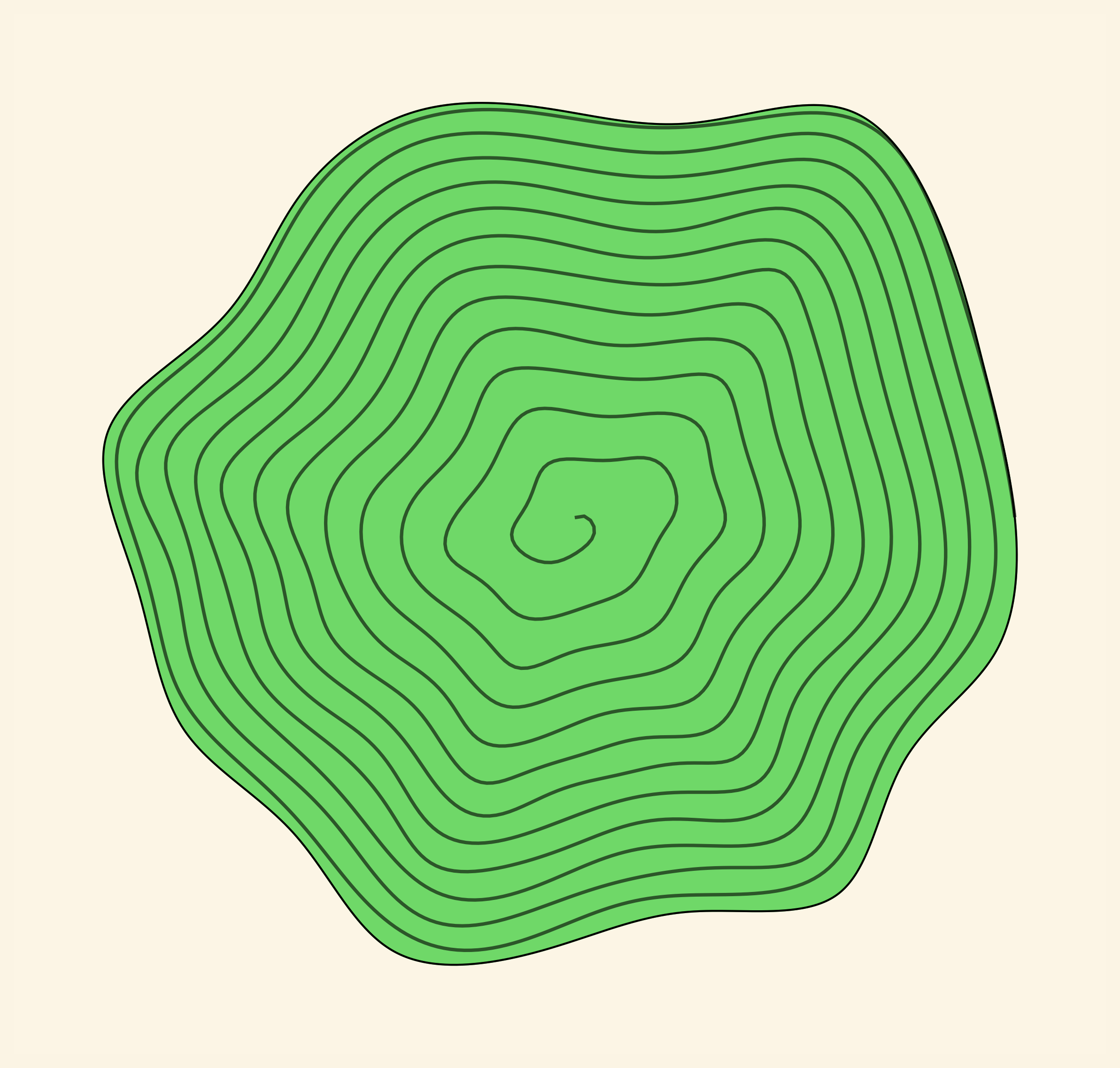}%
    
    \end{minipage}%
    \begin{minipage}{0.5\textwidth}
        \includegraphics[width=\linewidth]{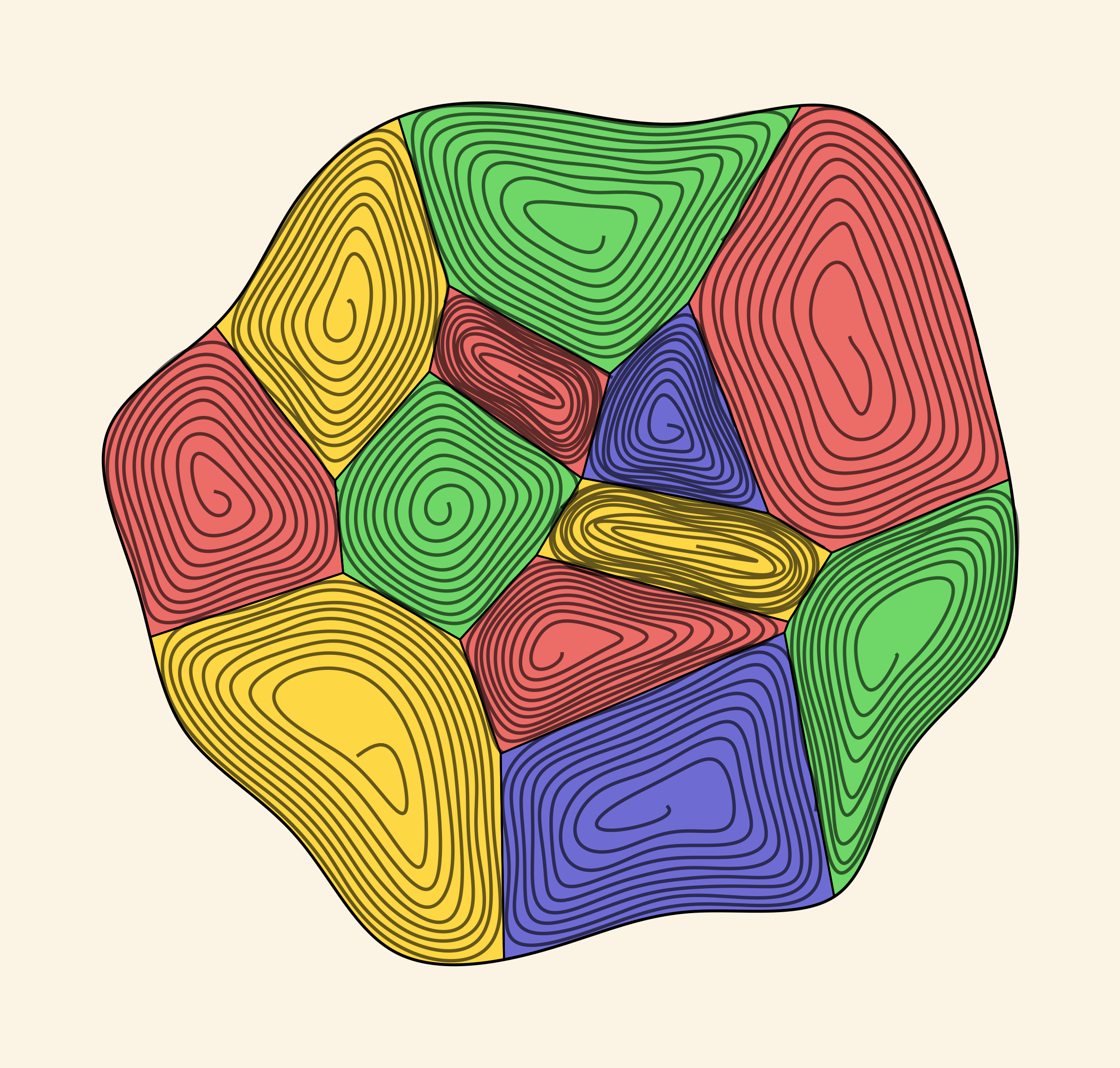}
    \end{minipage}
    \caption{Illustration of finite continuous discoverability being equivalent to a decomposition into cells, on each of which the flow is topologically transitive. In-decomposable systems require $1$ trajectory for discovery, while decomposable ones require $1$ for each cell.}
    \label{fig:cells}
\end{figure}

As it is also possible in practice to have access to multiple trajectories, we investigate that case in \Cref{pro:continuous-discoverable-multiple-trajectories}.

\begin{proposition}[Proof in \Cref{app:cont_disc}]\label{pro:continuous-discoverable-multiple-trajectories}
    If a system is discoverable from $n$ trajectories through $\{x_1, x_2, \dots, x_n\} \subset X$, then the system is either topologically transitive, and only one of the trajectories $x_i$ is needed for discovery, or $X$ can be expressed as the union of $1 < k \le n$ proper, closed, invariant subsets $\{C_{i_1}, \dots, C_{i_k}\}$, where each $C_{i_j} = \overline{\Orb(x_{i_j})}$. The flow restricted to each component is topologically transitive, and only $k$ trajectories are needed for discovery. Visualized in \Cref{fig:cells}.
\end{proposition}
\subsection{The Analytic Case}
The analytic case turns out to be much more difficult than the continuous case, in the same way as in \Cref{sec:sets_of_uniqueness}. In order to establish the impossibility of discoverability, we show that there exists a natural connection between chaos and analytical geometry: all discoverable trajectories must fall into the attracting set. 

As a reminder, we consider dynamical systems in \Cref{eq:basic}, with $F$ an analytic function. Fix an initial point $x_0 \in \mathbb{R}^d$. We take $D$ to be the domain of definition for the trajectory strating from $x_0$. We shall assume that $D = [0, \infty)$, so the trajectory beginning at $x_0$ is defined for all positive time. We shall often use $\gamma$ to refer to both the map and its image as a subset of $\mathbb{R}^d$. Let $Z = \overline{\gamma}$ be the closure of $\gamma$. 

As in previous sections, our main question on the discoverability of the system from the trajectory $\gamma$ can be rephrased as a question about whether $\gamma$ or equivalently $Z$ is a set of uniqueness. This is a highly subtle question in the analytic case. In the appendix \Cref{app:trajectories} we introduce subanalytic geometry, which allows us to come up with conditions for non-uniqueness. We define $N(\gamma)$ as the set of points at which $\gamma$ is non-subanalytic. Based on the following theorem, one can think of it as the set  of ``bad'' points. 

\begin{lemma}[Proof in \Cref{lem:DivergingTrajectoriesNeverDiscoverable}]
    If $N(\gamma) = \varnothing$ then $\gamma$ is not a set of uniqueness.
\end{lemma}

Thus, discoverability of the systems is entirely characterized by what happens near the points in $N$, which turns out to be closely related to the attractor. 

\begin{theorem}[Proof in \Cref{app:attractor}]\label{thm:attractor}
    For any trajectory we have $N(\gamma) \subseteq \lim_{\omega} \gamma$. In particular, if $\gamma$ is contained in a basin of attraction then $N(\gamma)$ is contained in the corresponding attractor. 
\end{theorem}

\begin{corollary}
    Suppose that $\gamma$ is a trajectory with $\lim_{\omega} \gamma$ a set of uniqueness. Then $\gamma$ is a set of uniqueness. 
\end{corollary}

The precise relationship between the attractor and uniqueness is subtle, as it doesn't depend just on $\lim_{\omega} \gamma$ but on the exact way that $\gamma$ approaches its limit cycle locally. This is illustrated by the following simple example of a system which traces out converging spirals.

\begin{example}\label{ex:NonChaoticDiscoverableSystem}
    Consider the system given on $\mathbb{R}^2$ which is given as follows, with $\epsilon < 0$ a small constant
    \begin{equation*}
        \frac{du}{dt} = \begin{pmatrix}
            -u_2 + \epsilon u_1 \\ u_1 + \epsilon u_2
        \end{pmatrix}
    \end{equation*}
    This is analytically solvable, and has solution
    \begin{align*}
        u_1(t) &= e^{\epsilon t}(A \cos t - B \sin t) \\
        u_2(t) &= e^{\epsilon t}(A \sin t + B \cos t)
    \end{align*}
    Here $A, B$ are constants depending on the initial conditions. Then the trajectories of this system spiral inwards infinitely, and all trajectories have $N = \{0 \}$. Indeed, taking $t_n = 2\pi n$ we have
    \begin{equation*}
        u(t_n) = (e^{2\epsilon \pi n} A,  e^{2 \epsilon \pi n} B)
    \end{equation*}
    Taking $n \rightarrow \infty$ and noting that $\epsilon < 0$, one sees that this limit is zero, but zero is a critical point for the system, meaning it lies on its own trajectory. 

    It is clear that this system has no dense trajectories, but we shall now show that the system is uniquely discoverable from any trajectory. By rescaling, we may assume that we take the trajectory which starts at $u = (1, 0)$, which corresponds to a choice of constants $A = 1, B = 0$. Thus the trajectory is given by
    \begin{equation*}
        u(t) = (e^{\epsilon t} \cos t, e^{\epsilon t} \sin t)
    \end{equation*}
    Let $\gamma = \operatorname{img} u$. Suppose $f : \mathbb{R}^2 \rightarrow \mathbb{R}$ is real-analytic and satisfies $f|_{\gamma} = 0$. Consider a straight line through the origin of $\mathbb{R}^2$, which we call $\ell$. Then $f|_\ell$ is a one-dimensional analytic function, which necessarily has zeroes on $\ell \cap \gamma$. But $\ell \cap \gamma$ has an accumulation point at $0$. Hence $f|_\ell = 0$. As this holds for any straight line through the origin, we see that $f = 0$. Hence the system is uniquely discoverable from a single trajectory, even though its trajectories are not dense. 
\end{example}

\begin{lemma}[Proof in \Cref{lem:DivergingTrajectoriesNeverDiscoverable}]
    A system is never discoverable from any trajectory with $\lim_\omega \gamma \subseteq \gamma$. Here we include the case that $\lim_\omega \gamma$ is empty which is the case of a diverging trajectory.
\end{lemma}

This shows that what we really need to consider are non-closed trajectories $\gamma$ and their limits $\lim_{\omega} \gamma$. We shall show that when these limits are sets of uniqueness then all the trajectories limiting to them are as well. To obtain the converse we need to introduce a final concept, that of a generalized Poincare section. We suppose that $\gamma$ is a trajectory with limit $\Omega = \lim_{\omega} \gamma$, which we shall assume does not intersect $\gamma$. Then $\Omega$ is a generalized limit cycle. A generalized Poincare section is a codimension one smooth analytic hypersurface $H = V(g)$ for $g$ a non-zero globally analytic function, such that $\Omega \cap H \neq \varnothing$ and in some neighborhood of $\Omega \cap H$, the $\gamma$ intersects $H$ transversely. By this we mean explicitly that for a intersection point $p \in \gamma \cap H$ with $p = \gamma(t)$ we have that
\begin{equation*}
    \langle \gamma'(t)\rangle \oplus T_pH = T_p\mathbb{R}^d
\end{equation*}

First we show that as long as $\Omega$ is not a fixed point, a generalized Poincare section always exists.
\begin{proposition}[Proof in \Cref{app:GenerlizedPoincareSectionsExist}]\label{prop:GenerlizedPoincareSectionsExist}
    Assume that $\Omega$ is not a point. Then a generalized Poincare section always exists.
\end{proposition}

Finally, using the theory of generalized Poincare sections, we are able to show that $\gamma$ is never subanalytic.
\begin{proposition}[Proof in \Cref{app:nonsubanal}]
    Assume that $\Omega$ is not a point. Then $\gamma$ is not subanalytic.
\end{proposition}

\begin{theorem}\label{thm:main_anal}

If there exists a generalized limit cycle $\Omega$ which is a set of uniqueness then the system is discoverable from any trajectory $\gamma$ such that $\lim_{\omega} \gamma = \overline{\Omega}$. For a chaotic system we in particular have that if an attractor is a set of uniqueness (e.g. if the attractor has Hausdorff dimension greater than $d - 1$), then the system is discoverable.

Conversely, if none of the trajectories have generalized Poincare sections, the system is not discoverable from any finite number of trajectories.

\end{theorem}

\begin{proof}
    Suppose $\Omega$ is a generalized limit cycle which is a set of uniqueness. Let $\gamma$ be a trajectory such that $\lim_{\omega} \gamma$. Then we know by definition $\Omega \subseteq \overline{\gamma}$. Then if there was a non-zero analytic function $F$ such that $\gamma \subseteq F^{-1}(0)$, then we would necessarily have $\Omega \subseteq F^{-1}(0)$ as well. But this would contradict $\Omega$ being a set of uniqueness. Hence $\gamma$ must be a set of uniqueness. 

    The specialization of this statement to chaotic systems is a simply renaming of terms, as generalized limit cycles will be the attractors in the chaotic case. The statement on Hausdorff dimensions follows then from \Cref{lem:haus} as the zero-set of a non-zero analytic function has Hausdorff dimension at most $d-1$.
\end{proof}

\begin{corollary}\label{cor:lorenz}
    The classical Lorentz attractor is analytically discoverable. 
\end{corollary}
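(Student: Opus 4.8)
The plan is to read off \Cref{cor:lorenz} directly from \Cref{thm:main_anal}: one only needs to check that the maximal attractor of the classical Lorenz system is a chaotic attractor whose Hausdorff dimension exceeds $d-1 = 2$. Throughout, $d = 3$ and $F(x,y,z) = \bigl(\sigma(y-x),\, x(\rho - z) - y,\, xy - \beta z\bigr)$ with the classical parameters $\sigma = 10$, $\rho = 28$, $\beta = 8/3$; this $F$ is polynomial, hence real-analytic on $\R^3$, and its flow $\phi^t$ is real-analytic in $(x_0,t)$.

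First I would recall the classical fact that the Lorenz field admits a bounded ellipsoidal trapping region $U$, i.e. $\operatorname{cl}(\phi^t(U)) \subset \operatorname{int}(U)$ for all $t > 0$; thus every forward orbit is complete and $A := \bigcap_{t \ge 0}\phi^t(U)$ is a compact invariant attracting set. Next I would invoke the computer-assisted theorem of \citet{tucker1999lorenz}: for the classical parameters the flow on $A$ is singular-hyperbolic, topologically transitive (it possesses a dense orbit), and exhibits sensitive dependence on initial conditions. Topological transitivity rules out any proper nonempty sub-attracting set, so $A$ is an attractor in the sense of \Cref{def:attractor}, and together with sensitivity the flow is chaotic on $A$.

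It then remains to show $A$ is a set of uniqueness for real-analytic functions, and the Dimension Criterion reduces this to the strict inequality $\operatorname{dim}_H A > 2$. I would establish this from the geometric structure of the Lorenz attractor: on a Poincar\'e cross-section the first-return map possesses a strong stable foliation contracted at a definite rate whose quotient is a piecewise-expanding one-dimensional (Lorenz) map, so locally $A$ is homeomorphic to the product of a two-dimensional unstable disc with a transverse Cantor set $K$, giving $\operatorname{dim}_H A = 2 + \operatorname{dim}_H K$ with $\operatorname{dim}_H K > 0$ (consistent with the Lyapunov/Hausdorff dimension of the classical attractor being $\approx 2.06$). Hence $\operatorname{dim}_H A > 2$, so $A$ is an analytic set of uniqueness, and by Birkhoff's \Cref{lem:birk} there is an orbit $\gamma \subset A$ dense in $A$; since the parametrization $t \mapsto \phi^t(x_0)$ (with image $\gamma$) is real-analytic and defined on $(0,\infty)$, \Cref{thm:main_anal} (equivalently \Cref{prop:density-condition-for-uniqueness-of-analytic-functions}) yields that $F$ is the unique analytic vector field admitting this curve as a solution; that is, the classical Lorenz system is analytically discoverable from the single trajectory $\gamma$.

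The one genuinely hard point is the strict inequality $\operatorname{dim}_H A > 2$: the Kaplan--Yorke value $\approx 2.06$ is only a heuristic proxy for the Hausdorff dimension, so the argument must instead rest on the rigorous transverse Cantor structure of the classical attractor and a positive lower bound for the dimension of that Cantor set --- the sole place where we lean on the dynamical-systems literature rather than on the machinery developed in this paper. The remaining ingredients (existence of the trapping ellipsoid, the upgrade from attracting set to genuine attractor via transitivity, and the analyticity and completeness of the witnessing orbit) are routine.
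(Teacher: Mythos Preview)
Your approach is essentially the same as the paper's: both apply \Cref{thm:main_anal} via Tucker's computer-assisted proof that the classical Lorenz flow realizes the geometric model and is chaotic on its attractor, reducing everything to the strict inequality $\dim_H A > 2$. The only substantive difference is that where you sketch a product-structure heuristic and defer to ``the dynamical-systems literature,'' the paper cites \citet{moreira2020hausdorff} for a rigorous proof that the geometric Lorenz attractor has Hausdorff dimension strictly greater than $2$ and then transfers this via Tucker; your formula $\dim_H A = 2 + \dim_H K$ is not automatic (Hausdorff dimension is not additive under products in general, and the local product charts here are only H\"older), so that reference is precisely what closes the gap you correctly identified.
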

\begin{proof}
There exist many computational tools to estimate the Hausdorff dimension of the Lorenz attractor, estimating it to be $2.06\pm0.01$, and thus within the applicability range of \Cref{thm:main_anal}. However, analytical results are quite scarce. Generally speaking, proving anything directly for the Lorenz system is a rather complicated task, and most of the literature instead focuses on the so-called geometric Lorenz flow \citep{araujo2010three, galatolo2009lorenzlikeflowsexponential}. Luckily, for the geometric Lorenz flow, it was recently shown that the Hausdorff dimension is strictly larger than $2$ \citep{moreira2020hausdorff}.

With this, we simply need to establish the same for the original Lorenz attractor, which holds as \citet{tucker1999lorenz,tucker2002rigorous} proved that the geometric model is valid, so the Lorenz equations define a geometric Lorenz
flow. See e.g. discussion in \citet{luzzatto2005lorenz,pinsky2023analytical}. Thus, we can apply \Cref{thm:main_anal}, resulting in discoverability.
\end{proof}
\begin{nb}\label{nb:chaos}
    \Cref{thm:main_anal} suggests that many known chaotic systems, like the Rossler system \citep{rossler1976equation} ($D \approx 2.01$ \citep{froehling1981determining}), multi-scroll system (or hyperchaotic Chua's circuit) \citep{chua2003double} ($D \approx 2.13$ \citep{1085791}), Chen and L\"u systems \citep{lu2002new} ($D \approx 2.18$ and $D \approx 2.06$ \citep{chen2013nonequivalence,leonov2015estimation}), hyperchaotic Rossler \citep{rossler1979equation} ($D \approx 3.006$ \citep{froehling1981determining}), Rabinovich–Fabrikant system \citep{rabinovich1979stochastic} ($D \approx 2.19$ \citep{grassberger1983measuring}) are in fact discoverable. However, establishing such a fact rigorously is difficult, as it is necessary to both establish existence of an attractor, and provide exact lower bounds on the dimension of it. As noted by \citet{moreira2020hausdorff}, such results are scarce in the literature. 
\end{nb}

\section{On Full Well-Posedness}\label{sec:stability}
Having established the discoverability of chaotic systems, we now address the second critical component of well-posedness: stability. Specifically, we investigate the behavior of the reconstruction when we do not observe the ground-truth trajectory, but rather a noisy counterpart $y(t) = x(t) + \varepsilon(t)$. To analyze this mathematically, we must specify an estimation procedure; throughout this section, our focus will be on ordinary least squares (OLS).

For finite-dimensional function spaces, it turns out that stability and convergence under noise can be established without any explicit regularization. In such settings, the Gram matrix is strictly invertible, meaning the problem is naturally well-posed. While analogous stability and convergence results can be derived for Hilbert spaces, the infinite-dimensional case fundamentally requires regularization; we provide that analysis in \Cref{app:infinite_stability}. The natural well-posedness in finite dimensions hinges directly on our prior geometric results, formalized as follows.
\begin{lemma}[Well-Posedness of the Gram Matrix for Sets of Uniqueness; Proof in \Cref{lem:gram_ap}]\label{lem:gram}
Consider $\Omega \subseteq \R^n$ state space, let $\mu$ be a probability measure with support $\Lambda = \mathrm{supp}(\mu)\subseteq\Omega$ and let $\mathcal{H}_p$ be a $p$-dimensional vector space of continuous functions on $\Omega$ with basis $\Phi(x) = [\phi_1(x), \dots, \phi_p(x)]^\top$. Either assuming $\phi_i(x)$ are square integrable, or $\Omega$ compact, define the Gram matrix $G_\mu \in \R^{p \times p}$ as:
\begin{equation}
    G_\mu = \int_\Lambda \Phi(x)\Phi(x)^\top d\mu(x).
\end{equation}
Then, $\Lambda$ is a set of uniqueness for $\mathcal{H}_p$ if and only if $G_\mu$ is strictly positive definite $(\sigma_{\min}(G_\mu) > 0)$.
\end{lemma}

While \Cref{lem:gram} ensures the theoretical invertibility of the exact Gram matrix, proving finite-time stability bounds requires us to bound the convergence rate of the empirical integrals. Because Birkhoff's ergodic theorem provides only an asymptotic limit, it is insufficient for finite-time analysis. Therefore, we must constrain our scope from general chaotic systems to those with a quantifiable rate of mixing. Specifically, we restrict our attention to systems admitting an SRB measure \citep{young2002srb} with exponential decay of correlations: a property exhibited, for example, by Anosov flows \citep{sinai2020markov, bowen1975ergodic, bowen1974some}.

\begin{assumption}[Exponential Mixing]\label{ass:mixing}
Assume our domain $X \subseteq \mathbb{R}^n$ is compact, and the true system $\dot{u}=F(u)$ admits 
an ergodic, invariant SRB measure $\mu$ supported on a chaotic attractor $\Lambda$. We will assume that the system is exponentially mixing, i.e. there exist universal system constants $K_0 > 0$ and $\gamma > 0$ such that for any Lipschitz continuous maps $A, B : \Lambda \to \mathbb{R}^{p\times p}$, the auto-covariance satisfies:   
\begin{equation}
\left\| \int_\Lambda A(x)B(\phi^\tau(x))d\mu(x) - \int_\Lambda A(x)d\mu(x) \int_\Lambda B(x)d\mu(x) \right\|_2 \leq K_0 \|A\|_{\mathrm{Lip}} \|B\|_{\mathrm{Lip}} e^{-\gamma |\tau|}.
\end{equation}
\end{assumption}

Under these assumptions, it turns out to be possible to show explicit stability bound for the OLS solution, which guarantees $\hat{F}\to F$ $\mu$, a.s. as $T\to\infty$ and noise goes to $0$.
\begin{theorem}[Finite-Time Stability; Proof in \Cref{thm:stability_ap}]
\label{thm:stability_intrinsic}
Assume the true dynamics $\dot{x} = F(x)$ belong to a $p$-dimensional vector space $\mathcal{H}_p$ of continuously differentiable functions on $\Omega$, generating a chaotic trajectory with invariant measure $\mu$ supported on a set of uniqueness $\Lambda$. Assume noisy observations $y(t) = x(t) + \varepsilon(t)$ with bounded noise $\|\varepsilon(t)\|_2 \leq \delta$ and $\|\dot{\varepsilon}(t)\|_2 \leq \delta_v$. Define the intrinsic geometric constants of the hypothesis space over $\mu$:
    \begin{equation}
    C_\infty = \sup_{h \in \mathcal{H}_p} \frac{\|h\|_{L^\infty(\Omega)}}{\|h\|_{L^2(\mu)}}, \qquad C_{\mathrm{Lip}} = \sup_{h \in \mathcal{H}_p} \frac{\|\nabla h\|_{L^\infty(\Omega)}}{\|h\|_{L^2(\mu)}}.
\end{equation}

Then, for noise below the invertibility threshold $\delta < \frac{1}{4 C_\infty C_{\mathrm{Lip}}}$, and a sufficiently large observation window 
\begin{equation}
    T \ge \mathcal{O}\left( \frac{C_\infty^2 C_{\mathrm{Lip}}^2 p K_0}{\gamma \eta (1 - 4 C_\infty C_{\mathrm{Lip}} \delta)^2} \right),
\end{equation}
with probability at least $1 - \eta$, the ordinary least squares estimator $\hat{F}_T$ satisfies:
\begin{equation}
    \|\hat{F}_T - F\|_{L^2(\mu)} \le 2 C_\infty \|F\|_{L^2(\mu)} \left( \frac{\delta_v}{\|F\|_{L^2(\mu)}} + C_{\mathrm{Lip}}\left[3\delta + 4\sqrt{\frac{2 p K_0}{\gamma \eta T}}\right] \right).
\end{equation}
\end{theorem}

\section{On Conservation Laws}\label{sec:cons_laws}
The main theorems from the previous section highlight that there are significant barriers to discoverability, begging the question: is it impossible to discover non-chaotic systems? 

Luckily, oftentimes we possess information about the system that goes beyond simply the trajectories. We have access to conservation laws, oftentimes represented via equalities $H(\dot{u},u) = 0$. Two particular edge cases of interest that arise are when the laws are completely determining $\big(H:= \sum_i\left(\dot{u}_i-F_i(u)\right)^2\big)$ and completely useless $\big(H:=0\big)$ for determining dynamics of the system. In general, conservation laws are going to live somewhere in between \citep{cory2024evolving,srivastava2025ai}. This raises the following question: when can they provably be useful? In this section, we restrict our attention to the setting of analytic discovery. 

\subsection{First integrals}
We first begin by showing that the existence of first integrals, i.e., conservation laws without dependence on derivatives with $H(u,\dot u) = G(u)$, turns out to be detrimental for discovery, illustrated in \Cref{fig:firstintegral_indiscovery}.

\begin{theorem}[Proof in \Cref{app:first_integrals}]\label{thm:first_integral}
If an analytic vector field $F$ admits a non-constant global analytic first integral $G$, then no trajectory of $F$ is a set of uniqueness, and thus $F$ is not analytically discoverable.
\end{theorem}

\begin{nb}
Now this may seem contradictory, as this would imply that there can be no chaotic systems with first integrals, to which one could bring up, e.g., Henon-Heiles or Double Pendulum as counterexamples. However, both of these systems are \emph{not chaotic} in the position-momentum variables. Instead, they are only chaotic in certain cross-sections. 
\end{nb}

\subsection{Conservation laws recovering uniqueness}
The theorem in the previous subsection demonstrates that constraints imposed by first integrals effectively confine system trajectories to a lower-dimensional manifold (see \Cref{fig:firstintegral_indiscovery} for a visualization), thereby preventing discoverability due to insufficient state-space exploration. This raises the question of whether incorporating these integrals as prior knowledge could overcome this limitation. Using the results from \Cref{lem:uniq_w_extra} and \Cref{thm:uniq_symmjac}, we show that leveraging this information can make discovery of even non-chaotic systems possible. 
\begin{theorem}%
\label{thm:full_uniq_symmjac}
An analytic system $F$, admitting a conservation law $H$ with $\nabla_{1} H(\dot{u}, u) = 0$ for any $u$ solution, is  discoverable within the space of vector fields with symmetric (skew-symmetric) Jacobian if for some trajectory $\gamma$, there exists an analytic submanifold $\mc{S}$ and a point $p\in\mathcal{S}\subseteq\operatorname{cl}\gamma$, such that $\ker{\nabla_{1}^2 H(F(p), p)} \subseteq \Tp{\mc{S}}$.
\end{theorem}
\begin{proof}
The proof is the direct application of \Cref{thm:uniq_symmjac}. 
\end{proof}
\begin{nb}
    The statement appearing here is less general than \Cref{thm:uniq_symmjac}, however it appears to be most useful in this context. In particular, if we are considering an orbit $\gamma$ covering an attractor, we can pick the manifold to be the integral surface of the non-contracting directions (unstable and flow directions) of the dynamics, resulting in tangent spaces with dimension larger than $1$.
\end{nb}

\begin{corollary}[Proof in \Cref{ap:corollaries}]\label{cor:uniq_sho}
    The harmonic oscillator of \Cref{eq:simple_eg1} is discoverable from any single trajectory away from the origin, given the conservation law 
    $H = \frac12(x\dot{x}+y\dot{y})^2+ \frac12(\dot{x}^2 + \dot{y}^2 - x^2 - y^2)^2$.
\end{corollary}
\begin{nb}
    Note that the classical Hamiltonian or its derivatives themselves would not be satisfactory in this case. Intuitively, this arises because the same periodic trajectory can be achieved via a radius-dependent angular speed as in \Cref{eq:simple_eg2}. Thus, in order to achieve uniqueness, one requires specifying the speed itself, which the $H$ above does. 
\end{nb}

\begin{corollary}\label{cor:uniq_sho_2}
    The harmonic oscillator of \Cref{eq:simple_eg1} is discoverable within the space of vector fields with skew-symmetric Jacobians from any single trajectory away from the origin, given the conservation law 
    $H = \frac12(x\dot{x}+y\dot{y})^2$
\end{corollary}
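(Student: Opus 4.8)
The plan is to derive this from \Cref{thm:uniq_symmjac} in its skew-symmetric incarnation, with the data $F_1(x,y) = (y,-x)$ (the field of \Cref{eq:simple_eg1}, whose Jacobian is the constant skew matrix $\left(\begin{smallmatrix}0&1\\-1&0\end{smallmatrix}\right)$), $F_2$ an arbitrary competing admissible field, $\mathcal{M}$ the circular orbit through the chosen initial point, and $H(v,u) = \tfrac12(u\cdot v)^2$. Concretely, the admissible class is $V = \{\text{analytic }F\text{ with skew-symmetric Jacobian and }\nabla_v H(F(u),u)\equiv 0\}$; since the constraint $\nabla_v H(F(u),u) = (u\cdot F(u))\,u = 0$ amounts, away from the origin, to the \emph{linear} condition $u\cdot F(u)\equiv 0$, this $V$ is closed under subtraction, so \Cref{prop:general-uniqueness} applies and reduces the corollary to: any $G\in V$ vanishing on the orbit circle vanishes identically, equivalently any admissible $F_2$ agreeing with $F_1$ on that circle equals $F_1$ on $\R^2$.

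Next I would check the three hypotheses of \Cref{thm:uniq_symmjac}. Hypothesis 1 (agreement on $\mathcal{M}$) is exactly what we are assuming; $\mathcal{M} = \{\,u : \|u\| = \|p\|\,\}$ is an analytic $1$-manifold and contains the orbit since the trajectory through a point $p\neq 0$ of \Cref{eq:simple_eg1} is precisely that circle (using $u\cdot F_1(u)=0$ and $\|F_1(u)\|=\|u\|$). Hypothesis 2: $\nabla_v H(v,u) = (u\cdot v)u$, and membership in $V$ forces $u\cdot F_i(u)\equiv 0$ for $i=1,2$, whence $\nabla_v H(F_i(u),u)\equiv 0$. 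Hypothesis 3 is the only computation: $\nabla_v^2 H(v,u) = uu^\top$ independently of $v$, so $C = \nabla_v^2 H(F_1(p),p) = pp^\top$, and in $\R^2$ this has $\ker C = p^{\perp}$, the line orthogonal to $p$; but the tangent line $T_p\mathcal{M}$ to the origin-centered circle at $p$ is also $p^{\perp}$, so $\ker C = T_p\mathcal{M}$ and Hypothesis 3 holds at every $p\in\mathcal{M}$ (all of which lie away from the origin by assumption). \Cref{thm:uniq_symmjac} then gives $F_1\equiv F_2$, hence uniqueness, hence — by \Cref{prop:general-uniqueness} — discoverability from that single trajectory.

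The point to emphasise, and the reason \Cref{lem:uniq_w_extra} does not suffice here (in contrast to \Cref{cor:uniq_sho}), is that this simpler $H$ has a rank-deficient Hessian $pp^\top$, so the bare implicit-function-theorem argument collapses; what rescues the statement is that the single missing kernel direction $p^\perp$ is precisely the tangent direction to the orbit, so the information ``$F_1=F_2$ along the circle'' already covers it, and the skew-symmetry of the Jacobian closes the inductive tensor argument inside the proof of \Cref{thm:uniq_symmjac} (cf.\ \Cref{nb:38counter} for why nilpotency, not symmetry, is the real subtlety). I expect the main obstacle to be purely expository rather than mathematical: one must pin down that ``given the conservation law'' is to be read as the global pointwise identity $u\cdot F(u)=0$ on all of $\R^2$ (exactly as in \Cref{cor:uniq_sho}), not merely along a single orbit — it is this global reading that makes Hypothesis 2 available for every competitor $F_2$, and hence makes the restricted function class the correct object to feed into \Cref{prop:general-uniqueness}.
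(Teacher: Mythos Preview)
Your proposal is correct and follows essentially the same route as the paper: both verify the three hypotheses of \Cref{thm:uniq_symmjac} by computing $\nabla_v H(v,u)=(u\cdot v)u$, $C=\nabla_v^2 H = uu^\top$ (the paper uses the normalisation $H=(u^\top v)^2$, giving $2uu^\top$, but this is immaterial), and observing that $\ker C = p^\perp = T_p\mathcal{M}$ for the circular orbit. Your write-up is in fact slightly more complete than the paper's own proof, since you treat an arbitrary orbit point $p$ rather than the specific choice $p=(1,0)$, and you make explicit why the constrained class $V$ is closed under subtraction so that \Cref{prop:general-uniqueness} is available.
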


The results above constructively show that in order to be able to discover non-chaotic systems, it becomes necessary to have a-priori knowledge in the form of conservation laws. 

\section{Discusssion}\label{sec:discussion}
\paragraph{Broader Implications:}\label{para:broad}
The findings of this paper provide a rigorous mathematical foundation that helps explain in part some of the successes and failures of machine learning applications across different scientific fields. The work suggests that the inherent nature of a physical system, %
specifically, whether it is chaotic or not, %
is a critical factor in determining the potential success of purely data-driven modeling approaches.

One of the most notable successes of large-scale machine learning in dynamics prediction has been in \emph{climate and weather modeling} and \emph{computational fluid dynamics}, domains governed by inherently chaotic systems \citep{bi2022pangu, pathak2022fourcastnet, price2025gencast, kochkov2021machine,vinuesa2022enhancing,Kutz_2017}. Our results may provide a theoretical explanation for this success: the chaotic nature of these systems ensures their dynamics are, in principle, \emph{discoverable from observational data}. Their trajectories so thoroughly explore the state space that they uniquely specify the underlying governing equations, which in turn promotes generalization. 

It would be unscientific to claim that this is the only reason. Success of data-driven methods in a field are often decided by many factors, like data availability, access to classical modeling tools, domain knowledge availability and many others \citep{raghu2020survey,uren2020critical,roscher2020explainable}. We hypothesize however that the high informational content embedded within the trajectories of chaotic systems is one of the primary drivers in the context of dynamics discovery. The importance of this factor is starkly illustrated by the challenges these methods face in engineering domains where chaos is deliberately avoided.

For instance, in the development of digital twins, systems are designed for stability and predictability\textemdash{}the very opposite of chaos \citep{wagg2025philosophical,eftimie2023digital}. According to our findings, the data from such non-chaotic systems is insufficient to uniquely identify the governing laws. A machine learning or symbolic model may perfectly fit the available data but fail to generalize, as it represents only one of many possible models consistent with the limited observations. This reveals a critical conundrum: the systems most amenable to discovery are often the least desirable for control and engineering applications. Our research, therefore, advocates for a re-evaluation of the assumptions that underpin the field of scientific machine learning. Failure of proliferation of data-driven methods for some problems may not be due to methodological shortcomings, but stem from attempting to solve a fundamentally ill-posed problem.

The path forward, however, is not to abandon data-driven methods. Instead, we must recognize their limits and move towards hybrid, knowledge-informed data-driven approaches. Integrating known physical priors, such as conservation laws, is not merely a way to improve data efficiency \citep{brehmer2025does,batzner20223}, but is a mathematical necessity for recovering well-posedness in discovery of non-chaotic systems.

\paragraph{Discoverability in practice vs theory:}\label{para:practice}
The central finding of this work is theoretical: within the space of continuous or analytic functions, chaos is almost necessary for provable discoverability. This contrasts with some practical applications, where algorithms can be successful in discovering certain non-chaotic systems by operating on a restricted, rather than infinite-dimensional, search space or at the expense of not generalizing to all possible systems, e.g. by enforcing sparsity. 

The claim of this paper is not to say that non-chaotic systems can never be discovered in practical scenarios. Instead, we argue that discoverability of a system is best understood from a point of view of the informational content inside their trajectories. At one extreme, a trajectory from a stationary point provides no information for system discovery. At the other, only chaotic systems admit trajectories containing sufficient information for exact reconstruction of the system.

While not established here formally, even in systems that are not perfectly or exactly chaotic, the same general principle holds true. In such ``approximately chaotic'' systems,  trajectories offer substantially more informational value by exploring a greater volume of the state space. This concept underpins existing heuristics that improve discovery algorithm performance by maximizing state-space coverage \citep{schaeffer2018extractingsparsehighdimensionaldynamics, Wu_2019}. Ultimately, this study extends the rigorous study of system identifiability from finite-dimensional parameter spaces \citep{stanhope2014identifiability} to the broader and more complex setting of infinite-dimensional function spaces.

\paragraph{Are neural operators really about discovery?} \label{para:nos}
A critical consideration is the relevance of our findings for black-box models, such as neural operators (NOs). For symbolic discovery, success is clear-cut\textemdash{}recovering the expression $(x^2+y^2)\cdot x\cdot y$ is demonstrably different from recovering $x\cdot y$.  Evaluating ``discovery'' in black-box models is less straightforward. In fact, for such models, discoverability is synonymous with an upper bound on generalization ability. A model trained on a non-discoverable system can generically exhibit limited generalization. Conversely, for discoverable systems, sufficiently expressive models \emph{must} generalize due to uniqueness. And recent evidence suggests that applying NOs to chaotic systems can indeed lead to unexpected generalization \citep{shokar2025deeplearningevolutionoperator}, highlighting the need to pre-train foundational models on datasets with chaotic dynamics \citep{herde2024poseidon}.

This perspective helps reconcile the practical success of NOs, which are often trained successfully on large datasets of non-chaotic systems. While our theory proves that a finite number of trajectories is insufficient for full discovery, the success of these models is often quantified distributionally. That is, they perform well on test data similar to their training data, without the expectation of true, out-of-distribution generalization.

\paragraph{Extensions to PDEs and higher order ODEs:} \label{para:pdes}
Nevertheless, there are formal mismatches to consider between our analysis and the typical application of operator learning. Our framework is developed for finite-dimensional dynamical systems, whereas operator learning addresses function-to-function mappings, most often in the context of PDEs. Despite this distinction, the core theoretical principle presented here\textemdash{}that the state space coverage of trajectories dictates discoverability\textemdash{}is expected to hold for higher-order ODEs and PDEs. The primary challenge in extending this work lies in the formulation of chaos, which becomes significantly more complex in these settings. A rigorous extension would necessitate combining the topological mixing properties inherent to chaotic PDEs with a formal well-posedness framework, such as that developed by \citep{scholl2023uniqueness,hauger2024robustidentifiabilitysymbolicrecovery}, to make these arguments concrete.

\paragraph{Towards better stability:} 
While the deterministic bounds established in \Cref{sec:stability,app:infinite_stability} confirm that system discovery can be well-posed, they represent only a first step. In practice, the dynamical systems we aim to discover often belong to highly structured physical families. Although this structure is typically enforced through explicit prior-based regularization, conventional regularization techniques frequently lack physical motivation, making the solution operator suboptimal. Consequently, a far more compelling frontier lies in adopting a Bayesian framework \citep{latz2023bayesian, bryutkin2025canonicalbayesianlinearidentification}. Rather than relying on rigid, non-probabilistic worst-case bounds, the natural progression of this work is to translate exact geometric and physical constraints into structural Bayesian priors.

\section*{Acknowledgments}
Authors would like to thank Olga Obolenets, Lucy Richman, Sanjeeb Dash and Karan Srivastava for useful discussions and feedback.

\bibliographystyle{plainnat}
\bibliography{references}  
\newpage
\appendix
\appendixpage

\startcontents[sections]
\printcontents[sections]{l}{1}{\setcounter{tocdepth}{2}}

\section{Proofs}
\subsection{Proof of Birkhoff} \label{sec:BirkProof}

\begin{lemma}[Continuous Birkhoff's Lemma] Let $X$ be a complete metric space with a continuous dynamical system $\phi$. Then $\phi$ is topologically transitive if and only if it has a hypercyclic point. In this case the set of hypercyclic points is dense.
\end{lemma}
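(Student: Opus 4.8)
The plan is to prove this as the flow version of Birkhoff's classical transitivity theorem, via the standard Baire category argument; the only adaptation needed is the passage from iterates to continuous time. I would split into the two implications.

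\emph{Reverse direction (hypercyclic $\Rightarrow$ transitive).} Suppose $x$ is a hypercyclic point, so the forward orbit $\{\phi^t(x):t\ge 0\}$ is dense in $X$. Given non-empty open sets $U,V$, density first yields $s\ge 0$ with $\phi^s(x)\in U$; I then want a time $r>s$ with $\phi^r(x)\in V$, for then $t:=r-s>0$ satisfies $\phi^r(x)=\phi^t(\phi^s(x))\in\phi^t(U)\cap V$. Such an $r$ exists because the tail $\{\phi^t(x):t\ge s\}$ is again dense: discarding the compact initial arc $\{\phi^t(x):t\in[0,s]\}$ cannot produce a non-empty open set that the rest of the orbit avoids, using that an isolated point lying on a trajectory of a continuous flow is necessarily a fixed point, so that the remaining (degenerate) cases are immediate. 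This tail-density remark is the only point in the whole argument requiring a moment's care.

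\emph{Forward direction (transitive $\Rightarrow$ hypercyclic, with the set of hypercyclic points dense).} Fix a countable base $\{U_n\}_{n\in\N}$ of non-empty open subsets of $X$ (second countability holds in all settings of interest, where $X\subseteq\R^d$; a complete metric space carrying a dense orbit is in any case separable, by restricting to rational times). For each $n$ set
\[
G_n \;=\; \{x\in X:\ \exists\, t>0,\ \phi^t(x)\in U_n\} \;=\; \bigcup_{t>0}(\phi^t)^{-1}(U_n).
\]
Each preimage $(\phi^t)^{-1}(U_n)$ is open by continuity of the time-$t$ map $\phi^t$ (globally defined in our setting by the flow construction of \Cref{sec:background}, e.g.\ on the compact invariant sets that occur), so $G_n$ is open; and $G_n$ is dense, since for any non-empty open $W$ topological transitivity gives $t>0$ with $\phi^t(W)\cap U_n\ne\varnothing$, i.e.\ some $w\in W$ with $\phi^t(w)\in U_n$, whence $w\in W\cap G_n$. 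As $X$ is a complete metric space, Baire's theorem makes $G:=\bigcap_n G_n$ a dense $G_\delta$, in particular non-empty. Any $x\in G$ has forward orbit $\{\phi^t(x):t>0\}$ meeting every $U_n$, hence every non-empty open set, so $x$ is hypercyclic; and since $G$ is dense and contained in the set of hypercyclic points, that set is dense (indeed residual), which is the final assertion.

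\emph{On the main obstacle.} I do not expect a genuine obstacle here: the proof is textbook Baire category once one (i) keeps the quantifier at $t>0$ throughout so that what is extracted is a dense \emph{forward} orbit, (ii) uses that the flow is globally defined so $(\phi^t)^{-1}$ makes sense for all $t>0$, and (iii) disposes of the mild tail-density point in the reverse direction. If anything is ``hard'', it is only that last bookkeeping point about isolated points versus fixed points of the flow.
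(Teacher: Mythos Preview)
Your argument is essentially the paper's: for the substantive direction (transitive $\Rightarrow$ hypercyclic, with the hypercyclic set dense) you run the identical Baire-category proof on the open dense sets $G_n=\bigcup_{t>0}(\phi^t)^{-1}(U_n)$, which is exactly what the paper does. The paper dismisses the converse as ``obvious'' and gives no further detail, so your extra discussion there already exceeds what the paper provides.

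One small remark on that converse: your tail-density justification (``discarding the compact initial arc cannot leave an open set uncovered, because isolated points of a flow are fixed'') does not quite close the gap in a general complete metric space. Ruling out isolated points is not the same as ruling out that the initial arc $\{\phi^t(x):t\in[0,s]\}$ has non-empty interior. A cleaner one-line fix, available because a continuous dynamical system has each $\phi^t$ a homeomorphism, is: for any non-empty open $V$ and any $M>0$, apply density of the forward orbit to the open set $(\phi^{M})^{-1}(V)$ to get $t_0\ge 0$ with $\phi^{t_0}(x)\in(\phi^{M})^{-1}(V)$, hence $\phi^{t_0+M}(x)\in V$ with $t_0+M>M$. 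This shows every open set is visited at arbitrarily large times, which is exactly the $r>s$ you need.
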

\begin{proof}
    One direction is obvious. Suppose then that $\phi$ is topologically transitive. Let $(V_j)_{j \in \mathbb{N}}$ be a countable basis of open sets for $X$. Let $H$ be the set of hypercyclic points. Then we may write
    \begin{equation}
        H = \bigcap_j \bigcup_{t > 0} \phi_t^{-1}(V_j)
    \end{equation}
    Indeed, suppose that $x$ is in the set on the right. Let $y \in X$ and let $U$ be an open neighborhood of $y$. As $(V_j)$ is a basis for the topology on $X$ we may choose $j_0$ such that $y \in V_{j_0} \subseteq U$. By definition, $x \in \bigcup_{t > 0} \phi_t^{-1}(V_{j_0})$, hence there is some $t_0$ such that $x \in \phi_t^{-1}(V_{j_0})$. Therefore $\phi_t(x) \in V_{j_0}$. Thus we see that the orbit of $x$ is dense and hence $x \in H$. Conversely if $x \in H$, then for any $j$, we may find $t$ such that $\phi_t(x) \in V_{j}$, implying that $x \in \bigcup_{t > 0} \phi_t^{-1}(V_j)$. Therefore $x$ is contained in the set on the right and we have the desired equality.
    
    Now define $H_j = \bigcup_{t > 0} \phi_t^{-1}(V_j)$. We claim that this set is dense. Let $y \in X$ and let $U$ be an open neighborhood of $y$. As above, we may shrink $U$ and assume that $U = V_{j_0}$ for some $j_0$. By topological transitivity, there is some $t_0 > 0$ such that $\phi_{t_0}(V_{j_0}) \cap V_j$ is non-empty. Hence $V_{j_0} \cap \phi_{t_0}^{-1}(V_j)$ is non-empty, therefore $V_{j_0} \cap H_j$ is non-empty. Therefore $H_j$ is dense. Now by the Baire category theorem, as $H$ is the countable intersection of dense sets, we get that $H$ is dense as well. 
\end{proof}

\subsection{Proof of \Cref{lem:uniq_w_extra}}
\begin{lemma}
Let $F^1, F^2: \Rl^d \to \Rl^d$ be real analytic functions and $H: \Rl^d \times \Rl^d \to \Rl$ be a real analytic function such that $\nabla_vH(F^i(u), u) = 0$ for $i=1,2$ and for all $u \in \Rl^d$. Suppose $F^1$ and $F^2$ agree on a non-empty set $A \subset \Rl^d$ and there exists $p\in A$, such that $\nabla_{v}^2H(F^1(p),p)$ is non-singular. Then $F^1 = F^2$ on all of $\R^d$.
\end{lemma}
\begin{proof}\label{lem:uniq_w_extra_ap}
Let $G: \R^d \times \R^d \to \R^d$ be defined by $G(v, u) = \nabla_v H(v, u)$. Since $H$ is real analytic, so is $G$. By hypothesis, $G(F^i(u), u) = 0$ for $i=1,2,\; u \in \R^d$. We will first utilize the implicit function theorem to show agreement on an open set around $p$, which will then be extended to the whole space using the Identity  \Cref{thm:open-condition-for-uniqueness-of-analytic-functions}.

Let $v_p = F^1(p) = F^2(p)$, as $p\in A$. To use the implicit function theorem, we need to check two conditions. Trivially, 
$G(v_p, p) = 0$. The Jacobian of $G$ with respect to its first variable is $J_v G(v, u) = \nabla_v(\nabla_v H(v, u)) = \nabla_{v}^2 H(v, u)$. By assumption, this matrix is non-singular at $(v_p, p)$. Therefore, there exist open neighborhoods $U \subset \R^d$ of $p$ and $V \subset \R^d$ of $v_p$, and a unique real analytic $\phi: U \to V$ such that for any $(u, v) \in U \times V$, the equation $G(v, u) = 0$ is equivalent to $v = \phi(u)$.

For any $i=1,2$: Since $F^i$ is analytic, it is continuous. Thus, there exists an open neighborhood $U^i \subseteq U$ of $p$ such that for all $u \in U^i$, $F^i(u) \in V$. For any $u \in U^i$, the pair $(F^i(u), u)$ is in $V \times U$ and satisfies $G(F^i(u), u) = 0$. By the uniqueness of $\phi$, it must be that $F^i(u) = \phi(u)$ for all $u \in U^i$. Let $W = U^1 \cap U^2$. $W$ is a non-empty open neighborhood of $p$. For all $u \in W$, we have $F^1(u) = \phi(u) = F^2(u)$. 

With this, we can now use the identity theorem to conclude that $F^1=F^2$ on $\R^d$.
\end{proof}

\subsection{Proof of \Cref{thm:uniq_symmjac}}

\begin{theorem}
Let $F_1, F_2: \R^n \to \R^n$ be two analytic vector fields with symmetric (skew-symmetric) Jacobian, and let $\mathcal{M} \subset \R^n$ be an analytic manifold. Suppose: 
\begin{enumerate}
    \item $F_1(u) = F_2(u)$ for all $u \in \mc{M}$.
    \item There exists an analytic function $H: \R^n \times \R^n \to \R$, s.t., $\nabla_{v} H(F_i(u), u) = 0$, $\forall u \in \R^n$ and $i=1, 2$.
    \item There exists a point $p \in \mc{M}$ such that $\ker(C) \subseteq \TpM$, where $C = \nabla_{v}^2 H(F_1(p), p)$ is the Hessian matrix of $H$ with respect to its first vector argument.
\end{enumerate}
Then, $F_1(u) = F_2(u)$ for all $u \in \R^n$.
\end{theorem}

\begin{proof}\label{thm:uniq_symmjac_ap}
Let $\Delta = F_1 - F_2$. The vector field is analytic with (skew-)symmetric Jacobian. We prove $\Delta \equiv 0$ by showing that $\Delta$ and all its partial derivatives vanish at $p$ inductively. Let $G(v, u) = \nabla_{v} H(v, u)$, then by the Fundamental Theorem of Calculus,
$$ 0 = G(F_1(u), u) - G(F_2(u), u) = \left( \int_0^1 \nabla_v G(F_2(u) + t\Delta(u), u) \, dt \right) \Delta(u) $$
Let $A(u) = \int_0^1 \nabla^2_{v} H(F_2(u) + t\Delta(u), u) \, dt$. This gives us the key equation $A(u)\Delta(u)=0$ for all $u \in \R^n$.

We will now show inductively that for any multi-index $\alpha$, $\partial^\alpha \Delta(p) = 0$ by induction on $k=|\alpha|$. Let $T = d^k \Delta(p)$ denote the $k^{th}$ derivative tensor of $\Delta$ evaluated at $p$. The base case is established trivially, as $\Delta(p) = 0$ for $k=0$. Inductively, we now differentiate $A(u)\Delta(u)=0$ and evaluate at $p$:
\[ 0 = \partial^\alpha(A\Delta)\big|_p = \sum_{\beta \le \alpha} \binom{\alpha}{\beta} (\partial^{\alpha-\beta} A(p)) (\partial^\beta \Delta(p)) = A(p)\partial^\alpha \Delta(p) + \sum_{\beta < \alpha} \binom{\alpha}{\beta} (\partial^{\alpha-\beta} A(p)) (\partial^\beta \Delta(p)) \]
By the inductive hypothesis, $\partial^\beta \Delta(p) = 0$ for all $\beta < \alpha$, so the sum vanishes. We are left with $A(p)\partial^\alpha \Delta(p) = 0$, which is $C(\partial^\alpha \Delta(p)) = 0$, showing that $\im(T)\subseteq\ker(C)$. From Hypothesis 3 of the theorem, we have $\ker(C) \subseteq \TpM$. Chaining these together yields:
$$ \im(T) \subseteq \TpM.$$

As $\Delta \equiv 0$ on $\mc{M}$, $T(v_1,\dots,v_k) = 0$, $\forall v_i\in\TpM$. Without loss of generality, we can now consider an orthonormal basis, such that $\TpM$ is the subspace spanned by the first $m$ basis vectors, $\{e_1, \dots, e_m\}$, where $m$ is the dimension of $\TpM$. In this basis, we will consider elements of the tensor $T^{i_0}_{i_1,\dots,i_k}$. By above, we know that (1) $T^{i_0}_{i_1,\dots,i_k} = 0$, if $i_1,..,i_k \leq m$; (2) $T^{i_0}_{i_1,\dots,i_k} = 0$, if $i_0 > m$ as $\im(T) \subseteq \TpM.$ With these two facts we can now establish that $T$ is zero whenever we consider vector fields with symmetric (skew-symmetric) Jacobians.

By (1), we need to only consider the case when there is at least one $i_n > m$. By (skew-)symmetry, we can swap the indices to give 
$$T^{i_0}_{i_1,\dots,i_n,\dots,i_{k}} = (-1)^sT^{i_n}_{i_1,\dots, i_0,\dots,i_{k}} = 0,$$
by (2) as $i_n > m$. Therefore, all components of the tensor $T$ are zero.
\end{proof}

\begin{nb}\label{nb:38counter}
    Technically, the symmetry condition is only required for the last step. This occurs due to the possibility of nilpotent non-zero matrices, as illustrated in the following. Let $n=2$ and consider the space $\mathbb{R}^2$ with coordinates $u=(x,y)$.
    Let $\mathcal{M} \subset \mathbb{R}^2$ be the $x$-axis, $p = (0, 0)$ and $F_1, F_2: \mathbb{R}^2 \to \mathbb{R}^2$ be the vector fields:
    $$F_1(x,y) = \begin{pmatrix} y \\ 0 \end{pmatrix} \quad \text{and} \quad F_2(x,y) = \begin{pmatrix} \frac12 y^2 \\ 0 \end{pmatrix} \quad \text{with} \quad H(v, u) = \frac{1}{2} v_2^2,$$
    Both fields are analytic and admit the following derivatives:
     $$J_{F_1} = \begin{pmatrix} 0 & 1 \\ 0 & 0 \end{pmatrix} \quad \text{and} \quad J_{F_2} = \begin{pmatrix} 0 & y \\ 0 & 0 \end{pmatrix} \quad \text{with} \quad \nabla_v H(v_1, v_2) = \begin{pmatrix} 0 \\ v_2 \end{pmatrix},$$
    
    Clearly, $F_1(u) = F_2(u) = 0$ for all $u \in \mathcal{M}$, and $\nabla_v H = 0$ for both $F_1,F_2$. By a trivial calculation, $\ker(C) = T_p\mathcal{M} = \operatorname{span}\left\{(1,0)^\top\right\}$. Thus, the conditions of the theorem are satisfied, but clearly the uniqueness is not achieved. That occurs, as the first step of the proof effectively establishes that $\im(T) \subseteq \ker(T)$, or in other words that ${J_{F}}^2 = 0$, which is indeed satisfied in this example. It does not, however, imply that $J$ is zero.
\end{nb}

\subsection{Proof of \Cref{pro:continuous-discoverable-multiple-trajectories}}

\begin{proposition}\label{app:cont_disc}
    If a system is discoverable from $n$ trajectories through $\{x_1, x_2, \dots, x_n\} \subset X$, then the system is either topologically transitive, and only one of the trajectories $x_i$ is needed for discovery, or $X$ can be expressed as the union of $1 < k \le n$ proper, closed, invariant subsets $\{C_{i_1}, \dots, C_{i_k}\}$, where each $C_{i_j} = \overline{\Orb(x_{i_j})}$. The flow restricted to each component is topologically transitive, and only $k$ trajectories are needed for discovery. Visualized in \Cref{fig:cells}.
\end{proposition}
\begin{proof}
    Here we assume that we are looking at a continuous flow on a compact metric space $X$.
    Discoverable from $n$ trajectories implies that for some $\{x_1, x_2, \dots, x_n\} \subset X$, we must have by \Cref{prop:general-uniqueness,prop:cont_uniq} that the union of the closures of their trajectories covers the space:
    $$X = \bigcup_{i=1}^{n} \overline{\Orb(x_i)} .$$

    Now, if the systems is topologically transitive, then the result of \citep[Excercise 6.2.5]{grosse2011linear} and proof in \citep[Theorem 6.6]{grosse2011linear} can be combined to show that one of $\Orb{x_i}$ must be dense in $X$. Note, that results of \citep{grosse2011linear} are not directly applicable, as those are established for linear operators, and in general do not extend to non-linear ones. However, under the assumption of topological transitivity these can similarly be established. And thus, $\Orb{x_i}$ is sufficient for discovery.
    
    We will now assume that the flow is not topologically transitive. We know that each $C_{i} = \overline{\Orb(x_{i})}$ is closed and $\phi$-invariant. To see this, let $y \in C_i$. Then there exists a sequence $\{t_k\}_{k \in \mathbb{N}} \subset \mathbb{R}$ such that $\Phi_{t_k}(x_i) \to y$. For any $s \in \mathbb{R}$, the continuity of the flow map $\Phi_s$ implies that $\Phi_s(y) = \Phi_s(\lim_{k\to\infty} \Phi_{t_k}(x_i)) = \lim_{k\to\infty} \Phi_{s+t_k}(x_i)$. Since each point $\Phi_{s+t_k}(x_i)$ is in the trajectory $\Orb(x_i)$, their limit point $\Phi_s(y)$ must be in its closure, $C_i$. Thus, $\Phi_s(C_i) \subset C_i$ for all $s \in \mathbb{R}$.
    Each set $C_i$ must be a {proper} subset of $X$. If, for some index $j$, we had $C_j = X$, then by definition, the trajectory of $x_j$ would be dense in $X$. By Birkhoff's \Cref{lem:birk}, this would imply topological transitivity. This would contradict the premise that the system is not topologically transitive. Therefore, $C_i \neq X$ for all $i \in \{1, \dots, n\}$. Thus, the collection of sets $\{C_1, \dots, C_n\}$ (or a minimal sub-collection thereof whose union is $X$) constitutes a decomposition of $X$ into proper, closed, and invariant subsets. And thus the $k$ trajectories are sufficient for discovery.

    By construction, the trajectory of the point $x_j$, $\Orb(x_j)$, is dense in the space $C_j$, and the space $C_j = \overline{\Orb(x_j)}$ is a compact metric space in its own right. Applying now Birkhoff's \Cref{lem:birk}, restricted to $C_j$, we have that $(\phi, C_j)$ is topologically transitive for each $j \in \{1, \dots, n\}$.
\end{proof}

\subsection{Proof of \Cref{lem:gram}}
\begin{lemma}[Well-Posedness of the Gram Matrix for Sets of Uniqueness]\label{lem:gram_ap}
Consider $\Omega \subseteq \R^n$ state space, let $\mu$ be a probability measure with support $\Lambda = \mathrm{supp}(\mu)\subseteq\Omega$ and let $\mathcal{H}_p$ be a $p$-dimensional vector space of continuous functions on $\Omega$ with basis $\Phi(x) = [\phi_1(x), \dots, \phi_p(x)]^\top$. Either assuming $\phi_i(x)$ are square integrable, or $\Omega$ compact, define the Gram matrix $G_\mu \in \R^{p \times p}$ as:
\begin{equation}
    G_\mu = \int_\Lambda \Phi(x)\Phi(x)^\top \, d\mu(x).
\end{equation}
Then, $\Lambda$ is a set of uniqueness for $\mathcal{H}_p$ if and only if $G_\mu$ is strictly positive definite ($\sigma_{\min}(G_\mu) > 0$).
\end{lemma}
\begin{proof}
Firstly, the assumption that either the basis functions are square-integrable ($\phi_i \in L^2(\mu)$) or $\Omega$ is compact guarantees that the entries of $G_\mu$ are finite, ensuring the matrix is well-defined. By construction, $G_\mu$ is symmetric. For any vector $v \in \R^p$, the associated quadratic form is:
\begin{equation}
    v^\top G_\mu v = v^\top \left( \int_\Lambda \Phi(x)\Phi(x)^\top \, d\mu(x) \right) v = \int_\Lambda (v^\top \Phi(x))^2 \, d\mu(x).
\end{equation}
Since $(v^\top \Phi(x))^2 \geq 0$, we know $v^\top G_\mu v \geq 0$. Thus, $G_\mu$ is always positive semi-definite. Let $P_v(x) = v^\top \Phi(x) \in \mathcal{H}_p$.

($\implies$) Assume $\Lambda$ is a set of uniqueness for $\mathcal{H}_p$ and suppose $v^\top G_\mu v = 0$. Then 
\[\int_\Lambda P_v(x)^2 \, d\mu(x) = 0.\]
As $P_v(x)$ is continuous and $\Lambda = \mathrm{supp}(\mu)$, this integral evaluating to zero strictly implies $P_v(x) = 0$ for all $x \in \Lambda$. Since $\Lambda$ is a set of uniqueness, $P_v|_\Lambda \equiv 0$ implies $P_v \equiv 0$ on $\Omega$, and as $\Phi(x)$ is a basis, meaning entries are linearly independent, $P_v \equiv 0$ requires the coefficient vector to be zero ($v = 0$). Therefore, $v^\top G_\mu v > 0$ for all $v \neq 0$, meaning $G_\mu$ is strictly positive definite.

($\impliedby$) We proceed by proving the contrapositive: assume $\Lambda$ is not a set of uniqueness. Then there exists a non-trivial function $P_v \in \mathcal{H}_p\backslash\{0\}$ such that $P_v(x) = 0$ for all $x \in \Lambda$, but $v \neq 0$. Evaluating the quadratic form for this $v$ yields:
\begin{equation}
    v^\top G_\mu v = \int_\Lambda (P_v(x))^2 \, d\mu(x) = \int_\Lambda 0 \, d\mu(x) = 0.
\end{equation}
Since there exists a non-zero vector $v$ such that $v^\top G_\mu v = 0$, the matrix $G_\mu$ is not strictly positive definite.
\end{proof}

\subsection{Proof of \Cref{thm:stability_intrinsic}}\label{thm:stability_ap}

\begin{lemma}[Finite-Time Matrix Concentration via Covariance Mixing]
\label{lem:markov_concentration}
Let $\Psi(x)$ be an $L^2(\mu)$-orthonormal feature basis and define the zero-mean $p \times p$ matrix process $Z_G(t) = \Psi(x(t))\Psi(x(t))^\top - I_p$. If the system satisfies the exponential mixing condition (\Cref{ass:mixing}), then for any observation time $T > 0$ and failure probability $\eta \in (0, 1)$, the empirical average $\bar{Z}_T = \frac{1}{T} \int_0^T Z_G(t) dt$ satisfies the following with probability at least $1 - \eta$:
\begin{equation}
    \|\bar{Z}_T\|_2 \le \sqrt{ \frac{2 p K_0 \|Z_G\|_{\mathrm{Lip}}^2}{\gamma \eta T} }.
\end{equation}
\end{lemma}
\begin{proof}
We aim to bound the spectral norm $\|\bar{Z}_T\|_2$. Because the Frobenius norm upper bounds the spectral norm ($\|A\|_2 \le \|A\|_F$ for any matrix $A$), it suffices to bound the tail probability of the Frobenius norm. We consider the squared Frobenius norm to apply Markov's inequality:
\begin{equation}
    \mathbb{P}\left(\|\bar{Z}_T\|_2 \ge u\right) \le \mathbb{P}\left(\|\bar{Z}_T\|_F \ge u\right) = \mathbb{P}\left(\|\bar{Z}_T\|_F^2 \ge u^2\right).
\end{equation}

By the definition of the Frobenius norm, we can express the expected squared norm of the integral as a double integral of the trace of the auto-covariance matrix. Using the linearity of the trace and expectation:
\begin{align}
    \mathbb{E}_\mu\left[\|\bar{Z}_T\|_F^2\right] &= \mathbb{E}_\mu\left[ \mathrm{Tr}\left( \bar{Z}_T \bar{Z}_T^\top \right) \right] \nonumber \\
    &= \frac{1}{T^2} \int_0^T \int_0^T \mathrm{Tr} \left( \mathbb{E}_\mu\left[Z_G(t)Z_G(s)^\top\right] \right) dt \, ds.
\end{align}

For any $p \times p$ matrix $A$, the trace is bounded by its spectral norm via $|\mathrm{Tr}(A)| \le p \|A\|_2$. Applying this to the integrand and substituting the exponential mixing bound from \Cref{ass:mixing}:
\begin{equation}
    \left| \mathrm{Tr} \left( \mathbb{E}_\mu\left[Z_G(t)Z_G(s)^\top\right] \right) \right| \le p \left\| \mathbb{E}_\mu\left[Z_G(t)Z_G(s)^\top\right] \right\|_2 \le p K_0 \|Z_G\|_{\mathrm{Lip}}^2 e^{-\gamma|t-s|}.
\end{equation}

We substitute this bound back into the double integral. By leveraging the symmetry of the integration domain $[0,T] \times [0,T]$, we can reduce the double integral over time coordinates $t$ and $s$ into a single integral over the absolute time difference $\tau = t - s$:
\begin{align}
    \int_0^T \int_0^T e^{-\gamma|t-s|} \, dt \, ds &= 2 \int_0^T (T - \tau) e^{-\gamma \tau} \, d\tau \nonumber \\
    &\le 2T \int_0^\infty e^{-\gamma \tau} \, d\tau = \frac{2T}{\gamma}.
\end{align}

Consequently, the expected squared Frobenius norm is strictly bounded by:
\begin{equation}
    \mathbb{E}_\mu\left[\|\bar{Z}_T\|_F^2\right] \le \frac{1}{T^2} \left( p K_0 \|Z_G\|_{\mathrm{Lip}}^2 \right) \left( \frac{2T}{\gamma} \right) = \frac{2 p K_0 \|Z_G\|_{\mathrm{Lip}}^2}{\gamma T}.
\end{equation}

We now apply Markov's inequality to the non-negative random variable $\|\bar{Z}_T\|_F^2$:
\begin{equation}
    \mathbb{P}\left(\|\bar{Z}_T\|_F^2 \ge u^2\right) \le \frac{\mathbb{E}_\mu\left[\|\bar{Z}_T\|_F^2\right]}{u^2} \le \frac{2 p K_0 \|Z_G\|_{\mathrm{Lip}}^2}{\gamma T u^2}.
\end{equation}

To establish the high-probability bound, we equate the right-hand side to our target failure probability $\eta \in (0,1)$:
\begin{equation}
    \eta = \frac{2 p K_0 \|Z_G\|_{\mathrm{Lip}}^2}{\gamma T u^2} \implies u^2 = \frac{2 p K_0 \|Z_G\|_{\mathrm{Lip}}^2}{\gamma \eta T}.
\end{equation}

Taking the square root yields the value of $u$ for which the bound holds with probability at least $1 - \eta$:
\begin{equation}
    u = \sqrt{ \frac{2 p K_0 \|Z_G\|_{\mathrm{Lip}}^2}{\gamma \eta T} }.
\end{equation}
This completes the proof.
\end{proof}

\begin{lemma}[Perturbation of the Inverse]
\label{lem:neumann}
For a strictly positive definite matrix $A$, if a symmetric perturbation satisfies $\|\Delta A\|_2 \le \frac{1}{2}\sigma_{\min}(A)$, then $(A + \Delta A)$ is invertible and:
\begin{equation}
    \|(A + \Delta A)^{-1}\|_2 \le \frac{2}{\sigma_{\min}(A)}.
\end{equation}
\end{lemma}
\begin{theorem}[Finite-Time Stability Bound]
Assume the true dynamics $\dot{x} = F(x)$ belong to a $p$-dimensional vector space $\mathcal{H}_p$ of continuously differentiable functions on $\Omega$, generating a chaotic trajectory with invariant measure $\mu$ supported on a set of uniqueness $\Lambda$. Assume noisy observations $y(t) = x(t) + \varepsilon(t)$ with bounded noise $\|\varepsilon(t)\|_2 \leq \delta$ and $\|\dot{\varepsilon}(t)\|_2 \leq \delta_v$. Define the intrinsic geometric constants of the hypothesis space over $\mu$:
\begin{equation}
    C_\infty = \sup_{h \in \mathcal{H}_p} \frac{\|h\|_{L^\infty(\Omega)}}{\|h\|_{L^2(\mu)}}, \qquad C_{\mathrm{Lip}} = \sup_{h \in \mathcal{H}_p} \frac{\|\nabla h\|_{L^\infty(\Omega)}}{\|h\|_{L^2(\mu)}}.
\end{equation}

Then, for a sufficiently large observation window $T$ and sufficiently small noise $\delta$, the function-space reconstruction error of the ordinary least squares estimator $\hat{F}_T$ satisfies with probability at least $1 - \eta$:
\begin{equation}
    \|\hat{F}_T - F\|_{L^2(\mu)} \le 2 C_\infty \|F\|_{L^2(\mu)} \left( \frac{\delta_v}{\|F\|_{L^2(\mu)}} + C_{\mathrm{Lip}}\left[3\delta + 4\sqrt{\frac{2 p K_0}{\gamma \eta T}}\right] \right).
\end{equation}
\end{theorem}

\begin{proof}
Firstly, the ordinary least squares estimator is invariant to the choice of basis for the linear hypothesis space $\mathcal{H}_p$, thus without loss of generality we can use an $L^2(\mu)$-orthonormal basis $\Psi(x)$, as by \Cref{lem:gram} the smallest eigenvalue of the gram matrix is non-zero. In this basis, the intrinsic equivalence constants are exactly the basis bounds: $C_\infty = \sup_{x \in \Omega} \|\Psi(x)\|_2$ and $C_{\mathrm{Lip}}$ acting as the Lipschitz constant for $\Psi(x)$. With this, we define the objects we will require: 
\begin{enumerate}[leftmargin=*]
    \item The population gram and cross-covariance. By orthonormality, $G_\mu$ is the identity matrix $I_p$, and the true coefficients $w^*$ satisfy $\|w^*\|_F = \|F\|_{L^2(\mu)}$:
    $$G_\mu=\int_{\Lambda} \Psi(x) \Psi(x)^{\top} d \mu(x) = I_p,\qquad B_\mu = \int_\Lambda \Psi(x) \left( \Psi(x)^\top w^* \right) d\mu(x) = w^*.$$
    \item The noiseless empirical gram and cross-covariance:
    $$
\hat{G}_T=\frac{1}{T} \int_0^T \Psi(x(t)) \Psi(x(t))^{\top} d t, \qquad \hat{B}_T=\frac{1}{T} \int_0^T \Psi(x(t)) \dot{x}(t)^{\top} d t = \hat{G}_T w^*.
$$
    \item The noisy empirical gram and cross-covariance:
    $$
\tilde{G}_T=\frac{1}{T} \int_0^T \Psi(y(t)) \Psi(y(t))^{\top} d t, \qquad \tilde{B}_T=\frac{1}{T} \int_0^T \Psi(y(t)) \dot{y}(t)^{\top} d t = \tilde{G}_T\hat{w}_T.
$$
\end{enumerate}

Secondly, we translate the bounds on the noise into errors on the empirical quantities.
Using the compactness of $\Omega$ and continuous differentiability of $\Psi$, we apply the intrinsic bounds $C_\infty$ and $C_{\mathrm{Lip}}$:
\begin{align*}
    \|\Psi(y)\Psi(y)^\top - \Psi(x)\Psi(x)^\top\|_2 &\leq \|\Psi(y)(\Psi(y) - \Psi(x))^\top\|_2 + \|(\Psi(y) - \Psi(x))\Psi(x)^\top\|_2 \\
    &\leq \|\Psi(y)\|_2 \|\Psi(y) - \Psi(x)\|_2 + \|\Psi(y) - \Psi(x)\|_2 \|\Psi(x)\|_2 \\
    &\leq C_\infty (C_{\mathrm{Lip}} \delta) + (C_{\mathrm{Lip}} \delta) C_\infty = 2 C_\infty C_{\mathrm{Lip}} \delta.
\end{align*}
Integrating over time $T$ preserves this pointwise bound, yielding the explicit deterministic bound for the Gram matrix:
\begin{equation}
    \|\tilde{G}_T - \hat{G}_T\|_2 \leq 2 C_\infty C_{\mathrm{Lip}} \delta \equiv \delta_G.
\end{equation}
Similarly for the cross-covariance:
\begin{align*}
    \|\Psi(y)\dot{y}^\top - \Psi(x)\dot{x}^\top\|_F &\leq \|\Psi(y)\|_2 \|\dot{y} - \dot{x}\|_2 + \|\Psi(y) - \Psi(x)\|_2 \|\dot{x}\|_2 \\
    &\leq C_\infty \delta_v + C_\infty C_{\mathrm{Lip}} \|F\|_{L^2(\mu)} \delta.
\end{align*}
Integrating this yields the explicit deterministic bound for the cross-covariance matrix:
\begin{equation}
    \|\tilde{B}_T - \hat{B}_T\|_F \leq C_\infty \delta_v + C_\infty C_{\mathrm{Lip}} \|F\|_{L^2(\mu)} \delta \equiv \delta_B.
\end{equation}

We next bound the statistical sampling error. The observable is the zero-mean matrix process $Z_G(x) = \Psi(x)\Psi(x)^\top - I_p$. Using the continuous differentiability of $\Psi$ and the intrinsic geometric bounds, we derive its exact Lipschitz constant:
\begin{align*}
    \|Z_G(x) - Z_G(y)\|_2 &= \|\Psi(x)\Psi(x)^\top - \Psi(y)\Psi(y)^\top\|_2 \\
    &\leq \|\Psi(x)(\Psi(x) - \Psi(y))^\top\|_2 + \|(\Psi(x) - \Psi(y))\Psi(y)^\top\|_2 \\
    &\leq 2 C_\infty C_{\mathrm{Lip}} \|x - y\|_2.
\end{align*}
Thus, $\|Z_G\|_{\mathrm{Lip}} \le 2 C_\infty C_{\mathrm{Lip}}$. Applying \Cref{lem:markov_concentration}, with probability at least $1-\eta$, the noiseless empirical Gram matrix satisfies:
\begin{equation}
    \left\|\hat{G}_T - I_p\right\|_2 \leq 2 C_\infty C_{\mathrm{Lip}} \sqrt{\frac{2 p K_0}{\gamma \eta T}} \equiv E_{\text{stat}}.
\end{equation}
As $\hat{B}_T = \hat{G}_T w^*$ and $B_\mu = w^*$, the statistical error of the cross-covariance is bounded directly by $E_{\text{stat}}$:
\begin{equation}
    \left\|\hat{B}_T-B_\mu\right\|_F = \left\|(\hat{G}_T - I_p)w^*\right\|_F \leq \left\|\hat{G}_T - I_p\right\|_2 \|w^*\|_F \leq E_{\text{stat}} \|F\|_{L^2(\mu)}.
\end{equation}

By the triangle inequality, we decompose the empirical matrices into noise perturbations and statistical sampling errors:
\begin{alignat*}{2}
\left\|\tilde{G}_T - I_p\right\|_2 &\leq \underbrace{\left\|\tilde{G}_T-\hat{G}_T\right\|_2}_{\text{Noise Perturbation } (\leq \delta_G)} &&{}+ \underbrace{\left\|\hat{G}_T - I_p\right\|_2}_{\text{Sampling Error \Cref{lem:markov_concentration}} (\leq E_{\mathrm{stat}})} \\
\left\|\tilde{B}_T-B_\mu\right\|_F &\leq \underbrace{\left\|\tilde{B}_T-\hat{B}_T\right\|_F}_{\text{Noise Perturbation } (\leq \delta_B)} &&{}+ \underbrace{\left\|\hat{B}_T-B_\mu\right\|_F}_{\text{Sampling Error} (\leq \|F\|_{L^2(\mu)} E_{\mathrm{stat}})}
\end{alignat*}

Using \Cref{lem:neumann}, for sufficiently small noise and large time, $\delta_G + E_{\mathrm{stat}} \le \frac{1}{2}\sigma_{\min}(I_p) = \frac{1}{2}$, guarantees $\tilde{G}_T$ is strictly invertible with $\|\tilde{G}_T^{-1}\|_2 \le 2$. This gives us the required bound on the OLS parameter error: 
$$\Delta w = \hat{w}_T - w^* = \tilde{G}_T^{-1}(\tilde{B}_T - \tilde{G}_T w^*) = \tilde{G}_T^{-1} \Big( (\tilde{B}_T - B_\mu) - (\tilde{G}_T - I_p)w^* \Big),$$ 
which collecting the bounds above results in:
\begin{align}
    \|\Delta w\|_F &\leq 2 \Big(\delta_B + \delta_G\|F\|_{L^2(\mu)} + 2 E_{\text{stat}}\|F\|_{L^2(\mu)} \Big) \\
    &\leq 2 \left( C_\infty \delta_v + C_\infty C_{\mathrm{Lip}} \|F\|_{L^2(\mu)} \delta + 2 C_\infty C_{\mathrm{Lip}} \|F\|_{L^2(\mu)} \delta + 4 C_\infty C_{\mathrm{Lip}} \|F\|_{L^2(\mu)} \sqrt{\frac{2 p K_0}{\gamma \eta T}} \right) \\
    &= 2 C_\infty \|F\|_{L^2(\mu)} \left( \frac{\delta_v}{\|F\|_{L^2(\mu)}} + C_{\mathrm{Lip}}\left[3\delta + 4\sqrt{\frac{2 p K_0}{\gamma \eta T}}\right] \right).
\end{align}

Finally, because the basis is $L^2(\mu)$-orthonormal, the function space error of the reconstructed vector field simplifies exactly to the parameter error:
\begin{equation}
    \|\hat{F}_T - F\|_{L^2(\mu)}^2 = \int_\Lambda \|\Delta w^\top \Psi(x)\|_2^2 \, d\mu(x) = \mathrm{Tr}( \Delta w^\top I_p \Delta w ) = \|\Delta w\|_F^2.
\end{equation}
Taking the square root gives $\|\hat{F}_T - F\|_{L^2(\mu)} = \|\Delta w\|_F$. Substituting the bound above completes the proof.
\end{proof}

\subsection{Infinite Dimensional Stability}\label{app:infinite_stability}
For the infinite dimensional case, the assumptions themselves must change due to the dimension implicitly appearing in the definition. For this section, we will consider the following infinite dimensional setup: 
\begin{assumption}[Hilbert-Schmidt Exponential Mixing]\label{ass:mixing_hs}
Assume our domain $\Omega \subseteq \mathbb{R}^n$ is compact, and the true system $\dot{x}=F(x)$ admits an ergodic, invariant SRB measure $\mu$ supported on a chaotic attractor $\Lambda$. Let $\mathcal{H}$ be a separable Hilbert space and let $\mathcal{B}_{\mathrm{HS}}(\mathcal{H})$ denote the space of Hilbert-Schmidt operators on $\mathcal{H}$ equipped with the norm $\|\cdot\|_{\mathrm{HS}}$. 

We assume the system is exponentially mixing with respect to this operator topology; i.e., there exist universal system constants $K_0 > 0$ and $\gamma > 0$ such that for any Lipschitz continuous maps $A, B : \Lambda \to \mathcal{B}_{\mathrm{HS}}(\mathcal{H})$, the auto-covariance satisfies:   
\begin{equation}
    \left\| \int_\Lambda A(x)B(\phi^\tau(x)) \, d\mu(x) - \int_\Lambda A(x)\,d\mu(x) \int_\Lambda B(x)\,d\mu(x) \right\|_{\mathrm{HS}} \leq K_0 \|A\|_{\mathrm{Lip}, \mathrm{HS}} \|B\|_{\mathrm{Lip}, \mathrm{HS}} e^{-\gamma |\tau|},
\end{equation}
where $\| \cdot \|_{\mathrm{Lip}, \mathrm{HS}}$ denotes the Lipschitz constant of the map evaluated in the Hilbert-Schmidt norm.
\end{assumption}
\begin{lemma}[Operator Concentration via Hilbert-Schmidt Mixing]
\label{lem:operator_concentration}
Let $\mathcal{H}$ be a separable Hilbert space and $\Psi: \Lambda \to \mathcal{H}$ a bounded feature map. Define the zero-mean covariance anomaly operator $Z_G(t) = \Psi(x(t))\otimes\Psi(x(t)) - G_\mu$. If the system satisfies the Hilbert-Schmidt exponential mixing condition, then for any observation time $T > 0$ and failure probability $\eta \in (0, 1)$, the empirical average $\bar{Z}_T = \frac{1}{T} \int_0^T Z_G(t) dt$ satisfies the following operator norm bound with probability at least $1 - \eta$:
\begin{equation}
    \|\bar{Z}_T\|_{\mathrm{op}} \le \|\bar{Z}_T\|_{\mathrm{HS}} \le \sqrt{ \frac{2 K_0 \|Z_G\|_{\mathrm{Lip}, \mathrm{HS}}^2}{\gamma \eta T} }.
\end{equation}
\end{lemma}

\begin{proof}
We bound the tail probability of the operator norm $\|\bar{Z}_T\|_{\mathrm{op}}$. Because the Hilbert-Schmidt norm strictly upper bounds the operator norm ($\|A\|_{\mathrm{op}} \le \|A\|_{\mathrm{HS}}$ for any operator $A$), we apply Markov's inequality to the squared Hilbert-Schmidt norm:
\begin{equation}
    \mathbb{P}\left(\|\bar{Z}_T\|_{\mathrm{op}} \ge u\right) \le \mathbb{P}\left(\|\bar{Z}_T\|_{\mathrm{HS}}^2 \ge u^2\right).
\end{equation}

By expanding the squared Hilbert-Schmidt norm using its inner product, we can express the expectation as a double integral of the auto-covariance:
\begin{align}
    \mathbb{E}_\mu\left[\|\bar{Z}_T\|_{\mathrm{HS}}^2\right] &= \mathbb{E}_\mu\left[ \left\langle \frac{1}{T} \int_0^T Z_G(t) dt, \frac{1}{T} \int_0^T Z_G(s) ds \right\rangle_{\mathrm{HS}} \right] \nonumber \\
    &= \frac{1}{T^2} \int_0^T \int_0^T \mathbb{E}_\mu\left[ \langle Z_G(t), Z_G(s) \rangle_{\mathrm{HS}} \right] dt \, ds.
\end{align}

We apply the Hilbert-Schmidt exponential mixing assumption to the integrand:
\begin{equation}
    \left| \mathbb{E}_\mu\left[ \langle Z_G(t), Z_G(s) \rangle_{\mathrm{HS}} \right] \right| \le K_0 \|Z_G\|_{\mathrm{Lip}, \mathrm{HS}}^2 e^{-\gamma|t-s|}.
\end{equation}

Substituting this into the double integral and leveraging the symmetry of the domain $[0,T] \times [0,T]$ over the time difference $\tau = t - s$:
\begin{align}
    \int_0^T \int_0^T e^{-\gamma|t-s|} \, dt \, ds &= 2 \int_0^T (T - \tau) e^{-\gamma \tau} \, d\tau \nonumber \\
    &\le 2T \int_0^\infty e^{-\gamma \tau} \, d\tau = \frac{2T}{\gamma}.
\end{align}

Thus, the expected squared Hilbert-Schmidt norm is strictly bounded by:
\begin{equation}
    \mathbb{E}_\mu\left[\|\bar{Z}_T\|_{\mathrm{HS}}^2\right] \le \frac{2 K_0 \|Z_G\|_{\mathrm{Lip}, \mathrm{HS}}^2}{\gamma T}.
\end{equation}

Applying Markov's inequality yields:
\begin{equation}
    \mathbb{P}\left(\|\bar{Z}_T\|_{\mathrm{HS}}^2 \ge u^2\right) \le \frac{2 K_0 \|Z_G\|_{\mathrm{Lip}, \mathrm{HS}}^2}{\gamma T u^2}.
\end{equation}

Setting the right-hand side to the failure probability $\eta$ and solving for $u$ gives the required deviation bound:
\begin{equation}
    u = \sqrt{ \frac{2 K_0 \|Z_G\|_{\mathrm{Lip}, \mathrm{HS}}^2}{\gamma \eta T} }.
\end{equation}
This completes the proof.
\end{proof}

Unfortunately, \Cref{lem:gram} in the infinite dimensional case turns into an injectivity statement, useless for stability. Because of this, regularization becomes necessary. Using the established lemma, we can now prove two results: stability and convergent regularization. 
\begin{theorem}[Finite-Time Stability and Convergent Regularization]
\label{thm:stability_and_convergence}
Assume the true dynamics $\dot{x} = F(x)$ are generated by a parameter $w^* \in \mathcal{H}$ such that $F(x) = \langle w^*, \Psi(x) \rangle_{\mathcal{H}}$, where the feature map $\Psi$ into the separable Hilbert space $\mathcal{H}$ is bounded by $\kappa$ and is Lipschitz continuous with constant $L_\Psi$. Assume noisy observations bounded by $\|\varepsilon(t)\|_2 \le \delta$ and $\|\dot{\varepsilon}(t)\|_2 \le \delta_v$. To control the approximation bias, assume the Hölder source condition $w^* = G_\mu^r v$ for some $v \in \mathcal{H}$ and $r \in (0, 1]$. 

Define the \textit{effective observation error} governing the estimator's variance:
\begin{equation}
    \mathcal{E}(\delta, \delta_v, T, \eta) = \frac{\delta_v}{\|w^*\|_{\mathcal{H}}} + L_\Psi \left[ 3\delta + 4\sqrt{\frac{2 K_0}{\gamma \eta T}} \right].
\end{equation}

\textbf{Stability:} For any failure probability $\eta \in (0,1)$ and regularization parameter $\lambda > 0$ satisfying the invertibility threshold $4 \kappa L_\Psi \big( \delta + \sqrt{2 K_0 / \gamma \eta T} \big) \le \lambda$, the regularized estimator $\hat{F}_{T, \lambda}$ satisfies with probability at least $1 - \eta$:
\begin{equation}
    \|\hat{F}_{T, \lambda} - F\|_{L^2(\mu)} \le \kappa \|v\|_{\mathcal{H}} \lambda^r + \frac{2\kappa^2 \|w^*\|_{\mathcal{H}}}{\lambda} \left(\frac{\delta_v}{\|w^*\|_{\mathcal{H}}} + L_\Psi \left[ 3\delta + 4\sqrt{\frac{2 K_0}{\gamma \eta T}} \right]\right).
\end{equation}

\textbf{Convergent Regularization:} By selecting the regularization as $\lambda^* \asymp \mathcal{E}^{\frac{1}{r+1}}$, for any noise level and observation time satisfying $\mathcal{E} \le (2\kappa)^{-\frac{r+1}{r}}$ (particularly as $\delta,\delta_v\to0, T\to\infty$), the invertibility threshold is strictly satisfied. The parameter and function-space errors rigorously converge at the minimax optimal rate $\mathcal{O}(\mathcal{E}^{\frac{r}{r+1}})$:
\begin{equation}
    \|\hat{w}_{T, \lambda^*} - w^*\|_{\mathcal{H}} \le \frac{1}{\kappa} \|\hat{F}_{T, \lambda^*} - F\|_{L^2(\mu)} \le \Big( \|v\|_{\mathcal{H}} + 2\kappa \|w^*\|_{\mathcal{H}} \Big) \, \mathcal{E}^{\frac{r}{r+1}}.
\end{equation}
\end{theorem}

\begin{proof}
We analyze the parameter error by decomposing it into approximation error (bias) and estimation error (variance):
\begin{equation}
    \|\hat{w}_{T, \lambda} - w^*\|_{\mathcal{H}} \le \underbrace{\|\hat{w}_{T, \lambda} - w^*_\lambda\|_{\mathcal{H}}}_{\text{Estimation Error}} + \underbrace{\|w^*_\lambda - w^*\|_{\mathcal{H}}}_{\text{Approximation Error}}.
\end{equation}

We first bound the approximation error. By definition, $w^*_\lambda = (G_\mu + \lambda I)^{-1} G_\mu w^*$. Using the source condition $w^* = G_\mu^r v$ and standard spectral calculus for self-adjoint operators (noting $\sup_{s \ge 0} \frac{\lambda s^r}{s + \lambda} \le \lambda^r$ for $r \in (0, 1]$), the bias is bounded by:
\begin{equation}
    \|w^*_\lambda - w^*\|_{\mathcal{H}} = \left\| -\lambda (G_\mu + \lambda I)^{-1} G_\mu^r v \right\|_{\mathcal{H}} \le \lambda^r \|v\|_{\mathcal{H}}.
\end{equation}

We then bound the estimation error. Algebraically isolating the perturbations yields: 
\begin{align*}
    \hat{w}_{T, \lambda} - w^*_\lambda &= (\tilde{G}_T + \lambda I)^{-1} \Big( \tilde{B}_T - (\tilde{G}_T + \lambda I) w^*_\lambda \Big) \\
    &= (\tilde{G}_T + \lambda I)^{-1} \Big( (\tilde{B}_T - B_\mu) - (\tilde{G}_T - G_\mu) w^*_\lambda \Big).
\end{align*}
Taking the Hilbert space norm, and using $\|w^*_\lambda\|_{\mathcal{H}} \le \|w^*\|_{\mathcal{H}}$:
\begin{equation}
    \|\hat{w}_{T, \lambda} - w^*_\lambda\|_{\mathcal{H}} \le \|(\tilde{G}_T + \lambda I)^{-1}\|_{\mathrm{op}} \Big( \|\tilde{B}_T - B_\mu\|_{\mathcal{H}} + \|\tilde{G}_T - G_\mu\|_{\mathrm{op}} \|w^*\|_{\mathcal{H}} \Big).
\end{equation}

We deterministically bound the noise perturbations using $\kappa$ and $L_\Psi$:
$\|\tilde{G}_T - \hat{G}_T\|_{\mathrm{op}} \le 2 \kappa L_\Psi \delta$ and $\|\tilde{B}_T - \hat{B}_T\|_{\mathcal{H}} \le \kappa \delta_v + \kappa L_\Psi \|w^*\|_{\mathcal{H}} \delta$.
Using \Cref{lem:operator_concentration}, the statistical error is $\|\hat{G}_T - G_\mu\|_{\mathrm{op}} \le 2 \kappa L_\Psi \sqrt{\frac{2 K_0}{\gamma \eta T}} \equiv E_{\mathrm{stat}}$.

The total operator perturbation is bounded by $\Delta_G = 2 \kappa L_\Psi \delta + E_{\mathrm{stat}}$. The invertibility threshold $\Delta_G \le \frac{\lambda}{2}$ ensures via the Neumann series that $\|(\tilde{G}_T + \lambda I)^{-1}\|_{\mathrm{op}} \le \frac{2}{\lambda}$.
Substituting the noise, statistical, and inverse bounds into the estimation error gives:
\begin{align*}
    \|\hat{w}_{T, \lambda} - w^*_\lambda\|_{\mathcal{H}} &\le \frac{2}{\lambda} \Big( \underbrace{\kappa \delta_v + \kappa L_\Psi \|w^*\|_{\mathcal{H}} \delta}_{\|\tilde{B}_T - \hat{B}_T\|_{\mathcal{H}}} + \underbrace{E_{\mathrm{stat}} \|w^*\|_{\mathcal{H}}}_{\|\hat{B}_T - B_\mu\|_{\mathcal{H}}} + \underbrace{\left( 2 \kappa L_\Psi \delta + E_{\mathrm{stat}} \right)}_{\|\tilde{G}_T - G_\mu\|_{\mathrm{op}}} \|w^*\|_{\mathcal{H}} \Big) \\
    &= \frac{2\kappa}{\lambda} \left( \delta_v + 3 L_\Psi \|w^*\|_{\mathcal{H}} \delta + \frac{2}{\kappa} E_{\mathrm{stat}} \|w^*\|_{\mathcal{H}} \right).
\end{align*}
Factoring out $\|w^*\|_{\mathcal{H}}$ and substituting $E_{\mathrm{stat}}$, the bracketed term collapses exactly into the effective error $\mathcal{E}$, simplifying the estimation variance to:
\begin{equation}
    \|\hat{w}_{T, \lambda} - w^*_\lambda\|_{\mathcal{H}} \le \frac{2\kappa \|w^*\|_{\mathcal{H}}}{\lambda} \mathcal{E}.
\end{equation}

The function space error relates to the parameter error via the covariance operator: $\|F - \hat{F}_{T, \lambda}\|_{L^2(\mu)}^2 = \langle \Delta w, G_\mu \Delta w \rangle_{\mathcal{H}} \le \|G_\mu\|_{\mathrm{op}} \|\Delta w\|_{\mathcal{H}}^2$. Since $\|G_\mu\|_{\mathrm{op}} \le \kappa^2$, we have $\|F - \hat{F}_{T, \lambda}\|_{L^2(\mu)} \le \kappa \|\Delta w\|_{\mathcal{H}}$.
Multiplying our parameter bias and variance bounds by $\kappa$ completes the proof of general stability.

To prove minimax convergent regularization, we explicitly substitute the regularization parameter $\lambda^* = \mathcal{E}^{\frac{1}{r+1}}$ into the parameter error bound. Evaluating the approximation bias term yields:
\begin{equation}
    \|v\|_{\mathcal{H}} (\lambda^*)^r = \|v\|_{\mathcal{H}} \left( \mathcal{E}^{\frac{1}{r+1}} \right)^r = \|v\|_{\mathcal{H}} \mathcal{E}^{\frac{r}{r+1}}.
\end{equation}
Evaluating the estimation variance term yields:
\begin{equation}
    \frac{2\kappa \|w^*\|_{\mathcal{H}}}{\lambda^*} \mathcal{E} = 2\kappa \|w^*\|_{\mathcal{H}} \, \mathcal{E} \left( \mathcal{E}^{\frac{1}{r+1}} \right)^{-1} = 2\kappa \|w^*\|_{\mathcal{H}} \mathcal{E}^{\frac{r}{r+1}}.
\end{equation}
Summing these evaluated components and factoring out the common geometric rate $\mathcal{E}^{\frac{r}{r+1}}$ provides the optimal parameter convergence bound, from which the function space bound directly follows via the $\kappa$ scaling. This matches the known minimax optimal learning rate for inverse problems under a Hölder source condition of regularity $r$ \citep{caponnetto2007optimal}.

Finally, we rigorously verify that this optimal choice of $\lambda^*$ satisfies the assumed invertibility threshold. We can explicitly bound the left-hand side of the threshold:
\begin{align}
    4 \kappa L_\Psi \left( \delta + \sqrt{\frac{2 K_0}{\gamma \eta T}} \right) &\le 2\kappa \left( \frac{\delta_v}{\|w^*\|_{\mathcal{H}}} + L_\Psi \left[ 3\delta + 4\sqrt{\frac{2 K_0}{\gamma \eta T}} \right] \right) = 2\kappa \mathcal{E}.
\end{align}
To ensure the threshold holds, it is therefore sufficient to enforce $2\kappa \mathcal{E} \le \lambda^*$. Substituting $\lambda^* = \mathcal{E}^{\frac{1}{r+1}}$ and dividing by $\mathcal{E}$ (assuming $\mathcal{E} > 0$) yields:
\begin{equation}
    2\kappa \le \mathcal{E}^{\frac{1}{r+1} - 1} = \mathcal{E}^{-\frac{r}{r+1}}.
\end{equation}
Rearranging yields the required condition $\mathcal{E} \le (2\kappa)^{-\frac{r+1}{r}}$. Thus, for a sufficiently small effective observation error $\mathcal{E}$, the invertibility threshold is unconditionally satisfied by the optimal regularization parameter, completing the proof.
\end{proof}

\subsection{Proof of \Cref{thm:first_integral}}\label{app:first_integrals}
\begin{figure}[t]
    \centering
    \includegraphics[width=.5\linewidth]{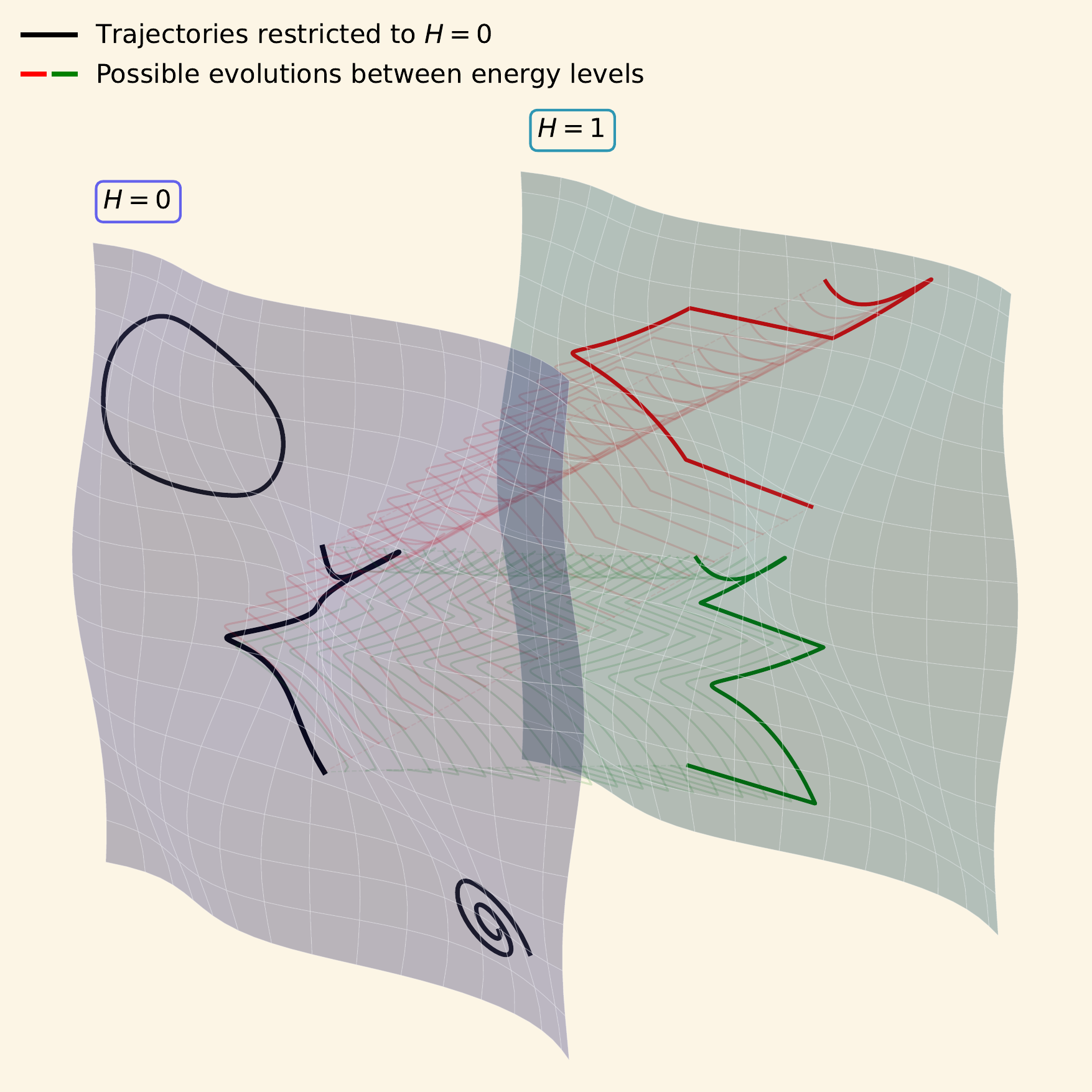}
    \caption{Illustration of the main argument behind \Cref{thm:first_integral}. An analytic first integral foliates the underlying space into analytic surfaces, to which each trajectory is restricted. As each analytic surface is a zero set of an analytic first integral, it is not a set of uniqueness, and trajectories can evolve arbitrarily between the leaves of the foliation, while still preserving the first integral, leading to non-discoverability.}
    \label{fig:firstintegral_indiscovery}
\end{figure}
\begin{theorem}
If an analytic vector field $F$ admits a non-constant global analytic first integral $G$, then no trajectory of $F$ is a set of uniqueness, and thus $F$ is not analytically discoverable.
\end{theorem}
\begin{proof}
Let $G:\Rl^d\to\Rl$ be a non-constant global analytic first integral for the system $\dot u = F(u)$. I.e. $\nabla G\cdot F = 0$. Our goal is to show that an arbitrary trajectory is not a set of uniqueness. Let $x_0\in\Rl^d$ be any initial point and let $S=\Orb(x_0,F)$ be the trajectory passing through it. To show that $S$ is not a set of uniqueness, we must construct a non-zero analytic function $H$ that vanishes on $S$. The time derivative of $G$ along this trajectory is:
$$ \frac{d}{dt} G(u(t)) = \nabla G(u(t)) \cdot \dot{u}(t) = \nabla G(u(t)) \cdot F(u(t)) = 0 $$
This implies that $G(u(t))$ is constant for all $t$. Let this constant be $c_0 = G(v)$. Define a new function $H(u) = G(u) - c_0$. Then as $G$ is a non-constant analytic function, $H$ is an analytic function which isn't identically zero. Now take $u$ any point on the trajectory $\Orb(x, F)$. Then $u = u(t)$ for some $t$ and hence $H(u) = G(u(t)) - c_0 = 0$. 
Therefore, $S$ is not a set of uniqueness, and by \Cref{prop:general-uniqueness}, $F$ is not analytically discoverable.
\end{proof}

\subsection{Proofs of \Cref{cor:uniq_sho,cor:uniq_sho_2}}\label{ap:corollaries}

\begin{corollary}
    The harmonic oscillator of \Cref{eq:simple_eg1} is discoverable from any single trajectory away from the origin, given the conservation law 
    $H = \frac12(x\dot{x}+y\dot{y})^2+ \frac12(\dot{x}^2 + \dot{y}^2 - x^2 - y^2)^2$.
\end{corollary}
\begin{proof}
    For this, we simply need to check that the conditions of \Cref{lem:uniq_w_extra} hold. 

Let $u = (x,y)$ and $v = (\dot{x},\dot{y})$. The function can be written as:
$$ H(v, u) = \frac{1}{2}(u \cdot v)^2 + \frac{1}{2}(\|v\|^2 - \|u\|^2)^2.$$
The gradient of $H$ with respect to $v$ is:
$$ \nabla_{v} H = (u \cdot v)u + 2(\|v\|^2 - \|u\|^2)v $$
At the solution $v = F(u)$, we have $u \cdot F(u) = 0$ and $\|F(u)\|^2 = \|u\|^2$, which gives $\nabla_{v} H(F(u), u)= 0$. The Hessian matrix, $\operatorname{hess}_{v}H$, evaluated at the solution simplifies to:
$$ \nabla^2_{v}H(F(u),u) = uu^{\top} + 4F(u)F(u)^{\top} = 
\begin{pmatrix} x^2+4y^2 & -3xy \\ -3xy & y^2+4x^2 \end{pmatrix} $$
The determinant is:
$\det(\nabla^2_{v}H) = (x^2+4y^2)(y^2+4x^2) - (-3xy)^2 = 4(x^2+y^2)^2 = 4\|u\|^4 $. 
Thus, for any $u$ away from the origin, the Hessian is non-singular.
\end{proof}

\begin{nb}
    Note that the classical Hamiltonian or its derivatives themselves would not be satisfactory in this case. Intuitively, this arises because the same periodic trajectory can be achieved via a radius-dependent angular speed as in \Cref{eq:simple_eg2}. Thus, in order to achieve uniqueness, one requires specifying the speed itself, which the $H$ above does. 
\end{nb}

\begin{corollary}
    The harmonic oscillator of \Cref{eq:simple_eg1} is discoverable within the space of vector fields with skew-symmetric Jacobians from any single trajectory away from the origin, given the conservation law 
    $H = \frac12(x\dot{x}+y\dot{y})^2$
\end{corollary}
\begin{proof}
    For this, we simply need to check that the conditions of \Cref{thm:uniq_symmjac} hold. Let $H(v, u) = (u^\top v)^2$. The gradient with respect to $v$ is $\nabla_{v} H = 2(u^\top v)u = 0$ on the trajectory. The Hessian of $H$ with respect to $v$ is $C = \nabla_{v}^2 H = 2uu^\top$. Let us pick the point $p = (1, 0) \in \mathcal{M}$.
At this point, the Hessian is:
$$C(p) = 2\begin{pmatrix} 1 \\ 0 \end{pmatrix}\begin{pmatrix} 1 & 0 \end{pmatrix} = \begin{pmatrix} 2 & 0 \\ 0 & 0 \end{pmatrix}$$
The kernel of this matrix is $\ker(C) = \text{span}\left\{(0, 1)^\top\right\}$.
The tangent space to the circle $\mathcal{M}$ at $p=(1,0)$ is the vertical line, so $T_{p}\mathcal{M} = \text{span}\left\{(0, 1)^\top\right\}$.
Therefore, $\ker(C) = T_{p}\mathcal{M}$, which satisfies the condition $\ker(C) \subseteq T_{p}\mathcal{M}$.

\end{proof}

\section{Sheaf Theory} \label{app:sheaf-theory}
Sheaves were introduced by Leray in the late 1940s in order to study maps of topological spaces $\pi : E \rightarrow E^*$ and their effects on homology. The main purpose of introducing sheaf theory is to better relate global properties, that one is primarily interested in, with local properties, that are often much easier to check. In this appendix we shall give an extremely brief introduction to sheaves, just enough to cover the technical material used in the main theorems of the article. For a more comprehensive introduction, there are many possible sources to consult but the closest to the flavor touched on here would be \cite{griffithsharris}. We begin by introducing pre-sheaves of rings.

\begin{definition}
    Let $X$ be a topological space. A pre-sheaf of rings $\mathcal{F}$ on $X$ is an assignment of a ring $\mathcal{F}(U)$ to every open set $U \subseteq X$ as well as a ring homomorphism $\rho_{U, V} : \mathcal{F}(U) \rightarrow \mathcal{F}(V)$ for every inclusion $V \subseteq U$. These homomorphisms must satisfy
    \begin{itemize}
        \item[(i)] For every $W \subseteq V \subseteq U$, $\rho_{U, W} = \rho_{V, W} \circ \rho_{U, V}$
        \item[(ii)] For every $U$, $\rho_{U, U} = id$
    \end{itemize}
    We shall call elements of $\mathcal{F}(U)$ sections, and for an inclusion $V \subseteq U$ the image of $s \in \mathcal{F}(U)$ inside of $\mathcal{F}(V)$ is often called the restriction of $s$, written $s|_V$. 

    A morphism of presheaves $\phi : \mathcal{F} \rightarrow \mathcal{G}$ is given by a collection of ring homomorphisms $\phi_U : \mathcal{F}(U) \rightarrow \mathcal{G}(U)$ such that for any $V \subseteq U$ we have
    \begin{equation*}
        \phi_V \circ \rho^{\mathcal{F}}_{U, V} = \rho^{\mathcal{G}}_{U, V} \circ \phi_U
    \end{equation*}
\end{definition}

A presheaf holds the data of generalized functions on the space $X$. It is a sheaf if those generalized functions behave in a similar manner to how regular functions behave.

\begin{definition}
    A presheaf $\mathcal{F}$ is a sheaf if it satisfies additionally the following two properties for any open cover $\{U_i\}_{i \in I}$ of an open $U$,
    \begin{itemize}
        \item[(i)] If $s, s' \in \mathcal{F}(U)$ satisfy $s|_{U_i} = s'|_{U_i}$ for all $i \in I$ then $s = s'$.
        \item[(ii)] If $s_i \in \mathcal{F}(U_i)$ is a collection of sections such that for all $i, j \in I$, $s_i|_{U_i \cap U_j} = s_j|_{U_i \cap U_j}$ then there exists a section $s \in \mathcal{F}(U)$ such that $s|_{U_i} = s_i$ for all $i \in I$. 
    \end{itemize}

    A morphism of sheaves is simply a morphism of presheaves.
\end{definition}

The first of these conditions is often called the identity axiom, and tells us that we can determine the global value of a section based off of its local restrictions on an open cover. The second is the gluing axiom, which tells us that we can glue local sections as long as they agree on common overlaps. Note that by the identity axiom, the glued section is unique. 

\begin{example}
    Let $\mathcal{F}(U) = C^0(U, \mathbb{R})$ be the ring of continuous functions on $U$ with ring operations defined pointwise. The restriction map $\mathcal{F}(U) \rightarrow \mathcal{F}(V)$ is given by restriction of functions. This example generalizes to any class of functions that is closed under pointwise operations, e.g. ,$C^k$-functions or analytic functions.
\end{example}

Given a point $x \in X$ we may consider the stalk of a sheaf $\mathcal{F}$ at the point $x$. 

\begin{definition}
    The stalk $\mathcal{F}_x$ is the colimit of $\mathcal{F}(U)$ over all open sets $U$ containing $x$. Concretely, this has an underlying set 
    \begin{equation*}
        \mathcal{F}_x = \{(U, s) \mid x \in U \text{ open}, s \in \mathcal{F}(U)\} / \sim
    \end{equation*}
    where two pairs $(U, s)$ and $(V, s')$ are identified if there is a open $W$ containing $x$ such that $W \subseteq U, V$ and $s|_W = s'|_W$. This set becomes a ring as follows
    \begin{equation*}
        (U, s) + (V, s') = (U \cap V, s|_{U \cap V}  +s'|_{U \cap V})
    \end{equation*}
    \begin{equation*}
        (U, s) \cdot (V, s') = (U \cap V, s|_{U \cap V} \cdot s'|_{U \cap V})
    \end{equation*}

    For any $x \in U$ there is a map $\mathcal{F}(U) \rightarrow \mathcal{F}_x$ given by $s \mapsto (U, s) \in \mathcal{F}_x$. We shall often write the image of $s$ as $s_x$, this is called the germ of $s$ at $x$. 
\end{definition}

Sheaves also interact nicely with maps of the underlying topological spaces. Fix a continuous map $f : X \rightarrow Y$. Then we can pushforward sheaves on $X$ to sheaves on $Y$.

\begin{definition}\label{def:pushforward}
    Let $\mathcal{F}$ be a sheaf on $X$. Then the pushforward sheaf (by the map $f$) is given on an open $U \subseteq Y$ by
    \begin{equation*}
        f_*\mathcal{F}(U) = \mathcal{F}(f^{-1}(U))
    \end{equation*}
\end{definition}

For our purposes, the most important sheaves to consider will be the sheaf of real-analytic functions on $\mathbb{R}^d$ and ideal sheaves of these. We let $\mathcal{O}_{\mathbb{R}^d}$ be the sheaf of real-analytic functions on $\mathbb{R}^d$. If the dimension is clear, we will also sometimes write this as $\mathcal{O}$. First, we introduce ideal sheaves.

\begin{definition}
    A subsheaf $\mathcal{I} \subseteq \mathcal{O}$ is an ideal sheaf if for each open $U$, $\mathcal{I}(U)$ is an ideal inside of $\mathcal{O}(U)$.
\end{definition}

The most common way ideal sheaves come about is looking at the subsheaf of $\mathcal{O}$ consisting of functions which vanish on a fixed set $Z \subseteq \mathbb{R}^d$. 

\begin{definition}\label{def:IdealSheafOfZ}
    For a subset $Z \subseteq \mathbb{R}^d$ we define the sheaf $\mathcal{I}_Z$ by
    \begin{equation*}
        \mathcal{I}_Z(U) = \{f \in \mathcal{O} \mid f(x) = 0 \quad \forall x \in Z \}
    \end{equation*}
    It is easy to check that this is a sheaf, as the condition of vanishing on $Z$ can be checked pointwise. It is an ideal sheaf as it is the kernel of evaluation at $Z$.
\end{definition}

We can now introduce the most important concept for our purposes and the reason for using sheaves in the first place. Coherence is a certain finiteness statement about sheaves. 

\begin{definition}
    A sheaf $\mathcal{F}$ on $\mathbb{R}^d$ is finite type if every point has an open neighborhood $U$ such that there is a surjection $\mathcal{O}^n|_U \rightarrow \mathcal{F}|_U$.
    
    It is coherent if it is finite type and for any open set $U$, natural number $n$, and surjection $\phi : \mathcal{O}^n|_U \rightarrow \mathcal{F}|_U$, the kernel of $\phi$ is finite type.
\end{definition}

On $\mathbb{R}^d$, coherent sheaves are very special, as shown by the following theorem.

\begin{theorem}[Cartan's Theorem A] \label{thm:cartans}
    Let $\mathcal{F}$ be a coherent sheaf on $\mathbb{R}^d$. Then $\mathcal{F}$ is generated by global sections, namely there are global section $s_1, \dots, s_n \in \mathcal{F}(\mathbb{R}^d)$ such that for any point $x \in \mathbb{R}^d$ the germs $s_{1,x}, \dots, s_{n, x}$ generate the stalk $\mathcal{F}_x$ as an $\mathcal{O}_x$-module.
\end{theorem}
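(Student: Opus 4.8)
The plan is to deduce the real-analytic statement from the classical complex-analytic Cartan Theorem~A on Stein manifolds by passing to a complexification. I would view $\mathbb{R}^d$ as the totally real subset of $\mathbb{C}^d$ fixed by complex conjugation, and recall that for a real point $x$ the stalk $\mathcal{O}_{\mathbb{R}^d,x}$ of convergent real power series is the real form of the stalk $\mathcal{O}_{\mathbb{C}^d,x}$ of convergent complex power series, i.e. $\mathcal{O}_{\mathbb{C}^d,x} \cong \mathcal{O}_{\mathbb{R}^d,x} \otimes_{\mathbb{R}} \mathbb{C}$, and that restriction of a holomorphic germ to real points is injective because $\mathbb{R}^d$ is totally real. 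The first step is therefore to \emph{complexify the sheaf}: given a coherent $\mathcal{O}_{\mathbb{R}^d}$-module $\mathcal{F}$, local finite presentations $\mathcal{O}^p \to \mathcal{O}^q \to \mathcal{F} \to 0$ are defined by matrices with real-analytic entries, each of which extends to a holomorphic matrix on a complex neighborhood of the corresponding real chart. I would glue these local complex presentations into a coherent $\mathcal{O}_{\mathbb{C}^d}$-module $\widetilde{\mathcal{F}}$ on an open neighborhood $\Omega \subseteq \mathbb{C}^d$ of $\mathbb{R}^d$, arranged so that its restriction to $\mathbb{R}^d$ recovers $\mathcal{F}$ (up to the standard real-versus-complex coefficient bookkeeping).

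The second step is to \emph{shrink to a Stein neighborhood}. Here I would invoke Grauert's theorem that $\mathbb{R}^d$ admits a fundamental system of Stein open neighborhoods inside $\mathbb{C}^d$, and choose such a Stein $\Omega' \subseteq \Omega$ still containing $\mathbb{R}^d$; restricting, $\widetilde{\mathcal{F}}|_{\Omega'}$ is a coherent analytic sheaf on the Stein manifold $\Omega'$. The third step is to apply the \emph{classical Cartan Theorem~A} for coherent sheaves on Stein manifolds, treated here as a black box: there exist finitely many global sections $\widetilde{s}_1, \dots, \widetilde{s}_n \in \widetilde{\mathcal{F}}(\Omega')$ whose germs generate $\widetilde{\mathcal{F}}_z$ over $\mathcal{O}_{\mathbb{C}^d,z}$ at every $z \in \Omega'$. (This deep input rests in turn on Theorem~B and the solution of the $\bar{\partial}$-problem.)

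Finally I would \emph{descend by restriction}. Setting $s_i = \widetilde{s}_i|_{\mathbb{R}^d} \in \mathcal{F}(\mathbb{R}^d)$, at each real point $x$ the germs $\widetilde{s}_{i,x}$ give a surjection $\mathcal{O}_{\mathbb{C}^d,x}^{\,n} \twoheadrightarrow \widetilde{\mathcal{F}}_x$. Applying the base change along the restriction homomorphism $\mathcal{O}_{\mathbb{C}^d,x} \to \mathcal{O}_{\mathbb{R}^d,x} \otimes_{\mathbb{R}} \mathbb{C}$ and using right-exactness of the tensor product shows that the restricted germs generate $\mathcal{F}_x \otimes_{\mathbb{R}} \mathbb{C}$; splitting generators (and coefficients) into real and imaginary parts then yields generation of $\mathcal{F}_x$ over $\mathcal{O}_{\mathbb{R}^d,x}$. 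As $x \in \mathbb{R}^d$ was arbitrary, $\mathcal{F}$ is generated by the global sections $s_1, \dots, s_n$, as required.

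The main obstacle is the first step, the global complexification of a coherent real-analytic sheaf: locally extending presentation matrices to holomorphic ones is routine, but patching the resulting complex sheaves over a neighborhood of all of $\mathbb{R}^d$ into a single coherent $\mathcal{O}_{\mathbb{C}^d}$-module, while controlling $\Omega$ so that it does not pinch onto $\mathbb{R}^d$ and so that the restriction identity $\mathcal{F} = \widetilde{\mathcal{F}}|_{\mathbb{R}^d}$ holds, requires genuine care. This is precisely the content of the real-analytic coherence theory, on which I would rely \cite{Acquistapace2022}; everything downstream of it is either the classical Stein theory or elementary descent.
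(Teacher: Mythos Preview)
The paper does not prove this theorem: it is stated in the background appendix as a classical input and used as a black box (in \Cref{lem:SetOfUniquenessIfAndOnlyIf} and the surrounding arguments). So there is no ``paper's own proof'' to compare against.

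Your outline is the standard route to the real-analytic Cartan theorems: complexify the coherent $\mathcal{O}_{\mathbb{R}^d}$-module to a coherent $\mathcal{O}_{\mathbb{C}^d}$-module on a complex neighborhood, shrink to a Stein tube via Grauert, apply the complex-analytic Theorem~A, and descend by taking real and imaginary parts. The delicate point you correctly flag is the global complexification of the sheaf; this is exactly where the real-analytic coherence theory (Cartan, Tognoli, Guaraldo--Macr\`i--Tancredi, and the reference you cite) does real work, and it is appropriate to rely on it.

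One genuine gap to note: the classical Cartan Theorem~A on a Stein manifold asserts only that each stalk is generated by germs of global sections, not that a single \emph{finite} family $\widetilde{s}_1,\dots,\widetilde{s}_n$ generates at every point simultaneously. You assert the latter without justification. The paper's statement also asks for finitely many generators, so strictly speaking this requires an extra argument (e.g.\ a Forster--Ramspott type bound, or an ad hoc argument using Theorem~B and the noetherianity of the stalks together with a suitable exhaustion). However, for every application the paper actually makes of \Cref{thm:cartans}, one only needs the weaker conclusion that a coherent sheaf with a nonzero stalk admits a nonzero global section, and your argument already delivers that.
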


Coherence is an imposing condition to check in practice, as we need to guarantee finite generation of the kernel of any surjection $\mathcal{O}^n \rightarrow \mathcal{F}$. The following theorem, which is usually stated for complex analytic functions, but holds for real analytic functions by taking real and imaginary parts, combined with the standard fact that subsheaves of coherent sheaves are coherent if and only if they are locally finitely-generated implies that checking coherence of ideal sheaves on $\mathbb{R}^d$ comes down to checking local finite generation.

\begin{theorem}[Oka's Coherence Theorem]\label{thm:OkasTheorem}
    $\mathcal{O}_{\mathbb{R}^d}$ is coherent.
\end{theorem}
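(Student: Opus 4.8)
The plan is to reduce the statement to the classical complex-analytic Oka theorem and then transfer it to the real-analytic setting by complexification. Since $\mathcal{O} = \mathcal{O}_{\R^d}$ is generated by the constant section $1$, it is automatically of finite type, so coherence reduces to showing that for every open $U$ and every tuple $f_1, \dots, f_p \in \mathcal{O}(U)$, the relation sheaf
\[
\mathcal{R}(f_1,\dots,f_p) = \ker\!\left(\mathcal{O}^p|_U \xrightarrow{(g_i)\mapsto \sum_i g_i f_i} \mathcal{O}|_U\right)
\]
is of finite type. This is a local condition, so it suffices to prove that for each $x \in U$ the stalk $\mathcal{R}(f_1,\dots,f_p)_x$ is a finitely generated module over the local ring $\mathcal{O}_x = \R\{t_1-x_1,\dots,t_d-x_d\}$, together with the fact that finitely many generating germs spread to generators on a neighborhood.

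First I would carry out the reduction to the holomorphic case. Complexifying, the stalk $\mathcal{O}_x = \R\{t\}$ of germs of real-analytic functions sits inside the stalk $\mathcal{O}^{\C}_x = \C\{z\}$ of germs of holomorphic functions at $x \in \C^d$, and in fact $\mathcal{O}^{\C}_x = \mathcal{O}_x \otimes_{\R} \C$. Each $f_i$ extends to a holomorphic germ $\tilde f_i \in \mathcal{O}^{\C}_x$. Because $\R \to \C$ is faithfully flat and tensoring is exact, it preserves kernels, giving
\[
\mathcal{R}(f_1,\dots,f_p)_x \otimes_{\R} \C \;=\; \mathcal{R}(\tilde f_1,\dots,\tilde f_p)_x .
\]
Granting the complex Oka theorem, the right-hand side is a finitely generated $\mathcal{O}^{\C}_x$-module. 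Faithfully flat descent of finiteness then yields that $\mathcal{R}(f_1,\dots,f_p)_x$ is finitely generated over $\mathcal{O}_x$: one lifts finitely many $\C$-generators to elements of the real relation module, and observes that the cokernel of the induced map from a free $\mathcal{O}_x$-module vanishes after $\otimes_{\R}\C$, hence vanishes by faithful flatness.

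The heart of the argument is therefore the complex statement: every relation sheaf $\mathcal{R}(\tilde f_1,\dots,\tilde f_p)$ of holomorphic germs is of finite type. I would prove this by induction on $n$. The base case $n=0$ is trivial, as $\mathcal{O}_0 = \C$ is a field. For the inductive step, after a linear change of coordinates I may assume some $\tilde f_i$, say $\tilde f_1$, is $z_n$-regular of order $k$; by the Weierstrass Preparation Theorem, $\tilde f_1 = u W$ with $u$ a unit and $W$ a Weierstrass polynomial of degree $k$ in $z_n$ with coefficients in $\C\{z_1,\dots,z_{n-1}\}$. The Weierstrass Division Theorem then lets me divide each $\tilde f_i$ by $W$, replacing the generators by remainders that are polynomials of degree $< k$ in $z_n$. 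Tracking how a syzygy of the $\tilde f_i$ decomposes into a quotient part and a remainder part, one re-expresses the relation module in terms of relations among the finitely many $z_n$-coefficients, which live in $\C\{z_1,\dots,z_{n-1}\}$ and are controlled by the inductive hypothesis; reassembling these produces finitely many generators of $\mathcal{R}(\tilde f_1,\dots,\tilde f_p)$ near the point.

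The main obstacle is this inductive bookkeeping in the complex case: the division introduces both quotient and remainder data, and one must show that the syzygies of the reduced, polynomial-in-$z_n$ system, together with the relation encoding $W \mid \tilde f_1$, generate \emph{all} syzygies of the original system, and moreover that the resulting generators extend from the stalk to a full neighborhood so as to give finite type as a sheaf rather than merely finite generation of a single stalk. This is precisely the classical content of Oka's theorem; by comparison the real-to-complex descent and the finite-type reformulation surrounding it are routine.
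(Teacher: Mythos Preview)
The paper does not actually prove this theorem; it merely states it as a classical fact, remarking in the preceding paragraph that the result ``is usually stated for complex analytic functions, but holds for real analytic functions by taking real and imaginary parts.'' Your proposal is a correct and considerably more detailed expansion of precisely that one-line justification: you make the complexification $\mathcal{O}_x^{\C} = \mathcal{O}_x \otimes_{\R} \C$ explicit, invoke faithful flatness to transfer finite generation of the relation module back to the real side, and then sketch the standard Weierstrass-preparation induction for the complex Oka theorem. So there is no divergence in approach to report; you have simply supplied the argument the paper only gestures at.
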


We will also use the following useful lemma about coherent sheaves \cite[\href{https://stacks.math.columbia.edu/tag/01BY}{Tag 01BY}]{stacks-project}. Before stating it we introduce the kernel of a morphism of sheaves.

\begin{definition}\label{def:sheafkernel}
    Let $\phi : \mathcal{F} \rightarrow \mathcal{G}$ be a morphism of sheaves. Then the kernel of this morphism $\ker(\phi)$ is the sheaf given by
    \begin{equation*}
        \ker(\phi)(U) = \{ s \in \mathcal{F}(U) \mid \phi_U(s) = 0 \}
    \end{equation*}
\end{definition}

\begin{lemma}\label{lem:CoherentAbelianCategory}

    \begin{itemize}
        \item[(i)] Any finite type subsheaf of a coherent sheaf is coherent.
        \item[(ii)] If $\phi : \mathcal{F} \rightarrow \mathcal{G}$ is a morphism of coherent sheaves then the kernel $\ker(\phi)$ is coherent.
    \end{itemize}
\end{lemma}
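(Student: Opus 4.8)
The plan is to reduce both parts to a single workhorse statement, call it Lemma A: if $\varphi : \mathcal{H} \to \mathcal{G}$ is a morphism of $\mathcal{O}$-modules with $\mathcal{H}$ of finite type and $\mathcal{G}$ coherent, then $\ker(\varphi)$ is of finite type. The operative consequence of coherence I will use is that for a coherent sheaf $\mathcal{G}$ and \emph{any} morphism $\psi : \mathcal{O}^n|_U \to \mathcal{G}|_U$ the kernel $\ker(\psi)$ is of finite type; for $\mathcal{G} = \mathcal{O}$ this is precisely the finiteness of relation sheaves furnished by Oka's \Cref{thm:OkasTheorem}. Beyond Lemma A everything is formal bookkeeping with finite type sheaves, using repeatedly that ``finite type'' is a local property that is stable under taking images, since locally an image is a quotient of a finite free module.

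To prove Lemma A I would argue locally at a point $x$. Since $\mathcal{H}$ is of finite type, on a neighbourhood $U$ of $x$ there is a surjection $p : \mathcal{O}^n|_U \to \mathcal{H}|_U$. Set $\psi = \varphi|_U \circ p : \mathcal{O}^n|_U \to \mathcal{G}|_U$, so coherence of $\mathcal{G}$ gives that $\ker(\psi)$ is of finite type. The key point is the identity $\ker(\varphi)|_U = p(\ker(\psi))$, which I verify on stalks: for $y \in U$ the map $p_y$ is surjective, so any $h \in \ker(\varphi_y)$ lifts to some $a \in \mathcal{O}^n_y$ with $p_y(a) = h$ and $\psi_y(a) = \varphi_y(h) = 0$, whence $h \in p_y(\ker(\psi)_y)$; the reverse inclusion is immediate. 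Thus $\ker(\varphi)|_U$ is the image of the finite type sheaf $\ker(\psi)$ under $p$, hence finite type. As $x$ was arbitrary, $\ker(\varphi)$ is of finite type.

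Granting Lemma A, part (i) is short. Let $\mathcal{G} \subseteq \mathcal{F}$ be a finite type subsheaf with $\mathcal{F}$ coherent, and let $\pi : \mathcal{O}^n|_U \to \mathcal{G}|_U$ be any surjection. Composing with the inclusion $\iota : \mathcal{G} \hookrightarrow \mathcal{F}$ gives $\iota \circ \pi : \mathcal{O}^n|_U \to \mathcal{F}|_U$ with $\ker(\iota \circ \pi) = \ker(\pi)$ because $\iota$ is injective; by Lemma A (source $\mathcal{O}^n|_U$, target the coherent $\mathcal{F}$) this kernel is of finite type, so $\mathcal{G}$ is coherent. For part (ii), let $\varphi : \mathcal{F} \to \mathcal{G}$ be a morphism of coherent sheaves. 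Then $\mathcal{F}$ is of finite type and $\mathcal{G}$ is coherent, so Lemma A shows $\ker(\varphi)$ is a finite type subsheaf of $\mathcal{F}$; applying part (i) to this subsheaf of the coherent sheaf $\mathcal{F}$ yields that $\ker(\varphi)$ is coherent.

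The main obstacle, and the only non-formal content, is Lemma A, and within it the single delicate step where coherence of $\mathcal{G}$ is applied to the composite $\psi$, which need not be surjective. This is handled by using the standard equivalent form of coherence, in which the finite-type-kernel condition is imposed for \emph{all} morphisms out of finite free modules rather than only surjective ones; this is exactly what Oka's \Cref{thm:OkasTheorem} delivers for $\mathcal{O}$ itself and what propagates to every coherent sheaf. Once this is in hand, Lemma A is immediate and (i)--(ii) follow formally. The remaining ingredients -- locality of the finite type property, its stability under images, and the stalkwise identity $\ker(\varphi) = p(\ker(\psi))$ -- are routine, so the entire argument rests ultimately on the single analytic input of \Cref{thm:OkasTheorem}.
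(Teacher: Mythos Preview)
The paper does not actually prove this lemma; it simply states it with a citation to \cite[Tag 01BY]{stacks-project}. Your proof is correct and follows essentially the standard argument found there: isolate the key Lemma~A, prove it locally via the stalkwise identity $\ker(\varphi)=p(\ker(\psi))$, and then deduce (i) and (ii) formally.

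You also correctly flag the one place where care is needed---the paper's definition of coherence quantifies only over \emph{surjections} $\mathcal{O}^n|_U\to\mathcal{F}|_U$, whereas Lemma~A requires the finite-type-kernel conclusion for an arbitrary such map---and you rightly identify Oka's \Cref{thm:OkasTheorem} as the input that closes this gap. If you want to make the ``propagation'' step explicit rather than asserted: given any $\alpha:\mathcal{O}^n|_U\to\mathcal{F}|_U$, adjoin a local surjection $\beta:\mathcal{O}^m|_U\to\mathcal{F}|_U$ so that $(\alpha,\beta):\mathcal{O}^{n+m}|_U\to\mathcal{F}|_U$ is surjective with finite-type kernel $K$ by the paper's definition; then $\ker(\alpha)$ is the kernel of the second projection $K\to\mathcal{O}^m|_U$, and your Lemma~A mechanism applied with target $\mathcal{O}^m$ (standard-coherent by Oka and induction on $m$) finishes. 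With that one line added, the argument is complete and, as you say, rests solely on \Cref{thm:OkasTheorem}.
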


\section{Analytic Trajectories} \label{app:trajectories}
Fix an analytic map $\phi : D \rightarrow \mathbb{R}^d$, where $D \subseteq \mathbb{R}$ is a connected domain. We ask whether the image of this map, $\gamma = \phi(\mathbb{R})$ is a set of uniqueness for real-analytic functions. Equivalently, we are asking whether the closure $Z = \overline{\gamma}$ is a set of uniqueness for real-analytic functions, as any real-analytic function vanishing on $\gamma$ necessarily vanishes on $Z$. Assume that $Z \neq \mathbb{R}^d$ as otherwise it is trivially a set of uniqueness. Recall the ideal sheaf of $Z$ defined in \Cref{def:IdealSheafOfZ} and the pushforward sheaf $\phi_*\mathcal{O}_D$ defined in \Cref{def:pushforward}.

We have a map $\mathcal{O} \rightarrow \phi_*\mathcal{O}_D$ given by sending a function $f : U \rightarrow \mathbb{R}$ to the function $f \circ \phi : \phi^{-1}(U) \rightarrow \mathbb{R}$. 

\begin{lemma}
    $\mathcal{I}$ is the kernel of $\mathcal{O} \rightarrow \phi_*\mathcal{O}_D$.
\end{lemma}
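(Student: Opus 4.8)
The plan is to verify this equality of subsheaves of $\mathcal{O}$ section by section, by directly unwinding the definition of the kernel of a morphism of sheaves of rings. Fix an open set $U \subseteq \mathbb{R}^d$. Since $(\phi_*\mathcal{O}_D)(U) = \mathcal{O}_D(\phi^{-1}(U))$ and the morphism sends $f \in \mathcal{O}(U)$ to $f \circ \phi$, the kernel sheaf evaluated on $U$ is $\{f \in \mathcal{O}(U) : f \circ \phi = 0 \text{ in } \mathcal{O}_D(\phi^{-1}(U))\}$, while $\mathcal{I}(U) = \{f \in \mathcal{O}(U) : f|_{Z \cap U} = 0\}$ by \Cref{def:IdealSheafOfZ}. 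So it suffices to show, for each open $U$, that $f \circ \phi \equiv 0$ on $\phi^{-1}(U)$ if and only if $f$ vanishes on $Z \cap U$.

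A preliminary observation I would record is the elementary set identity $\phi(\phi^{-1}(U)) = \gamma \cap U$: the inclusion $\subseteq$ is immediate, and if $y \in \gamma \cap U$ then $y = \phi(t)$ for some $t$ in the domain with $t \in \phi^{-1}(U)$, giving $\supseteq$. With this, the direction $\mathcal{I}(U) \subseteq (\ker)(U)$ is easy: if $f|_{Z\cap U} = 0$, then since $\gamma \cap U \subseteq \overline{\gamma} \cap U = Z \cap U$, the function $f$ vanishes on $\gamma \cap U = \phi(\phi^{-1}(U))$, so $f(\phi(t)) = 0$ for every $t \in \phi^{-1}(U)$, i.e. $f \circ \phi \equiv 0$.

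For the converse $(\ker)(U) \subseteq \mathcal{I}(U)$: suppose $f \circ \phi \equiv 0$ on $\phi^{-1}(U)$. By the identity above, $f$ vanishes on $\gamma \cap U$, hence on its closure $\overline{\gamma \cap U}$ in $\mathbb{R}^d$ by continuity of $f$ (analytic functions are continuous). The key point is the topological identity $Z \cap U = \overline{\gamma} \cap U = \overline{\gamma \cap U} \cap U$, valid because $U$ is open: the inclusion $\overline{\gamma \cap U} \cap U \subseteq \overline{\gamma} \cap U$ holds since $\gamma \cap U \subseteq \gamma$, and conversely any $y \in \overline{\gamma} \cap U$ is a limit of points of $\gamma$ that eventually lie in the open set $U$, so $y \in \overline{\gamma \cap U}$. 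Therefore $f$ vanishes on $Z \cap U$, i.e. $f \in \mathcal{I}(U)$. Combining both inclusions, and noting that both constructions are compatible with the restriction maps of $\mathcal{O}$, yields $\mathcal{I} = \ker(\mathcal{O} \to \phi_*\mathcal{O}_D)$ as subsheaves of $\mathcal{O}$.

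The argument is essentially definition-chasing, so there is no serious obstacle; the only place requiring a little care is precisely the identity $Z \cap U = \overline{\gamma \cap U} \cap U$, where openness of $U$ is essential — without it a function vanishing on $\gamma \cap U$ need not vanish on all of $Z \cap U$. I would also note the degenerate case $\phi^{-1}(U) = \varnothing$: then $\gamma \cap U = \varnothing$, hence $Z \cap U = \varnothing$, and both sides equal $\mathcal{O}(U)$, so no separate treatment is needed.
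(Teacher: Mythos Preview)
Your proof is correct and follows exactly the same approach as the paper: both arguments reduce to the observation that a local section $f$ satisfies $f|_Z = 0$ if and only if $f \circ \phi = 0$, which the paper states in one line and you carefully unpack via the identities $\phi(\phi^{-1}(U)) = \gamma \cap U$ and $\overline{\gamma}\cap U = \overline{\gamma \cap U}\cap U$. Your treatment is simply a fully detailed expansion of the paper's terse ``clearly.''
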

\begin{proof}
    Let $f$ be a local section of $\mathcal{O}$. Clearly, $f|_Z = 0$ if and only if $f \circ \phi = 0$. 
\end{proof}

Now if $\phi_*\mathcal{O}_D$ is coherent, then so is $\mathcal{I}$, being the kernel of a morphism of coherent sheaves. Hence, by Cartan's Theorem A, see \ref{thm:cartans}, we have that $\mathcal{I}$ is globally generated. Take $x \in \mathbb{R}^d \backslash D$, which we have assumed exists. Then we claim $\mathcal{I}_x \neq 0$, which by the global generation would imply that there is a non-zero global section of $\mathcal{I}$. But this holds, as one can take an open neighborhood $U$ of $x$ which is disjoint from $Z$, then $1 \in \mathcal{I}(U)$. 

A similar argument to this gives us a nice equivalent condition for the set $Z$ being a set of uniqueness for analytic functions. 
\begin{lemma}\label{lem:SetOfUniquenessIfAndOnlyIf}
    $Z$ is a set of uniqueness for analytic functions if and only if $Z = \mathbb{R}^d$ or $\mathcal{I}$ has no non-zero coherent subsheaf.
\end{lemma}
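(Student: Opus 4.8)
The plan is to reduce both sides of the biconditional to one concrete statement: whether there exists a nonzero real-analytic function on $\mathbb{R}^d$ vanishing on $Z$, which by \Cref{def:Concrete-Def-Uniqueness} is exactly the failure of $Z$ to be a set of uniqueness. First I would dispose of the degenerate case $Z=\mathbb{R}^d$: then $\mathcal{I}=0$, which has no nonzero coherent subsheaf, while $Z$ is vacuously a set of uniqueness, so both sides of the equivalence hold. So from here on fix $Z\neq\mathbb{R}^d$, where the ``$Z=\mathbb{R}^d$ or'' clause drops out, and it remains to show: $Z$ is a set of uniqueness $\iff$ $\mathcal{I}$ has no nonzero coherent subsheaf.

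For ``$\mathcal{I}$ has a nonzero coherent subsheaf $\Rightarrow$ $Z$ is not a set of uniqueness'' I would rerun, essentially verbatim, the argument given immediately before the lemma, but applied to the subsheaf rather than to $\mathcal{I}$ itself. If $\mathcal{J}\subseteq\mathcal{I}$ is coherent and nonzero, then by Cartan's Theorem~A (see \ref{thm:cartans}), valid on $\mathbb{R}^d$ with its sheaf of real-analytic functions, $\mathcal{J}$ is generated by its global sections at every point; hence $\mathcal{J}(\mathbb{R}^d)\neq 0$, for otherwise each stalk $\mathcal{J}_x$ would be generated by the germs of global sections, all of which are $0$, forcing $\mathcal{J}=0$. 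Any nonzero $f\in\mathcal{J}(\mathbb{R}^d)\subseteq\mathcal{I}(\mathbb{R}^d)$ is then a nonzero global real-analytic function vanishing on $Z$, so $Z$ is not a set of uniqueness.

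For the converse, ``$Z$ not a set of uniqueness $\Rightarrow$ $\mathcal{I}$ has a nonzero coherent subsheaf'', I would pick a nonzero $f\in\mathcal{O}(\mathbb{R}^d)$ with $f|_Z\equiv 0$ and form the principal ideal subsheaf it generates, $\mathcal{J}:=\im\bigl(\mathcal{O}\xrightarrow{\,\cdot f\,}\mathcal{O}\bigr)$. This $\mathcal{J}$ is coherent, being the image of a morphism of coherent $\mathcal{O}$-modules (here one uses coherence of the structure sheaf $\mathcal{O}$, the real-analytic Oka coherence theorem). It is nonzero, since $\mathcal{J}_{x_0}=f_{x_0}\mathcal{O}_{x_0}=\mathcal{O}_{x_0}$ at any $x_0$ with $f(x_0)\neq 0$. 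And $\mathcal{J}\subseteq\mathcal{I}$: containment of subsheaves of $\mathcal{O}$ may be checked stalkwise, and $f_x\mathcal{O}_x\subseteq\mathcal{I}_x$ because every germ $g_x f_x$ vanishes on $Z$ near $x$ (as $f$ does). Thus $\mathcal{J}$ is a nonzero coherent subsheaf of $\mathcal{I}$; combining the two implications with the $Z=\mathbb{R}^d$ case gives the stated equivalence.

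The only genuinely nontrivial input — and the step I expect to be the crux — is the appeal to Cartan's Theorem~A on $\mathbb{R}^d$, i.e. that a nonzero coherent analytic sheaf on $\mathbb{R}^d$ always admits a nonzero global section. This is precisely what makes ``possessing a nonzero coherent subsheaf of $\mathcal{I}$'' equivalent to ``possessing a nonzero global section of $\mathcal{I}$'', hence to non-uniqueness; it rests on the Stein-type cohomology vanishing available here because real-analytic manifolds have Stein complexifications (cf. \ref{thm:cartans}). Everything else — coherence of $\mathcal{O}$, coherence of images of morphisms of coherent sheaves, and the stalkwise criterion for inclusion of subsheaves — is standard sheaf-theoretic bookkeeping that can be cited directly.
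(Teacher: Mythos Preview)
Your proposal is correct and follows essentially the same route as the paper: one direction uses Cartan's Theorem~A to force a nonzero coherent subsheaf of $\mathcal{I}$ to have a nonzero global section, and the other direction builds the principal ideal subsheaf $f\mathcal{O}$ from a nonzero global $f\in\mathcal{I}(\mathbb{R}^d)$. The only differences are cosmetic: you separate out the $Z=\mathbb{R}^d$ case up front and spell out why $f\mathcal{O}$ is coherent (via Oka) and contained in $\mathcal{I}$ (stalkwise), whereas the paper leaves these implicit.
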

\begin{proof}
    Suppose first that $Z$ is a set of uniqueness. Assume that $Z \neq \mathbb{R}^d$. Let $\mathcal{F} \subseteq \mathcal{I}$ be coherent. Then by Cartan's Theorem A \Cref{thm:cartans}, $\mathcal{F}$ is globally generated. But $\mathcal{F}(\mathbb{R}^d) \subseteq \mathcal{I}(\mathbb{R}^d) = 0$, hence $\mathcal{F}(\mathbb{R}^d) = 0$ and $\mathcal{F} = 0$. 

    Conversely, if $Z = \mathbb{R}^d$ then it is obviously a set of uniqueness. If $Z \neq \mathbb{R}^d$ was not a set of uniqueness for analytic functions, then let $f \in \mathcal{O}(\mathbb{R}^d)$ be a non-zero global analytic function such that $f|_Z = 0$. But then the sheaf $f\mathcal{O}$ is a non-zero coherent subsheaf of $\mathcal{I}$. 
\end{proof}

\begin{lemma}\label{lem:CoherentIffSubAnalytic}
    The sheaf $\phi_*\mathcal{O}_D$ is coherent if and only if $\gamma$ is subanalytic.
\end{lemma}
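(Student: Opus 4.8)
The plan is to route the equivalence through \emph{semiproperness} of $\phi$ and the local structure theory of subanalytic sets, exploiting that coherence of $\phi_*\mathcal{O}_D$ is a local property on $\mathbb{R}^d$. So I would fix $x\in\mathbb{R}^d$, pass to a small ball $U\ni x$, and record that $\phi^{-1}(U)$ is a disjoint union of open intervals $\bigsqcup_\alpha I_\alpha\subseteq D$, with $(\phi_*\mathcal{O}_D)(U)=\prod_\alpha\mathcal{O}_D(I_\alpha)$ and $\mathcal{O}_{\mathbb{R}^d}(U)$ acting by precomposition with $\phi$. Everything then hinges on how many of the $I_\alpha$ survive as $U$ shrinks to $x$, and on how $\phi$ behaves on those whose closure reaches $\partial D$.

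For ``$\gamma$ subanalytic $\Rightarrow$ coherent'', first I would invoke the local structure of one-dimensional subanalytic sets (curve selection plus local uniformisation, as in Bierstone--Milman): near $x$, $\gamma$ is a finite union of analytic arcs, each the image of a \emph{proper} analytic map from a closed or half-open interval. Pulling these finitely many arcs back along $\phi$ and taking closures yields a compact $L\subseteq D$ with $\phi(L)=\gamma\cap U$, i.e. $\phi$ is semiproper at $x$, and simultaneously decomposes $\phi_*\mathcal{O}_D$ near $x$ as a finite sum of the contributions coming from these arcs. The statement then reduces to the assertion that the direct image of $\mathcal{O}$ along a proper analytic map from a (half-)closed interval is coherent, which is the real-analytic analogue of Grauert's finiteness theorem (concretely, a Weierstrass-preparation argument making each $\mathcal{O}_D(I_\alpha)$ a finitely generated $\mathcal{O}_{\mathbb{R}^d,x}$-module whenever $\phi|_{\overline{I_\alpha}}$ is proper); coherence being local and closed under finite direct sums finishes this direction.

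For the converse I would argue from the support of the quotient. Since $\mathcal{O}$ is coherent (Oka), $\mathcal{I}=\ker(\mathcal{O}\to\phi_*\mathcal{O}_D)$ is coherent, hence so is $\mathcal{O}/\mathcal{I}\hookrightarrow\phi_*\mathcal{O}_D$, and its support is exactly $Z=\overline{\gamma}$; thus $Z$ is an analytic subset of $\mathbb{R}^d$. Feeding this back, coherence also forces $(\phi_*\mathcal{O}_D)_x$ to be a finitely generated $\mathcal{O}_{\mathbb{R}^d,x}$-module, which is incompatible with $\phi^{-1}(U)$ having infinitely many components for arbitrarily small $U$; so for small $U$ there are only finitely many $I_\alpha$, and analyticity of $\overline\gamma$ (which rules out oscillatory accumulation of $\gamma$ at points of $Z$) pins down each $\phi(I_\alpha)$ as a proper analytic image with controlled boundary behaviour. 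Then $\gamma\cap U$ is a finite union of such images, hence subanalytic.

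The hard part is the non-properness of $\phi$: a generic trajectory does not extend across the endpoints of its interval of existence, so neither direction is a bare appeal to a proper-mapping or finiteness theorem. The technical core is precisely the passage between coherence and semiproperness --- obtained going forward from the tameness (finitely many local branches) of subanalytic curves, and going backward from the non-finite-generation obstruction created by infinitely many surviving components of $\phi^{-1}(U)$ together with the analyticity of $\overline\gamma$; alternatively one could cite the general coherence criterion for direct images under semiproper analytic maps and verify semiproperness $\Leftrightarrow$ subanalytic image in this one-dimensional situation.
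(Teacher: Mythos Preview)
Your forward direction has a genuine gap. You claim that the finitely many subanalytic arcs of $\gamma$ near $x$, when ``pulled back along $\phi$ and closed'', yield a compact $L\subseteq D$ and hence semiproperness; but take $\phi(t)=(\cos t,\sin t)$ on $D=\mathbb{R}$: the image is an analytic (hence subanalytic) circle, yet $\phi^{-1}(U)$ has infinitely many components for every small $U$, and one checks directly that $(\phi_*\mathcal{O}_D)(U)$ is not finitely generated over $\mathcal{O}(U)$ (the idempotents of the components are linearly independent modulo $\mathfrak{m}_x$). So the ``pulling back arcs'' step fails, and semiproperness---which \emph{does} hold here, with $L=[0,2\pi]$---is not by itself enough to force coherence of the direct image, contrary to the general criterion you allude to at the end. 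The paper's forward argument makes the same unjustified jump (from ``$\gamma$ has finitely many local components'' to ``$\phi^{-1}(U)$ has finitely many components''), so this direction apparently needs an extra hypothesis such as injectivity of $\phi$, which is exactly what holds for flow trajectories once the periodic case is disposed of separately; your write-up should make that explicit rather than hide it inside an appeal to Bierstone--Milman and Grauert.

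Your backward direction takes a higher road than the paper: you first extract that $\overline{\gamma}$ is analytic (as support of the coherent $\mathcal{O}/\mathcal{I}$) and that $\phi^{-1}(U)$ has finitely many components, then assert that analyticity of $\overline{\gamma}$ ``rules out oscillatory accumulation'' and renders each $\phi(I_\alpha)$ a proper analytic image. That last step is where the work is, and you have not done it: if some $I_\alpha=(T,\infty)$, the map $\phi|_{I_\alpha}$ is non-proper and one still owes an argument that its image is subanalytic. The paper sidesteps this entirely by an elementary trick: it shows $A=\phi(\mathbb{Z}\cap D)$ has no accumulation point (otherwise one can prescribe arbitrary values on the integers by an analytic function, killing finite generation), passes to $U'=U\setminus A$, and observes that $\phi^{-1}(U')\subseteq D\setminus\mathbb{Z}$ so every component is bounded; the same finite-generation obstruction then forces finitely many components, whence $\phi^{-1}(U')$ is relatively compact and subanalyticity at $x$ follows directly from the graph. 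This is both more elementary and actually complete, whereas your route through the analytic structure of $\overline{\gamma}$ would still require a non-trivial tameness argument you have only named.
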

\begin{proof}
    For simplicity, we assume that $D$ is an infinite interval. This is not a major assumption, as if $D$ is compact then $\gamma$ is subanalytic and $\phi_*\mathcal{O}_D$ is coherent. Hence we may assume that $D$ is non-compact, but by choosing an appropriate homeomorphism this can be identified with an infinite interval. 
    
    First assume that $\gamma$ is subanalytic. Then locally, $\gamma$ has finitely many components. Let $U$ be such a neighborhood, then we get that $\phi^{-1}(U)$ consists of finitely many components. Hence $\mathcal{O}_D|_{\phi^{-1}(U)}$ is finitely generated. 

    Now assume that $\phi_*\mathcal{O}_D$ is coherent. Let $\Phi : D \rightarrow \mathbb{R}^d \times \mathbb{R}$ be given by $t \mapsto (\phi(t), t)$. Let $\Gamma$ be the image of this map. Then this image is cut out by the analytic equation $H(x,t) = \phi(t) - x$. Let $x \in \overline{\gamma}$. Let $\pi : \mathbb{R}^d \times \mathbb{R} \rightarrow \mathbb{R}^d$ be the projection. As $\phi_*\mathcal{O}_D$ is coherent, we may find an open neighborhood $U$ around $x$ such that $\phi_*\mathcal{O}_D|_U$ is finitely generated. In particular, for every open $x \in V \subseteq U$, we have that $\phi_*\mathcal{O}_D(V)$ is finitely generated as an $\mathcal{O}_{\mathbb{R}^d}(V)$-module. Let $A = \phi(\mathbb{Z} \cap D)$ be the images of the integers inside of $D$ under the trajectory map, without loss of generality we will also assume that $x \notin A$. If $x \in A$ then we may shift the coordinates on $D$ slightly to ensure this is not the case. Now we claim that $A$ has no accumulation points. Suppose it does, say $y$. Then for any open neighborhood around $y$ there are infinitely many elements of $A$ contained in this open. But then as we may find an analytic function which takes any arbitrary prescribed values on the integers inside of $D$, we see that $\phi_*\mathcal{O}$ could not be finitely-generated on this open.

    Now consider $U' = U \backslash (U \cap A)$. As $A$ has no accumulation point, this is an open set, and by assumption $x \in U'$. Now consider $\pi^{-1}(U')$. If this is relatively compact, then will have that $\gamma$ is subanalytic at $x$ as desired. Note that $\pi^{-1}(U') = \Phi(\gamma^{-1}(U'))$. Hence if we can show that $\gamma^{-1}(U')$ is relatively compact we will be done. Clearly $\gamma^{-1}(U') \subseteq D \backslash \Z$, so the only way that this can be non-relatively compact is for it to have infinitely many components. But if this holds, then again we see that $\phi_*\mathcal{O}_D$ is not coherent, a contradiction. 
\end{proof}

\begin{corollary}\label{cor:SubAnalyticImpliesNotUnique}
    If $\gamma$ is subanalytic then it is not a set of uniqueness.
\end{corollary}
\begin{proof}
    By \Cref{lem:CoherentIffSubAnalytic} $\phi_*\mathcal{O}_D$ is subanalytic. Then $\mathcal{I}$ is coherent. Hence we are done by \Cref{lem:SetOfUniquenessIfAndOnlyIf}.
\end{proof}

Now we investigate how to decide whether the trajectory $\gamma$ is subanalytic. First, assume that $d = 1$. Any analytic hypersurface of $\mathbb{R}$ is a discrete set of points. Therefore there are two cases, depending on whether $\phi$ is constant or not. In the constant case, the image is contained in an analytic hypersurface trivially. In the non-constant case the image $\gamma$ has an accumulation point, implying that it is not in an analytic hypersurface. Thus the case of $d = 1$ is worked out, and we shall from now on assume that $d > 1$. We shall also make the assumption that $\phi$ is not constant, as similarly this will just trivially imply that the image is contained in an analytic hypersurface.

Define the non-subanalytic locus as
\begin{equation}
    N(\gamma) = \{x \in \R^d | \gamma \text{ is not subanalytic at } x\}
\end{equation}

By definition, $\gamma$ is subanalytic at $x \in \R^d$ if there exists a neighborhood $V$ containing $x$ such that $\gamma \cap V$ is the projection of a relatively compact semi-analytic set. Therefore we see immediately that if $x \notin \overline{\gamma}$ then $\gamma$ is subanalytic at $x$. Hence $N(\gamma) \subseteq \overline{\gamma}$. 

We first note that by construction $N(\gamma)$ is exactly the obstruction to the entirety of $\gamma$ being subanalytic, which by \Cref{cor:SubAnalyticImpliesNotUnique} implies that $\gamma$ is not a set of uniqueness. Indeed, subanalyticity requires that $\gamma$ is subanalytic at all points $x \in \mathbb{R}^d$ and if $N(\gamma) = \varnothing$ then this follows by the construction of $N(\gamma)$. 

Consider the graph of the map $\phi$, which is the analytic map $\Phi : D \rightarrow D \times \R^d, t \mapsto (t, \phi(t))$. Let $\Gamma$ be the image. Then $\Gamma$ is analytic, being the vanishing of the analytic function $f(t,x) = \phi(t) - x$. Then $\gamma$ is the projection of $\Gamma$ under $\pi : D \times \R^d \rightarrow \R^d$. Thus if $\Gamma$ is locally relatively compact then $\gamma$ will be subanalytic. This local relative compactness can fail however.

Define the $\epsilon$-periodic points of $\gamma$ as follows.
\begin{equation*}
    P_\epsilon(\gamma) = \{x \in \overline{\gamma} \mid \exists (t_n) \text{ a strictly increasing divergent sequence in } D \text{ s.t. } \lim \gamma(t_n) = x\}
\end{equation*}
Inside of this set, we have the actually periodic points, namely
\begin{equation*}
    P(\gamma) = \{x \in \gamma \mid \forall T \in D. \exists t > T. \gamma(t) = x\}
\end{equation*}
We then define certain subsets of $N(\gamma)$. Let Type $I$ points be $N_I(\gamma) = N(\gamma) \cap P(\gamma)$. Let $N_{II}(\gamma) = N(\gamma) \cap P_\epsilon(\gamma) \backslash N_I$. We show now that these are actually the only two possiblities for points in $N(\gamma)$.

\begin{proposition}
    The non-subanalytic locus consists of type $\mathrm{I}$ and $\mathrm{II}$ points only, i.e. $N(\gamma) = N_\mathrm{I}(\gamma) \cup N_{\mathrm{II}}(\gamma)$. In particular, $N(\gamma) \subseteq P_{\epsilon}(\gamma)$.
\end{proposition}
\begin{proof}
    We let $\pi' : D \times \R^d \rightarrow D$ be the other projection. This defines a bijection between $\Gamma$ and $D$. 

    Suppose that $x \in N$. Choose a sequence of radii $r_n \rightarrow r$. Then we may consider the open balls $B_n = B(x, r_n)$ around $x$. For each of these $\pi^{-1}(B_n) \cap \Gamma$ is not relatively compact, so by definition the closed sets $C_n = \overline{\pi^{-1}(B_n) \cap \Gamma}$ are not compact. Hence we have a descending chain of non-compact closed sets

    \begin{equation}
        C_1 \supset C_2 \supset C_3 \supset \dots
    \end{equation}

    Firstly we claim that for every $i$, $\pi'(C_i)$ is not relatively compact. Suppose that it was, then it would be bounded, say $\pi'(C_i) \subseteq B(0, R) \cap D$ for some $R > 0$. But then $C_i \subseteq (B(0, R) \cap D) \times B(x, 2r_i)$ hence $C_i$ would be relatively compact. So each $\pi'(C_i)$ must be not relatively compact. 

    Let $D_i = \pi'(C_i)$. Choose $t_1 \in D_1$. As $D_2$ is not relatively compact, we may always find $t_2 \in C_2$ with $|t_2| > |t_1| + 1$. We may then continue this process to obtain a sequence of times $(t_n)$ in $U$ which has no convergent subsequence, as all terms are at least distance one from each other. We claim now that $(t_n)$ is the desired sequence. This comes down to showing that $\phi(t_n) \rightarrow x$, as that covers both the type $\mathrm{I}$ and type $\mathrm{II}$ points. But notice that because $\phi = \pi \circ \Phi$, $\Phi(t_n) \in C_n$, and $\pi(C_n) \subseteq \overline{B(x, r_n)}$ we have that $\phi(t_n) \in \overline{B(x, r_n)}$. Hence as $r_n \rightarrow 0$ as $n \rightarrow \infty$, we have that $\phi(t_n) \rightarrow x_n$. 
\end{proof}

Now it is not necessarily true that $P_{\epsilon}(\gamma) \subseteq N(\gamma)$, as $P_{\epsilon}(\gamma)$ only represents the obstruction to the graph projection $\Gamma \rightarrow \gamma$ being locally relatively compact. There may be other projections which are locally relatively compact around points of $P_{\epsilon}(\gamma)$. Indeed, we shall see in the next section that for trajectories arising from flows $N_I(\gamma)$ is always empty, even if $P(\gamma)$ is not empty. 

\subsection{Trajectories from Flows}

We now apply the theory developed above to the specific trajectories we want to consider, namely those arising from flows. We fix a dynamical system, with $F$ an analytic function,
\begin{equation*}
    \frac{\partial u}{\partial t} = F(u)
\end{equation*}
Let $\phi$ be the corresponding flow. We assume that the flow is $+$-complete. 

Now fix an initial point $x_0 \in \mathbb{R}^d$. Then $D = [0, \infty)$ by assumption. Let $\gamma$ be the trajectory starting from $x_0$. This then fits in to the situation of the previous part, though because the trajectory is defined via a flow we shall see that the behavior is more controllable. Indeed we have the following result.

\begin{proposition}\label{prop:OnePeriodicPointImpliesAllPeriodicPoints}
    If $\gamma$ is a trajectory of an analytic flow, then $N_\mathrm{I}(\gamma) = \varnothing$
\end{proposition}
\begin{proof}
    Suppose that $x \in N_\mathrm{I}(\gamma)$. Then by definition, we may find two times $t_0 < t_1$ in $U$ such that $\phi(t_0) = \phi(t_1) = x$. But then as $\phi$ satisfies a given differential equation, we must have that this periodicity is actually global, i.e. that $\phi(t) = \phi(t + t_1 - t_0)$ for any $t \in U$ such that $t + t_1 - t_0 \in U$. But in this case, $\gamma = \phi([t_0, t_1])$. But then we map consider the graph $\Gamma$ restricted to $[t_0, t_1]$ and obtain a relatively compact analytic subset which projects onto $\gamma$. Hence $\gamma$ is sub-analytic and therefore $N_{I}(\gamma) = \varnothing$. 
\end{proof}

\begin{corollary}
    If $\gamma$ is periodic, then it is sub-analytic.
\end{corollary}

Therefore possibility of a system being discoverable from a given trajectory thus comes from the existence of Type $\mathrm{II}$ points only. These points are related to the attractor of that system, as shown in \Cref{thm:attractor} and illustrated in \Cref{ex:NonChaoticDiscoverableSystem}.

Given an attractor $A$ we define the basin of attraction $U_{A}$ as those points which flow into the attractor as time goes to $+\infty$. 

\begin{equation*}
    U_{A} = \{x \in \mathbb{R}^d \mid \lim_{t \rightarrow +\infty} \gamma_x(t) \in A \}
\end{equation*}

Not every trajectory will be contained in a basin of attraction, but for every trajectory we can associate a more general notion of a limit set. 

\begin{definition}
    The $\omega$-limit set of $\gamma$ is defined as
    \begin{equation*}
        \lim_{\omega} \gamma = \bigcap_{s \in [0, +\infty)} \overline{\{\gamma(t) \mid t > s\}}
    \end{equation*}

    If $\gamma \cap \lim_\omega \gamma = \varnothing$ then $\lim_\omega \gamma$ is known as a limit cycle.
\end{definition}

We now give certain standard properties of these $\omega$-limit sets. It is possible for the $\omega$ limit to be empty, but if the trajectory $\gamma$ is bounded then it is necessarily non-empty. 

\begin{proposition}\label{prop:PropertiesOfOmegaLimits}
    \begin{enumerate}
        \item[(i)] If $\gamma$ is periodic then $\lim_\omega \gamma = \gamma$.
        \item[(ii)] $\lim_{\omega} \gamma$ is invariant under the flow.
        \item[(iii)] Either $\lim_{\omega} \gamma \cap \gamma = \varnothing$ or $\overline{\gamma} = \lim_{\omega} \gamma$.
    \end{enumerate}
\end{proposition}
\begin{proof}
    For $(i)$ it is easy to see that by periodicity each set $\{\gamma(t) \mid t > s \}$ is actually equal to $\gamma$. Now we use that because $\gamma$ is periodic, it is the image of $[0, 1]$ (potentially after rescaling), hence is closed. Thus $\lim_\omega \gamma = \gamma$.

    For $(ii)$ fix $x \in \lim_\omega \gamma$. Then there is a sequence $t_n \rightarrow T$ such that $\gamma(t_n) \rightarrow x$. But then we have, for any $t \in \mathbb{R}$, $\phi^t(\gamma(t_n)) = \gamma(t_n + t) \rightarrow \phi^t(x)$. Here if $t < 0$ we take $n$ large enough such that $t_n + t > 0$. This shows that $\lim_{\omega} \gamma$ is both forward and backward invariant under the flow $\phi$.

    For $(iii)$ assume that $\lim_{\omega} \gamma \cap \gamma$ is non-empty. Let $x = \gamma(t)$ be in this intersection. Then by part $(ii)$ as $\lim_{\omega} \gamma$ is invariant under the flow we see that the forward and backward orbit of $x$ is contained in $\lim_{\omega} \gamma$. But this orbit is just $\gamma$. Finally, as $\lim_{\omega} \gamma$ is closed, we have that $\overline{\gamma}$ is contained inside of it. But it is clear from the definition of $\lim_{\omega} \gamma$ we have $\lim_{\omega} \gamma \subseteq \overline{\gamma}$.
\end{proof}

\begin{theorem}
    $P_{\epsilon}(\gamma) = \lim_{\omega} \gamma$.
\end{theorem}
\begin{proof}
    Now let $x \in P_{\epsilon} \gamma$, then by definition there is an increasing diverging sequence $(t_n)$ with $\gamma(t_n) \rightarrow x$. Fix $s \in [0, +\infty)$. Then we may find a subsequence $(t_{n_k})_k$ such that each $t_{n_k} > s$, as otherwise the sequence would be eventually contained in the compact set $[0, s]$ and hence have a convergent subsequence. Now $\gamma(t_{n_k}) \rightarrow x$ and $\gamma(t_{n_k}) \in \{\gamma(t) | t > s\}$ thus $x \in \overline{\{\gamma(t) \mid t > s\}}$. As $s$ was arbitrary, we see that $x \in \lim_{\omega} \gamma$. 

    Conversely suppose that $x \in \lim_{\omega} \gamma$. Then for each $n$ we may find $t_n > n$ such that $\gamma(t_n)$ is in the $\frac{1}{n}$ ball around $x$. Then by taking a subsequence we may assume that the $t_n$ are increasing. Then $\gamma(t_n) \rightarrow x$ and hence $x \in P_{\epsilon}(\gamma)$.
\end{proof}

\begin{corollary}\label{cor:NInsideOfOmegaLimit}
    $N(\gamma) \subseteq \lim_{\omega} \gamma$.
\end{corollary}

There are three qualitatively different behaviors for $\lim_{\omega} \gamma$ and consequently $N(\gamma)$. First, we have the case that $\lim_{\omega} \gamma$ is not a limit cycle. Then the two cases are determined by whether the trajectory $\gamma$ is closed.

\begin{proposition}\label{prop:ClosedTrajectoryWithLimitsIsPeriodic}
    Assume that $\lim_{\omega} \gamma = \overline{\gamma}$. Then if $\gamma$ is closed, $\gamma$ is periodic.
\end{proposition}
\begin{proof}
    By the same argument as \Cref{prop:OnePeriodicPointImpliesAllPeriodicPoints} to show that $\gamma$ is periodic we simply need to a find two times $t_0, t_1$ with $\gamma(t_0) = \gamma(t_1)$. We will show that there must be a time $t_0 > 0$ such that $\gamma(t_0) = x_0$. By assumption $\lim_{\omega} \gamma = \gamma$, hence $x_0 \in \lim_{\omega} \gamma$. This implies that for any $s \in \mathbb{R}_{\geq 0}$, $x_0 \in \overline{\{\gamma(t) \mid t > s\}}$. Hence we may find a strictly increasing sequence $t_n$ such that $\gamma(t_n) \rightarrow x_0$. If the sequence $t_n$ diverges to infinity then we must have $F(\gamma(t_n)) \rightarrow 0$. Hence $F(x_0) = 0$ so $x_0$ is a fixed point, and hence $\gamma$ is trivially periodic. Otherwise $t_n$ converges to some $T$. Then $\gamma(T) = x_0$ as required so $\gamma$ is again periodic.
\end{proof}

Hence we obtain that if $\gamma$ is a closed trajectory then it is either periodic or it diverges to infinity. In both cases we have that the trajectory is subanalytic and hence the system is never discoverable from such trajectories. Now in the case that $\gamma$ is not closed there are two qualitatively different behaviors. If $\lim_{\omega} \gamma$ is a limit cycle, then $\gamma$ limits to this and the discoverability question depends highly on the way that it approaches the limit cycle. Otherwise $\gamma$ is not closed and $\lim_{\omega} \gamma = \overline{\gamma}$, then $\gamma$ acts as if it itself is a limit cycle (though there may not be any trajectories which actually limit to it). 

When the system is chaotic or when there is an attractor, then the final two cases become either a trajectory approaching the attractor and the actual attractor respectively.

\begin{theorem}\label{app:attractor}
    If $\gamma$ is contained in a basin of attraction then $N(\gamma)$ is contained in the intersection of $\overline{\gamma}$ and the corresponding attractor. 
\end{theorem}
\begin{proof}    
    The intersection of $\overline{\gamma}$ and the attractor is exactly $\lim_{\omega} \gamma$, hence this follows immediately from \Cref{cor:NInsideOfOmegaLimit}.
\end{proof}

\begin{lemma}\label{lem:DivergingTrajectoriesNeverDiscoverable}
    A system is never discoverable from any trajectory with $\lim_\omega \gamma \subseteq \gamma$. Here we include the case that $\lim_\omega \gamma$ is empty which is the case of a diverging trajectory.
\end{lemma}
\begin{proof}
    Let $\gamma$ be such a trajectory. Then this implies that $\gamma$ is closed. Indeed, if $x \in \overline{\gamma} \backslash \gamma$ then there is a sequence $(t_n) \in [0, \infty)$ such that $\gamma(t_n) \rightarrow x$. If the sequence was bounded it would have a convergent subsequence, which would then imply that $x \in \gamma$. But if the sequence diverges then one easily sees that $x \in \lim_{\omega} \gamma$. Now by \Cref{prop:PropertiesOfOmegaLimits} we have that $\lim_{\omega} \gamma = \varnothing$ or $\lim_{\omega} \gamma = \gamma$. The first then implies that $N(\gamma) = \varnothing$ and hence $\gamma$ is not a set of uniqueness. The second implies that the trajectory is periodic using \Cref{prop:ClosedTrajectoryWithLimitsIsPeriodic}, hence is also not a set of uniqueness.
\end{proof}

We now introduce the concept of a generalized Poincare section.

\begin{definition}
    Suppose $\gamma$ is a trajectory such that $\gamma \cap \lim_{\omega} \gamma = \varnothing$. A generalized Poincare section is an analytic hypersurface $H = V(g)$ with $g$ a non-zero analytic function, such that $H \cap \lim_{\omega} \gamma \neq \varnothing$ and in some neighborhood of $H \cap \lim_{\omega} \gamma$, $\gamma$ intersects $H$ transversely.
\end{definition}

\begin{proposition}\label{prop:IntersectionWithPoincareSectionsIsDiscrete}
    Let $H$ be a generalized Poincare section. Then $\gamma \cap H$ is a discrete set of points.
\end{proposition}
\begin{proof}
    Suppose it wasn't a discrete set of points, so that there is a sequence of intersection points $p_n$ converging to $p$. Each of these by definition corresponds to a time $t_n$ such that $\gamma(t_n) = p_n$. We cannot have the sequence $t_n$ being unbounded, as otherwise $p \in \lim_{\omega} \gamma$, but we have assumed that $\lim_{\omega} \gamma \cap \gamma = \varnothing$. Therefore the sequence $t_n$ remains bounded, and therefore has a convergent subsequence, converging to a time $t$. By continuity we must have $\gamma(t) = p$. By transversality of the intersection of $\gamma$ with $H$ at $p$, there exists $\epsilon > 0$ such that $\gamma((t-\epsilon, t+\epsilon)) \cap H = \{p\}$. But this contradicts the convergence of the subsequence of $t_n$ to $t$. Hence such a sequence can't exist.
\end{proof}

\begin{proposition}\label{app:GenerlizedPoincareSectionsExist}
    Assume that $\Omega$ is not a point. Then a generalized Poincare section always exists.
\end{proposition}
\begin{proof}
    Choose $p \in \Omega$. As $\Omega$ is not a single point, we must have that $F(p) \neq 0$. Hence by the analytic flow-box theorem, we may find analytic coordinate $(q_1, \dots, q_d)$ around $p$ such that the flow becomes simply translate along $q_1$. We may assume that these coordinates are centered at $p$. Then take $H' = \{q_1 = 0\}$, this is a smooth codimension one hyperplane in the $q$ coordinates, and it is clear that $\gamma$ intersects it transversely. Take $H$ to be the preimage of $H'$ under the coordinate transformation. This perserves smoothness, dimension, and the transversality of intersection. Further as $0 \in H'$, we have $p \in H$. Therefore $H$ is our desired generalized Poincare section.
\end{proof}

\begin{proposition}\label{app:nonsubanal}
    Assume that $\Omega$ is not a point. Then $\gamma$ is not subanalytic
\end{proposition}
\begin{proof}
    Suppose that $\gamma$ was subanalytic. By \Cref{prop:GenerlizedPoincareSectionsExist} we have a generalized Poincare section $H$. Now as $H$ is by definition an analytic hypersurface it is also necessarily subanalytic. Therefore by the theorem of the complement, and the assumption that $\gamma$ is subanalytic, we have that $H \cap \gamma$ is subanalytic. This is a discrete set of points by \Cref{prop:IntersectionWithPoincareSectionsIsDiscrete}. But it also has an accumulation point. Therefore it cannot be subanalytic, a contradiction.
\end{proof}

\section{Computational Details}
\begin{figure}[t]
    \centering
    \begin{subfigure}[b]{0.49\linewidth}
        \centering
        \includegraphics[width=\linewidth]{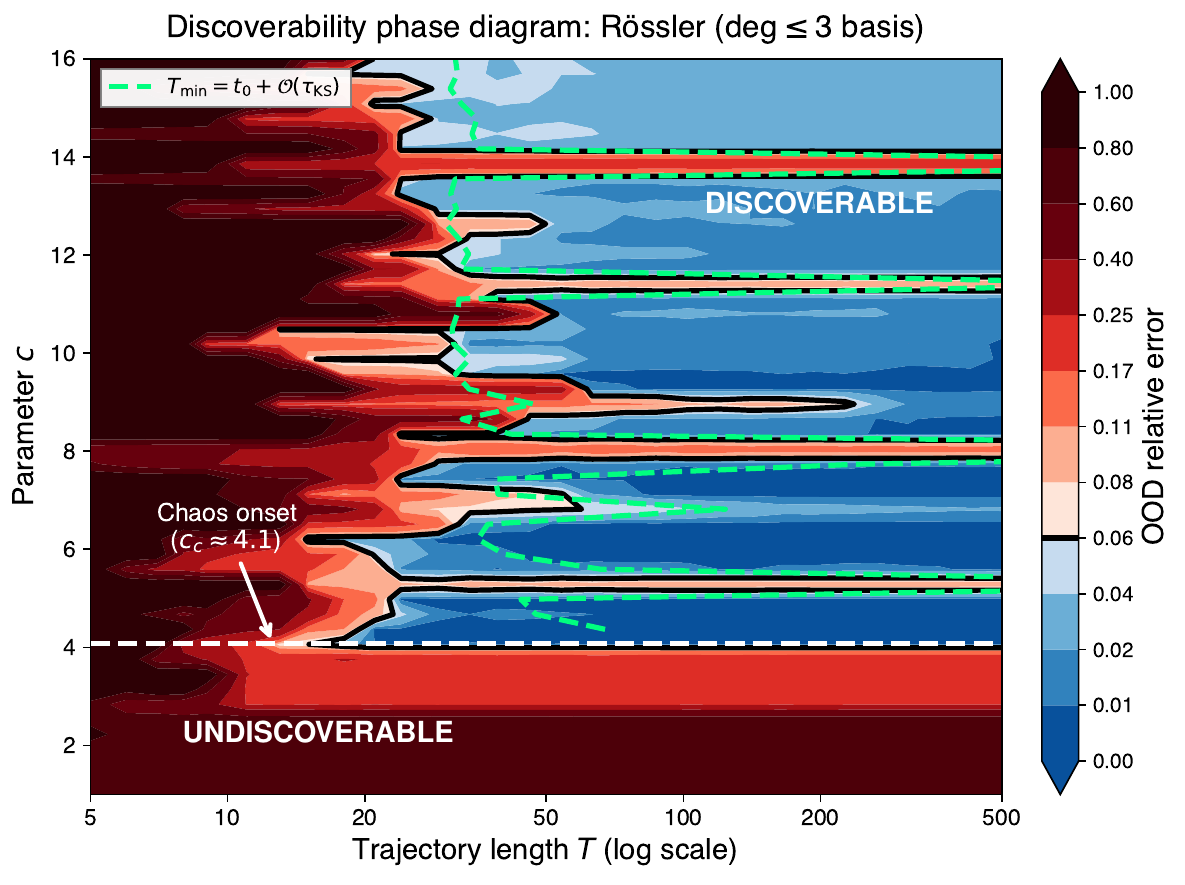}
        \caption{R\"ossler system}
        \label{fig:rossler_ood}
    \end{subfigure}
    \hfill
    \begin{subfigure}[b]{0.49\linewidth}
        \centering
        \includegraphics[width=\linewidth]{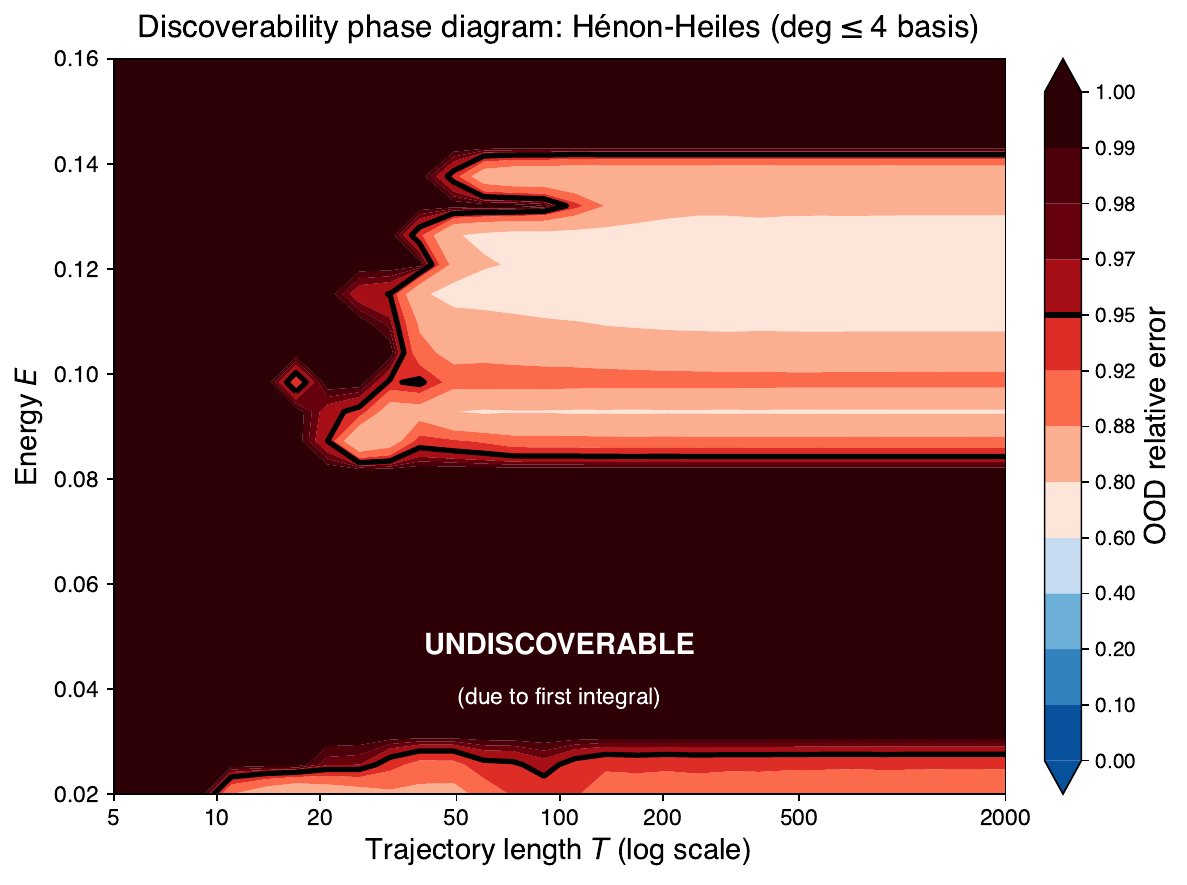}
        \caption{H\'enon-Heiles system}
        \label{fig:henonheiles_ood}
    \end{subfigure}
    \caption{Discoverability phase diagrams for the Rössler and Hénon-Heiles systems. (a) OOD prediction error for the Rössler system as a function of trajectory length $T$ and bifurcation parameter $c$, with chaos onset marked at $c_c$ and the $T_{\min}$ boundary fitted as $T_{\min} = t_0 + \mathcal{O}(\tau_{\mathrm{KS}})$. (b) OOD prediction error for the Hénon-Heiles system as a function of $T$ and energy $E$. The system is structurally undiscoverable across all trajectory lengths due to the conserved first integral $H = E$ by \Cref{thm:first_integral}, which prevents generalization to off-shell test points.}
    \label{fig:rossler_henonheiles_ood}
\end{figure}
\paragraph{Experimental Setup and Computational Details.}
We analyze three systems: (i) Lorenz-96 system; (ii) R\"ossler system; and (iii) H\'enon-Heiles. Across all three systems, the discoverability pipeline is identical. 
For the discoverability phase diagram, we ask: given a trajectory of length $T$, can a model fitted on that trajectory generalize to unseen states? We represent the vector field $\dot{u} = F(u)$ using a \emph{dictionary} of candidate basis functions, and learn the mapping from states to derivatives via ridge regression on the observed trajectory, which we refer to as the \emph{discovered model}: 
$$\hat{\theta} = \arg\min_{\theta} |\Phi\theta - \dot{u}|^2 + \lambda_r |\theta|^2 = (G + \lambda_r I)^{-1} \Phi^\top \dot{u},$$
where $\dot{u} \in \mathbb{R}^{n \times N}$ contains the observed derivatives at each data point and $\Phi \in \mathbb{R}^{n \times d}$ is the design matrix whose rows are the evaluated basis functions at each data point, $G = \Phi^\top \Phi$ is the Gram matrix, $n$ is the number of data points, and $d$ is the number of basis functions. The regularization parameter is set adaptively as $\lambda_r =\alpha \cdot \mathrm{tr}(G)/nd$, for $\alpha$ a system-specific coefficient.

Trajectories of varying lengths $T$ (30 logarithmically spaced values) are generated and OOD generalization is assessed by evaluating the discovered model on $N_\mathrm{test} = 300$ test points sampled uniformly from a bounding box extending $\pm 0.3$ times the per-dimension range of the training trajectory.

The relative prediction error is computed component-wise, normalized by the mean absolute derivative magnitude, and averaged to yield a scalar OOD error. The system is deemed \emph{discoverable} at trajectory length $T$ if the OOD error falls below the threshold $\epsilon_\mathrm{thr}$. The minimum discoverable trajectory length $T_\mathrm{min}$ is empirically fitted as $T_\mathrm{min} = t_0 + \mathcal{O}(\tau_\mathrm{KS})$, where $\tau_\mathrm{KS} = 1/h_\mathrm{KS}$ is the Kolmogorov time, via least squares on the chaotic forcing values.

The Lyapunov spectrum is computed by integrating the variational equations alongside the trajectory with periodic QR decomposition, using a warmup of $T_{\text{warm}}$ time units (discarded) and a measurement period of $T$ time units. The maximum Lyapunov exponent $\lambda_1$ and KS entropy $h_\mathrm{KS} = \sum_{\lambda_i > 0} \lambda_i$ characterize the chaotic regime, with chaos onset identified via a near-zero threshold on $\lambda_1$. All trajectories are generated with a fourth-order Runge-Kutta integrator, except where noted.

For the model ambiguity analysis, we quantify uncertainty in the discovered model by examining the posterior covariance of the ridge regression coefficients, which is proportional to $\Sigma = G_\mathrm{reg}^{-1} = (G + \lambda_r I)^{-1}$. Rather than scaling by a noise variance estimate, we directly report $\mathrm{Tr}(\Sigma)/nd$ as a normalized measure of the average spread of the posterior over parameters, capturing how geometrically constrained the regression problem is at each forcing value. A large $\mathrm{Tr}(\Sigma)/nd$ indicates that many directions in parameter space remain poorly determined by the data — i.e., the model is \emph{ambiguous}, even with the longest available trajectory.

\paragraph{Lorenz-96.}
The system being analyzed is $$\dot{u}_i = (u_{i+1} - u_{i-2})u_{i-1} - u_i + F, \qquad i = 1, \ldots, N \pmod{N},$$
with $N = 40$ and periodic boundary conditions.
The forcing $F \in [3, 8]$ (26 uniformly spaced values), cubic monomial basis ($\leq 3$), $T \in [100, 5000]$ (30 log-spaced), $\Delta t = 0.05$, chaos onset at $F_c \approx 4.2$ ($\lambda_1 > 0.005$), $\epsilon_\mathrm{thr} = 0.10$, $\alpha = 10^{-6}$. 
The initial condition $x_0 = F \cdot {r}$ where ${r} \sim \mathrm{Uniform}([0.9, 1.1])^{40}$ independently per component (seed $42 + f_i$), i.e. each component is initialised near the constant solution $u_i = F$ and a warmup $T_{\text{warm}}=5000$. The minimum discoverable trajectory length is fitted as $T_\mathrm{min} = t_0 + \mathcal{O}(\tau_\mathrm{KS})$ with $t_0 = 431.1$ and scaling $781.5$.

\paragraph{R\"ossler.} The system being analyzed is 
$$\dot{x}_0 = -x_1 - x_2, \qquad \dot{x}_1 = x_0 + a x_1, \qquad \dot{x}_2 = b + x_2(x_0 - c)$$
with fixed $a = 0.2$, $b = 0.2$, and $c$ as the bifurcation parameter.
$N = 3$ variables $(x_0, x_1, x_2)$, fixed $a = 0.2$, $b = 0.2$, sweep $c \in [1, 16]$ (50 uniformly spaced values), cubic monomial basis ($\leq 3$), $T \in [5, 500]$, $\Delta t = 0.005$, chaos onset at $c_c$ ($\lambda_1 > 0.002$), $\epsilon_\mathrm{thr} = 0.06$, $\alpha = 10^{-6}$. The initial condition is fixed at $x_0 = (0,, -6,, 0.02)$ for all values of $c$, with a warmup $T_{\text{warm}}=500$. The minimum discoverable trajectory length is fitted as $T_\mathrm{min} = t_0 + \mathcal{O}(\tau_\mathrm{KS})$ with $t_0 = 11.0$ and scaling $2.3$.

\paragraph{H\'enon-Heiles.} The system being analyzed is 
$$\dot{q}_1 = p_1, \qquad \dot{q}_2 = p_2, \qquad \dot{p}_1 = -q_1 - 2q_1 q_2, \qquad \dot{p}_2 = -q_2 - q_1^2 + q_2^2$$
$N = 4$ variables $(q_1, q_2, p_1, p_2)$, Hamiltonian $H = \tfrac{1}{2}(p_1^2+p_2^2) + \tfrac{1}{2}(q_1^2+q_2^2) + q_1^2 q_2 - \tfrac{1}{3}q_2^3 = E$, energy swept over $E \in [0.02, 0.16]$ (26 uniformly spaced values, strictly below the escape energy $E_\mathrm{esc} = 1/6$), quartic monomial basis ($\leq 4$), $T \in [5, 2000]$, $\Delta t = 0.005$. A single fixed initial condition is used for each $E$: $q_1 = 0.1$, $q_2 = 0$, $p_1 = 0$, and $p_2$ is solved from $H = E$:
$$p_2 = \sqrt{2E - q_1^2 - q_2^2 - 2q_1^2 q_2 + \tfrac{2}{3}q_2^3} = \sqrt{2E - 0.01}$$
(since $q_2 = 0$ simplifies the potential terms), with a warmup $T_{\text{warm}}=500$. Trajectories are integrated with a Störmer–Verlet symplectic integrator (conserving $H$ to $\mathcal{O}(\Delta t^2)$); Lyapunov variational equations use RK4. $\epsilon_\mathrm{thr} = 0.95$, $\alpha = 10^{-3}$. Because $H = E$ is a conserved first integral, the vector field on the energy hypersurface is not uniquely identifiable from data in ambient $\mathbb{R}^4$: OOD test points drawn from the ambient bounding box lie generically off the energy shell, so the OOD error never falls below $\approx 0.9$ regardless of trajectory length. The system is therefore structurally undiscoverable in this formulation, and no $T_\mathrm{min}$ fit is reported.

\end{document}